\documentclass[10pt]{amsproc}
\usepackage[utf8]{inputenc}
\usepackage[dvipsnames]{xcolor}
\usepackage{todonotes}

\usepackage{etoolbox}

\makeatletter
\patchcmd{\@thm}{\let\thm@indent\indent}{\let\thm@indent\noindent}{}{}
\patchcmd{\@thm}{\thm@headfont{\scshape}}{\thm@headfont{\bfseries}}{}{}


\usepackage[T1]{fontenc}

\usepackage{mathtools}

\usepackage{multirow}
\usepackage{longtable}

\usepackage{amsmath}
\usepackage{amsfonts}
\usepackage{amssymb}
\usepackage{float}
\usepackage{amsthm}
\usepackage{comment}
\usepackage{amscd}
\usepackage{amsxtra}
\usepackage{epsfig}
\usepackage{epigraph}
\usepackage{pgfplots}
\usepackage{enumitem}


\usepackage{stmaryrd}

\usepackage{listings}
\usepackage{color}

\definecolor{dkgreen}{rgb}{0,0.6,0}
\definecolor{gray}{rgb}{0.5,0.5,0.5}
\definecolor{mauve}{rgb}{0.58,0,0.82}

\lstset{frame=tb,
  language=Java,
  aboveskip=3mm,
  belowskip=3mm,
  showstringspaces=false,
  columns=flexible,
  basicstyle={\small\ttfamily},
  numbers=none,
  numberstyle=\tiny\color{gray},
  keywordstyle=\color{blue},
  commentstyle=\color{dkgreen},
  stringstyle=\color{mauve},
  breaklines=true,
  breakatwhitespace=true,
  tabsize=3
}

\usepackage{color}
\usepackage[all]{xy}
\usepackage{verbatim}

\usepackage{eucal}
\usepackage{mathrsfs}

\usepackage{graphicx}
\usepackage{tikz-cd}

\usepackage{url}
\usepackage{verbatim}

\usepackage{xy}
\xyoption{all}
\newcommand{\xar}[1]{\xrightarrow{{#1}}}

\usepackage{amsthm}
\usepackage{hhline}

\usepackage{caption} 
\captionsetup[table]{skip=-10pt}

\setcounter{tocdepth}{3}

\makeatletter
\def\@tocline#1#2#3#4#5#6#7{\relax
  \ifnum #1>\c@tocdepth 
  \else
    \par \addpenalty\@secpenalty\addvspace{#2}%
    \begingroup \hyphenpenalty\@M
    \@ifempty{#4}{%
      \@tempdima\csname r@tocindent\number#1\endcsname\relax
    }{%
      \@tempdima#4\relax
    }%
    \parindent\z@ \leftskip#3\relax \advance\leftskip\@tempdima\relax
    \rightskip\@pnumwidth plus4em \parfillskip-\@pnumwidth
    #5\leavevmode\hskip-\@tempdima
      \ifcase #1
       \or\or \hskip 1em \or \hskip 2em \else \hskip 3em \fi%
      #6\nobreak\relax
    \hfill\hbox to\@pnumwidth{\@tocpagenum{#7}}\par
    \nobreak
    \endgroup
  \fi}
\makeatother

\usepackage[noabbrev,capitalize]{cleveref} 


\crefformat{nul}{(#2#1#3)}
\Crefformat{nul}{(#2#1#3)}

\crefname{section}{\S}{\S\S}
\crefname{subsection}{\S}{\S\S}
\crefname{axioms}{Axiom}{Axioms}
\crefname{exercise}{Exercise}{Exercises}
\crefname{exercisenum}{Exercise}{Exercises}
\crefname{construction}{Construction}{Constructions}
\crefname{problem}{Problem}{Problems}
\crefname{theorem}{Theorem}{Theorems}
\crefname{definition}{Definition}{Definitions}
\crefname{prop}{Proposition}{Propositions}
\crefname{lemma}{Lemma}{Lemmas}
\crefname{example}{Example}{Examples}
\crefname{examplealph}{Example}{Examples}
\crefname{corollary}{Corollary}{Corollaries}
\crefname{nonexample}{Nonexample}{Nonexamples}
\crefname{equation}{}{}
\crefname{summary}{Summary}{Summaries}
\crefname{recollection}{Recollection}{Recollections}
\Crefname{recollection}{Recollection}{Recollections}
\Crefname{nonexample}{Nonexample}{Nonexamples}
\Crefname{corollary}{Corollary}{Corollaries}
\Crefname{corollary}{Corollary}{Corollaries}
\Crefname{axioms}{Axiom}{Axioms}
\Crefname{exercise}{Exercise}{Exercises}
\Crefname{exercisenum}{Exercise}{Exercises}
\Crefname{construction}{Construction}{Constructions}
\Crefname{problem}{Problem}{Problems}
\Crefname{theorem}{Theorem}{Theorems}
\Crefname{definition}{Definition}{Definitions}
\Crefname{prop}{Proposition}{Propositions}
\Crefname{lemma}{Lemma}{Lemmas}
\Crefname{example}{Example}{Examples}
\Crefname{examplealph}{Example}{Examples}
\Crefname{section}{\S}{\S\S}
\Crefname{subsection}{\S}{\S\S}



\DeclareMathOperator{\Hom}{Hom}

\DeclareMathOperator{\spec}{Spec}
\DeclareMathOperator{\spf}{Spf}

\DeclareMathOperator{\coker}{coker}

\newtheorem{lemma}{Lemma}[subsection]

\newtheorem{corollary}[lemma]{Corollary}

\newtheorem{theorem}[lemma]{Theorem}

\newtheorem*{defno}{Definition}

\newtheorem{prop}[lemma]{Proposition}
\newtheorem{conjecture}[lemma]{Conjecture}

\newtheorem{abc}{abc}
\newtheorem{ABCthm}[abc]{Theorem}

\newtheorem*{conj-moore}{Conjecture~\ref{moore-splitting}}

\theoremstyle{definition}

\newtheorem{question}[lemma]{Question}

\newtheorem{definition}[lemma]{Definition}
\newtheorem{warning}[lemma]{Warning}
\newtheorem{example}[lemma]{Example}

\newtheorem{remark}[lemma]{Remark}
\newtheorem{notation}[lemma]{Notation}

\newtheorem{recall}[lemma]{Recollection}

\newtheorem{observe}[lemma]{Observation}

\renewcommand{\AA}{\mathbf{A}}
\newcommand{\FF}{\mathbf{F}}
\newcommand{\Z}{\mathbf{Z}}
\newcommand{\QQ}{\mathbf{Q}}
\newcommand{\cc}{\mathbf{C}}

\newcommand{\cd}{\mathscr{D}}

\newcommand{\co}{\mathscr{O}}
\newcommand{\ce}{\mathscr{E}}

\newcommand{\GG}{\mathbf{G}}

\newcommand{\cL}{\mathscr{L}}

\newcommand{\Eoo}{{\mathbf{E}_\infty}}

\newcommand{\CAlg}{\mathrm{CAlg}}

\newcommand{\univ}{\mathrm{univ}}

\newcommand{\ev}{\mathrm{ev}}

\newcommand{\ul}[1]{\underline{#1}}

\newcommand{\id}{\mathrm{id}}

\newcommand{\KU}{\mathrm{KU}}

\newcommand{\ku}{\mathrm{ku}}

\renewcommand{\H}{\mathrm{H}}


\newcommand{\F}{\mathrm{F}}

\newcommand{\HC}{\mathrm{HC}}
\newcommand{\HP}{\mathrm{HP}}

\newcommand{\Sch}{\mathrm{Sch}}
\newcommand{\op}{\mathrm{op}}

\newcommand{\dR}{\mathrm{dR}}

\newcommand{\gr}{\mathrm{gr}}

\newcommand{\can}{\mathrm{can}}

\renewcommand{\log}{\mathrm{log}}

\renewcommand{\S}{Section }

\makeatletter
\providecommand{\leftsquigarrow}{%
  \mathrel{\mathpalette\reflect@squig\relax}%
}
\newcommand{\reflect@squig}[2]{%
  \reflectbox{$\m@th#1\rightsquigarrow$}%
}
\makeatother

\newcommand{\lcm}{\mathrm{lcm}}

\newcommand{\bull}{\bullet}

\newcommand{\pw}[1]{[\![#1]\!]}

\renewcommand{\tilde}{\widetilde}

\renewcommand{\min}{\mathrm{min}}

\newcommand{\Gr}{\mathrm{Gr}}

\newcommand{\rot}{\mathrm{rot}}

\usepackage{slashed}

\usepackage{relsize}
\usepackage[bbgreekl]{mathbbol}
\usepackage{amsfonts}
\DeclareSymbolFontAlphabet{\mathbb}{AMSb} 
\DeclareSymbolFontAlphabet{\mathbbl}{bbold}

\newcommand{\ptl}{{\tilde{p}}}

\newcommand{\pdb}[1]{\langle{#1}\rangle}

\newcommand{\sdr}[1]{s\Omega_{\square,#1}}
\newcommand{\fdr}[1]{F\Omega_{\square,#1}}
\newcommand{\Gammaf}[1]{\Gamma_{F\text{-}\mathrm{dR}}({#1})}
\newcommand{\Poly}{\mathrm{Poly}}
\newcommand{\fdiff}[1]{F\cd_{\square,#1}}

\newcommand{\plog}{\mathrm{Li}}

\newcommand{\ps}[1]{[\![#1]\!]}
\newcommand{\ab}[1]{\langle #1 \rangle}

\mathchardef\mhyphen="2D

\newcommand{\combbinom}[4]{\genfrac{(}{|}{0pt}{}{#1}{#2}\!\!\genfrac{|}{)}{0pt}{}{#3}{#4}}

\newcommand{\bigmid}{\;\middle|\;}

\title{Generalized $n$-series and de Rham complexes}
\author{S. K. Devalapurkar and M. L. Misterka}
\thanks{Part of this work was done when the first author was supported by the PD Soros Fellowship, Inpher, and NSF DGE-2140743}

\begin{document}

\maketitle

\begin{abstract}
    The goal of this article is to study some basic algebraic and combinatorial properties of ``generalized $n$-series'' over a commutative ring $R$, which are functions $s: \Z_{\geq 0} \to R$ satisfying a mild condition. A special example of generalized $n$-series is given by the $q$-integers $\frac{q^n-1}{q-1} \in \Z\pw{q-1}$.
    Given a generalized $n$-series $s$, one can define $s$-analogues of factorials (via $n!_s = \prod_{i=1}^n s(n)$) and binomial coefficients. We prove that Pascal's identity, the binomial identity, Lucas' theorem, and the Vandermonde identity admit $s$-analogues; each of these specialize to their appropriate $q$-analogue in the case of the $q$-integer generalized $n$-series.
    We also study the growth rates of generalized $n$-series defined over the integers.
    Finally, we define an $s$-analogue of the ($q$-)derivative, and prove $s$-analogues of the Poincar\'e lemma and the Cartier isomorphism for the affine line, as well as a pullback square due to Bhatt-Lurie.
\end{abstract}

\tableofcontents

\newpage
\section{Introduction}

\subsection{Summary}

Recent work of Bhatt, Drinfeld, Lurie, Morrow, Scholze, and others (see, e.g., \cite{bms-i, bhatt-scholze, scholze-q-def, drinfeld-formal-group, drinfeld-prism, apc}) has shown that $q$-deformations of classical number-theoretic and algebro-geometric concepts play a central role in arithmetic geometry. 
The basic premise behind the theory of $q$-deformations is the idea that the $q$-integers $[n]_q = \frac{q^n - 1}{q-1}$ display many similarities to the ordinary integers. This idea has a rich history\footnote{Any source recounting the history of $q$-deformations would be most welcome to the authors!}: the basic ideas date back at least to Euler (e.g., \cite{original-euler}) and Jacobi and the study of basic hypergeometric series. A $q$-analogue of the derivative originates with Jackson in 1909 (see \cite{original-jackson}). We refer the reader to the book \cite{quantum_calculus} for an exposition of $q$-deformed calculus.

The theory of \textit{formal group laws} supplies a simultaneous generalization of both ordinary integers and $q$-integers (see \cref{recall: fgl-recall} for a quick summary of the basics of formal group laws, and \cite{hazewinkel, green} for a detailed treatment). Namely, every formal group law $F$ over a ring $R$ defines a sequence of power series $\pdb{n}$ over $R$ for every integer $n\in \Z$. In the case of the the additive formal group law $x + y$, we have $\pdb{n} = n$; and in the case of the multiplicative formal group law $x + y + xy$, one can identify $\pdb{n} = [n]_q$. The goal of this article is to explore whether certain aspects of $q$-deformed mathematics (such as $q$-analogues of basic combinatorial formulae, and properties of the $q$-de Rham complex of \cite{scholze-q-def}) admit generalizations to arbitrary formal group laws. One of the primary motivations behind our investigation is the unpublished observation of Arpon Raksit that homotopy-theoretic methods naturally suggest studying ``$F$-analogues'' of the $q$-de Rham complexes arising in the aforementioned work of Bhatt-Morrow-Scholze, as well as the calculation of \cite[Section 3.3]{coh-gr}.\footnote{Since the actual homotopy theory does not play any role in this paper, we refer the interested reader to \cref{rmk: homotopy theory} below for more.}

Our primary observation is that one does not need the structure of a formal group law to define and study these ``$F$-analogues''. Instead, the following significantly weaker structure suffices:
\begin{defno}[\cref{def: generalized n-series}]
Fix a ring $R$ (always assumed commutative with unit). A \textit{generalized $n$-series (GNS) over $R$} is a function $s : \Z_{\ge 0} \to R$ such that:
\begin{enumerate}
    \item $s(0) = 0$,
    \item $s(n)$ is not a zero-divisor for any $n > 0$,
    \item $s(n - k) \mid s(n) - s(k)$ for all $n > k > 0$.
\end{enumerate}
\end{defno}
For instance, the map $s: \Z_{\geq 0} \to \Z\pw{q-1}$ sending $n\mapsto [n]_q = \frac{q^n-1}{q-1}$ defines a GNS.

In the body of this article, we show that this simple definition is sufficient for proving several analogues of classical combinatorial identities, and is also enough to study an ``$s$-deformation'' of the classical algebraic de Rham complex. The results of this article do not rely on any sophisticated tools: rather, the purpose is to demonstrate the efficiency of \cref{def: generalized n-series}. The work done in this article seems closely related to Bhargava's \cite{bhargava-factorial-AMM}, but we have not attempted to make a comparison.

If $n\geq 0$, let $n!_s = \prod_{k=1}^n s(k)$, and let $\binom{n}{j}_s = \frac{n!_s}{j!_s (n-j)!_s}$ denote the $s$-analogues of the factorial and binomial coefficient, respectively. Our main combinatorial results are the following; for the full statement of some of these results, we refer the reader to the body of the text.
\begin{ABCthm}
Fix a GNS $s$ over $R$.
The following hold:
\begin{enumerate}
    \item Pascal's identity (\cref{thm: s-Pascal identity}):
    $$\binom{n}{k}_s = \binom{n - 1}{k - 1}_s + \frac{s(n) - s(k)}{s(n - k)} \binom{n - 1}{k}_s.$$
    \item An $s$-analogue of the binomial and $q$-binomial theorems; see \cref{thm: s-binomial theorem}.
    \item Lucas' theorem (\cref{thm: s-Lucas theorem}): suppose that $s(1) = 1$ and
    $$s(a + b) \equiv s(a) + s(b) \pmod{s(a) s(b)}$$
    for all $a, b \in \Z_{> 0}$. Then, for any prime $p$ and any nonnegative integers $n_1, n_0, k_1, k_0$ such that $n_0, k_0 < p$, we have
    $$\binom{n_1 p + n_0}{k_1 p + k_0}_s \equiv \binom{n_1}{k_1} \binom{n_0}{k_0}_s \pmod{s(p)}.$$
    \item An analogue of the Vandermonde and $q$-Vandermonde identities; see \cref{thm: s-Vandermonde identity}.
\end{enumerate}
\end{ABCthm}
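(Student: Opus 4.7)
The plan is to prove the four parts in cascade, with Pascal's identity serving as the engine for the rest. For Pascal, I would first observe that the identity makes sense as an equation in the total ring of fractions of $R$: since each $s(k)$ is a non-zero-divisor, so is each $k!_s$, and the quotients $\binom{n}{k}_s$ live in the localization. Directly from the definition one has $\binom{n-1}{k-1}_s = \tfrac{s(k)}{s(n)}\binom{n}{k}_s$ and $\binom{n-1}{k}_s = \tfrac{s(n-k)}{s(n)}\binom{n}{k}_s$, so plugging into the proposed right-hand side collapses $\tfrac{s(n)-s(k)}{s(n-k)} \cdot \tfrac{s(n-k)}{s(n)}$ and produces $\tfrac{s(k) + (s(n)-s(k))}{s(n)}\binom{n}{k}_s = \binom{n}{k}_s$. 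Simultaneously, I would prove by induction on $n$ that $\binom{n}{k}_s$ actually lies in $R$ (not merely in a localization): the inductive hypothesis puts $\binom{n-1}{k-1}_s$ and $\binom{n-1}{k}_s$ in $R$, axiom (3) of a GNS puts $\tfrac{s(n)-s(k)}{s(n-k)}$ in $R$, and Pascal then forces $\binom{n}{k}_s \in R$.

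For the $s$-analogue of the binomial theorem, I expect the statement to express a suitable $s$-deformation of the product $(x+y)^n$ as $\sum_k \binom{n}{k}_s$ times a monomial (perhaps in a non-commutative setting, as in the $q$-binomial theorem). The natural proof is induction on $n$, with Pascal's identity driving the inductive step; the coefficient $\tfrac{s(n)-s(k)}{s(n-k)}$ is exactly what records the ``$s$-commutation rule'' between factors. Vandermonde can then be obtained in two ways: either by induction on $m$ using Pascal to split $\binom{m+n}{k}_s$, or, more cleanly, by expanding both sides of an identity of the form $(x+y)^{m+n} = (x+y)^m(x+y)^n$ via the $s$-binomial theorem and comparing coefficients; the latter is my preferred route.

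The main obstacle will be Lucas' theorem, which is the only one requiring the extra hypothesis $s(1)=1$ and $s(a+b)\equiv s(a)+s(b)\pmod{s(a)s(b)}$. My plan there is to first establish the ``$p$-primary vanishing'' $\binom{p}{k}_s \equiv 0 \pmod{s(p)}$ for $0 < k < p$, which is the linchpin of every classical Lucas-type theorem. A natural path is to rewrite $s(k)\binom{p}{k}_s = s(p)\binom{p-1}{k-1}_s$ (an immediate rearrangement of the recursion above) and then use the congruence hypothesis, combined with the primality of $p$ and $s(1)=1$, to conclude that $s(k)$ is a unit modulo $s(p)$ for $0 < k < p$. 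Once this is in hand, the full theorem should follow by peeling off the ``$p$-adic digits'' $n_0$ and $k_0$: I would apply Pascal (or the $s$-binomial theorem applied to a split $(x+y)^{n_1 p}(x+y)^{n_0}$) and argue that every cross term is annihilated by $s(p)$, leaving only the product $\binom{n_1}{k_1}\binom{n_0}{k_0}_s$. The delicate accounting will be in showing that the coefficients produced by the $s$-binomial expansion of the ``$p$-block'' reduce modulo $s(p)$ to ordinary integer binomial coefficients $\binom{n_1}{k_1}$; this is precisely where the hypothesis $s(a+b)\equiv s(a)+s(b)\pmod{s(a)s(b)}$ pays for itself, by forcing the $s$-deformed arithmetic on the $p$-blocks to degenerate to ordinary integer arithmetic modulo $s(p)$.
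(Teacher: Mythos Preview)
Your proof of Pascal's identity (part 1) is essentially the paper's, and your instinct that the congruence hypothesis makes the $s$-arithmetic on $p$-blocks ``degenerate to ordinary integer arithmetic modulo $s(p)$'' is exactly right: this is the content of the paper's lemma that $\binom{n}{k}_{s_p} \equiv \binom{n}{k} \pmod{s(p)}$ for the rescaled GNS $s_p(n) = s(pn)$.

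The gap is in your preferred route to Vandermonde and Lucas via a factorization of the form $(x+y)^{m+n}_s = (x+y)^m_s\,(x+y)^n_s$. No such identity holds for general $s$: in the paper the symbol $(x+y)^n_s$ is defined implicitly by the conditions $\nabla_{s,x}(x+y)^n_s = s(n)(x+y)^{n-1}_s$ and a normalization at $y=-x$, and it is \emph{not} a product of linear factors (even in the $q$-case multiplicativity only holds after a shift $y\mapsto q^m y$). Consequently the $s$-Vandermonde identity does not take the clean form $\binom{m+n}{k}_s = \sum_j \binom{m}{k-j}_s\binom{n}{j}_s$ you implicitly assume; the paper must introduce an auxiliary quantity $\combbinom{n}{j}{m}{k}_s$ (a weighted sum over $j$-element subsets of an interval, with weights built from the $c_s$ coefficients) and proves $\binom{m+n}{k}_s = \sum_j \binom{m}{k-j}_s \combbinom{n}{j}{m}{k}_s$ by induction on $n$, applying Pascal on both sides. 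Your induction-via-Pascal backup plan is therefore the correct one, but you would discover that the extra symbol is forced on you.

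For Lucas the paper avoids the $s$-binomial theorem entirely. It first handles the case $n_0=k_0=0$ by writing $\binom{pn}{pk}_s$ as a ratio of products and cancelling $s(pn-j)$ against $s(pk-j)$ modulo $s(p)$ for each $j$ with $p\nmid j$ (both are congruent units, by the same unit argument you identified), leaving $\binom{n}{k}_{s_p}$. The general case is then reduced to this one by two short inductions using the elementary relations $s(n-k)\binom{n}{k}_s = s(n)\binom{n-1}{k}_s$ and $s(k)\binom{n}{k}_s = s(n-k+1)\binom{n}{k-1}_s$. Your observation that $\binom{p}{k}_s \equiv 0 \pmod{s(p)}$ for $0<k<p$ is correct and follows from the same unit lemma, but it is not the organizing principle the paper uses; the ``generating-function'' proof you sketch would require a multiplicativity that simply is not available for general $s$.
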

In \cref{sec: integer generalized n-series}, we study the growth rate of generalized $n$-series over $\Z$. For instance, we show in \cref{thm: lower bound on GNS over Z} that if $s(n)$ is a strictly increasing generalized $n$-series over $\Z$ which is not a scalar multiple of $n \mapsto [n]_q$ for any $q \in \Z_{> 0}$, then $s(n) = \Omega_a(a^n)$ for all $a \ge 0$.

As one might expect given our motivation above, one important class of examples of generalized $n$-series arises via formal group laws. Recall (see \cref{recall: fgl-recall}) that a formal group law over a commutative ring $R$ is a two-variable power series $x +_F y\in R\pw{x,y}$ such that $(x+_F y)+_F z = x+_F (y+_F z)$ and $x+_F y \equiv x + y \pmod{(x,y)^2}$. If $n\geq 0$ is an integer, the $n$-series of $F$ is defined via the formula
$$[n]_F(t) = \overbrace{t +_F t +_F \cdots +_F t}^n\in tR\pw{t}.$$
Let $\pdb{n}_F = \frac{[n]_F(t)}{t}$. Suppose (for simplicity) that $R$ is torsionfree. Then, the function $\Z_{\geq 0} \to R\pw{t}$ sending $n\mapsto \pdb{n}_F$ defines a GNS over $R\pw{t}$ (\cref{prop: [n]_F(t) GNS}).

One can define the \textit{$F$-de Rham complex} of the affine line $\AA^1 = \spec R[x]$ as the cochain complex
$$\fdr{\AA^1} = \left(R\pw{t}[x] \xar{\nabla_F} R\pw{t}[x] dx\right), \ x^n \mapsto \pdb{n}_F x^{n-1} dx.$$
This was first defined by Arpon Raksit in unpublished work.
Many analytic properties of the usual ($q$\nobreakdash-)derivative continue to hold for the $F$-derivative: for instance, we show (see \cref{cor: F-log}) that there is an explicit power series $F\log(x)$ which recovers the $q$-logarithm when $F$ is the $q$-integer GNS, and which satisfies the property that $\nabla_F(F\log(x)) = 1/x$.

Our main results regarding the $F$-de Rham complex can be summarized as follows:
\begin{ABCthm}[\cref{thm: omnibus-fgl} and \cref{thm: BL-analogue}]
Let $R$ be a torsionfree (say) commutative ring, and let $F$ be a formal group law over $R$.
\begin{enumerate}
    \item Let $R\pw{t}\pdb{x}_F$ denote the ring $R\pw{t}[x, \frac{x^n}{[n]_F!}]_{n\geq 0}$. Then the Poincar\'e lemma holds: the cohomology of the complex $\fdr{\AA^1} \otimes_{R\pw{t}[x]} R\pw{t}\pdb{x}_F$ is concentrated in degree zero, where it is isomorphic to $R\pw{t}$.
    \item The Cartier isomorphism holds: after setting $\pdb{p}_F = 0$, the $i$th cohomology of the complex $\fdr{\AA^1}$ is isomorphic to the $i$th term of a Frobenius twist of $\fdr{\AA^1}$.
    \item There is an analogue of the d\'ecalage isomorphism of \cite{berthelot-ogus, bhatt-scholze} for $\fdr{\AA^1}$.
    \item Using the aforementioned $F$-analogue $F\log(x)$ of the $q$-logarithm, we prove a generalization of the Cartesian square of \cite[Lemma 3.5.18]{apc}.
\end{enumerate}
\end{ABCthm}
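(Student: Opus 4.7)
My plan is to treat the four assertions by direct generalization of the classical de Rham arguments, substituting $F$-analogue identities for their ordinary counterparts; the main structural input throughout is that $s(n) = \pdb{n}_F$ is a GNS, so the divisibility relations underlying the combinatorial identities established in Theorem A remain available.

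For the Poincar\'e lemma (1), I would produce an explicit contracting homotopy. After adjoining the divided powers $\frac{x^n}{[n]_F!}$, the operator $\nabla_F$ is simply the shift $\frac{x^n}{[n]_F!} \mapsto \frac{x^{n-1}}{[n-1]_F!}\,dx$, so the map $I$ going the other way by $\frac{x^{n-1}}{[n-1]_F!}\,dx \mapsto \frac{x^n}{[n]_F!}$ satisfies $\nabla_F \circ I = \id$. This forces surjectivity of $\nabla_F$, while its kernel is visibly the constants $R\pw{t}$ by inspecting coefficients.

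For the Cartier isomorphism (2), the key input is the classical formal-group-law fact that $\pdb{p}_F \mid \pdb{pn}_F$, with the quotient a unit multiple of $\pdb{n}_{F^{(p)}}$ for the Frobenius-twisted law $F^{(p)}$ (extractable from the $p$-typicalization machinery in \cite{hazewinkel}). Modulo $\pdb{p}_F$, this pins down the kernel of $\nabla_F$ (supported on monomials $x^{pn}$) and identifies its image, and matching the remaining generators against the twisted complex yields the Cartier identification. For the d\'ecalage (3), the plan is to place the $\pdb{p}_F$-adic filtration on $\fdr{\AA^1}$ and to compute directly, via the same twist relation, that the associated graded agrees with the Berthelot-Ogus construction applied to the Frobenius-twisted complex. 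For the Bhatt-Lurie square (4), I would write down the candidate pullback using $F\log(x)$ as the explicit element whose $F$-derivative is $1/x$, and verify the square on each localization by comparing power series coefficients term by term, exactly as in \cite[Lemma 3.5.18]{apc}.

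The main obstacles I expect are in parts (2) and (4). For (2), the Frobenius-twist relation $\pdb{pn}_F \sim \pdb{p}_F \cdot \pdb{n}_{F^{(p)}}$ must be formulated with enough precision in the current generality, and its extra unit factors tracked carefully; once in hand, the rest of the cohomology computation is mechanical. For (4), the delicate step is identifying the target of the glueing square and checking that $F\log(x)$ satisfies the correct cocycle condition on the overlap, which will likely require an explicit computation with the divided-power expansion of $F\log$ together with the Vandermonde-type identity from Theorem A.
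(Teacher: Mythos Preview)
Your treatment of part (1) is correct and matches the paper exactly: the shift identity $\nabla_F\bigl(\tfrac{x^n}{n!_F}\bigr) = \tfrac{x^{n-1}}{(n-1)!_F}$ immediately gives surjectivity and the kernel.

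For parts (2) and (3), you are making things harder than necessary and partly misidentifying the statements. The ``Frobenius twist'' in the Cartier isomorphism is \emph{not} a twisted formal group law $F^{(p)}$; it is simply $\AA^{1,(p)} = \spec R\pw{t}[x^p]$, with the same $F$. You therefore do not need the relation $\pdb{pn}_F \sim \pdb{p}_F\cdot \pdb{n}_{F^{(p)}}$ or any $p$-typicalization machinery. The only input the paper uses is the elementary GNS fact (\cref{lem: s of a unit mod n is a unit mod s(n)}) that $\pdb{n}_F$ is a unit in $R\pw{t}/\pdb{p}_F$ whenever $p\nmid n$; this alone pins down both $\H^0$ and $\H^1$ of $\fdr{\AA^1}/\pdb{p}_F$ by inspection of monomials. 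For d\'ecalage, the paper does not pass through an associated-graded computation: it writes down the $R$-algebra endomorphism $\varphi\colon t\mapsto [p]_F(t)$ (multiplication by $p$ on $\hat\GG$), which by construction sends $\pdb{n}_F\mapsto \pdb{np}_F/\pdb{p}_F$, and then compares $\varphi^\ast\fdr{\AA^{1,(p)}}$ and $\eta_{\pdb{p}_F}\fdr{\AA^1}$ directly on cohomology. Your plan omits this $\varphi$, which is the essential structural ingredient.

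Part (4) is where your proposal has a genuine gap. The Bhatt--Lurie square here is a diagram of \emph{group schemes} over $R\pw{t}$, and the nontrivial corner is the Cartier dual $\hat\GG_t^\vee$ of the rescaled formal group; neither a ``localization'' check nor the $s$-Vandermonde identity plays any role. The paper's argument runs as follows: over $(R\otimes\QQ)\pw{t}$ one has the pairing $\mu(x,y)=\exp(\tilde\ell_F(x)\,y)$ whose $x$-coefficients $\beta_n(y)$ span $\co_{\hat\GG_t^\vee}$; the key computation (\cref{prop: michael's result}) is the closed form
\[
\nabla_{F,y}^n\,\mu(x,F\log(y)) \;=\; \frac{x(x+_{\tilde F}\pdb{-1}_F)\cdots(x+_{\tilde F}\pdb{-n+1}_F)}{y^n}\,\mu(x,F\log(y)),
\]
which, via the $F$-Taylor expansion (\cref{lem: F-taylor}), expresses each $\beta_n(F\log(y))$ as an $R\pw{t}$-linear combination of the $F$-divided powers $\tfrac{(y-1)^n_s}{n!_F}$. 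This is what shows $F\log$ factors through $\hat\GG_t^\vee$; commutativity of the square then reduces to the identity $\exp\bigl(p\tfrac{\ell_F(t)}{t}\cdot F\log(y)\bigr)=y^p$, and the kernel is identified with $\mu_p$ by inspection. Your proposal does not locate this Cartier-dual description or the iterated-derivative computation, and without them there is no mechanism to even formulate the square, let alone prove it Cartesian.
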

Except for the final part, the above result in fact admits a generalization to arbitary GNS (not just ones which arise from formal group laws), but the statement is slightly more complicated; see \cref{sec: s-Cartier}.
As with the combinatorial results above, \cref{thm: omnibus-fgl} is not technically involved; however, it is supposed to serve as a blueprint for a more general program that we outline at the end of \cref{sec: formal group law n-series and the s-derivative}. In particular, we state the (almost certainly false) \cref{conj: polynomial maps}, stating that the assignment $R[x_1, \ldots, x_n] \mapsto (\fdr{\AA^1})^{\otimes_{R\pw{t}} n}$ should extend to a functor from the category of commutative $R$-algebras to the $\infty$-category of $\Eoo$-$R\pw{t}$-algebras. 

\subsection{Table of commonly-used notation}

This article will introduce some notation which will be used heavily throughout. For the reader's convenience, we have summarized the commonly-used ones in the table below.
\vspace{.5cm}
\begin{center}
\begin{tabular}{ c|c|c } 
\hline
Symbol & Definition & Location in text \\
\hhline{=|=|=}
$R[1/s]$ & $R[s(1)^{-1}, s(2)^{-1}, \cdots]$ & \cref{eq: r-localization} \\
\hline
$c_s(n,k)$ & $\frac{s(n) - s(k)}{s(n - k)}$ & \cref{eq: csnk-definition} \\ 
\hline
$(x+y)^n_s$ & Characterized by specific conditions & \cref{def: (x + y)^n_s} \\
\hline
$C_s(n,k)$ & $\frac{s(n + k) - s(n) - s(k)}{s(n) s(k)}$ & \cref{def: capital-CS-ab} \\ 
\hline
$\combbinom{n}{j}{m}{k}_s$ & $\sum_{m < i_1 < i_2 < \cdots < i_j \le m + n} \left( \prod_{\ell = 1}^j c_s(i_\ell, i_\ell - k + j - \ell)\right)$ & \cref{def: s-combbinom} \\
\hline
$\ab{n}_F(t)$ & $\frac{[n]_F(t)}{t}$ for a FGL $F(x,y)$ & \cref{recall: fgl-recall} \\
\hline
$\ell_F(t)$, $\ce_F(t)$ & Logarithm and exponential of a FGL & \cref{recall: fgl-recall} \\
\hline
$F\log(x)$ & $\frac{t}{\ell_F(t)} \log(x)$ & \cref{cor: F-log} \\
\hline
$\GG_m^{\sharp,F}$ & ``$F$-divided power hull'' of zero section of $\GG_m$ & \cref{def: Gm-sharp F} \\
\hline
\end{tabular}
\end{center}
\vspace{.5cm}

\subsection{Acknowledgements}

The first author is grateful to Ben Antieau, Dimitar Jetchev, and (especially) Arpon Raksit for discussions on this topic, as well as Michael Kural for his help in proving the main result of \cref{sec: bhatt-lurie-analogue} in the case of the multiplicative formal group law. The second author is grateful to Pavel Etingof for suggesting that an $s$-analogue of the $q$-binomial theorem might exist.
Finally, both authors are grateful to the directors and organizers of PRIMES-USA for the opportunity to collaborate, as well as for comments on this article!

\newpage
\section{The $s$-Binomial Coefficients}
\label{sec: the s-binomial coefficients}

\subsection{A generalization of binomial coefficients}
\label{sec: a generalization of binomial coefficients}

Recall that the binomial coefficient $\binom{n}{k}$ is defined by \[\binom{n}{k} = \frac{n!}{k! (n - k)!}.\] The $q$-factorial and $q$-binomial coefficients are defined by \[[n]!_q = [1]_q \cdot [2]_q \cdot \cdots \cdot [n]_q, \qquad \binom{n}{k}_q = \frac{[n]!_q}{[k]!_q [n - k]!_q},\] where $[n]_q = (q^n - 1) / (q - 1) \in \Z\pw{q-1}$. The similarity of these two definitions hints that it might be interesting to study a simultaneous generalization of the usual and $q$-binomial coefficients, where the sequence of elements $n\in \Z$ and $[n]_q\in \Z\pw{q-1}$ are replaced by a sequence of elements in a commutative ring satisfying certain conditions.
\begin{definition}
\label{def: s-binomial coefficient}
Let $R$ be a ring, and let $s : \Z_{\ge 0} \to R$ be a function such that $s(0) = 0$ and for all $n > 0$, $s(n)$ is not a zero-divisor. For integers $n \ge k \ge 0$, we define the \textit{$s$-factorial} $n!_s$ by \[0!_s = 1, \quad n!_s = \prod_{k = 1}^n s(k),\] and the \textit{$s$-binomial coefficient} $\binom{n}{k}_s$ by \[\binom{n}{k}_s = \frac{n!_s}{k!_s (n - k)!_s}.\]
\end{definition}
\begin{remark}
In general, this quotient is undefined in $R$; however, it is always defined in the localization
\begin{equation}\label{eq: r-localization}
    R[1/s] := R[s(1)^{-1}, s(2)^{-1}, s(3)^{-1}, \dots].
\end{equation}
We will soon restrict to the case where the $s$-binomial coefficients are elements of $R$.
\end{remark}
The $s$-binomial coefficient need not satisfy any nice properties, since there are no restrictions placed on $s$. Our first observation is the following.
\begin{prop}
\label{thm: s-Pascal identity}
    Let $R$ be a ring and let $s : \Z_{\ge 0} \to R$ be a function that satisfies the following conditions:
    \begin{enumerate}
        \item $s(0) = 0$,
        \item $s(n)$ is not a zero-divisor for any $n > 0$,
        \item $s(n - k) \mid s(n) - s(k)$ for all $n > k > 0$.
    \end{enumerate}
    Then, for all integers $n \ge k \ge 0$, the $s$-binomial coefficient $\binom{n}{k}_s$ is an element of $R$, and the $s$-binomial coefficients satisfy an ``$s$-Pascal identity'': For all $n > k > 0$, \[\binom{n}{k}_s = \binom{n - 1}{k - 1}_s + \frac{s(n) - s(k)}{s(n - k)} \binom{n - 1}{k}_s.\]
\end{prop}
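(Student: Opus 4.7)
The plan is to prove both assertions of the proposition—membership $\binom{n}{k}_s \in R$ and the $s$-Pascal identity—by a single induction on $n$. Throughout, I would work inside the localization $R[1/s]$ of \cref{eq: r-localization}. Because each $s(k)$ with $k \ge 1$ is a non-zero-divisor by hypothesis (2), the natural map $R \hookrightarrow R[1/s]$ is injective, so exhibiting an element of $R[1/s]$ in the image of $R$ is the same as showing it lies in $R$.

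First I would verify the identity as a purely formal manipulation in $R[1/s]$. Rewriting $\binom{n-1}{k-1}_s = \tfrac{(n-1)!_s \, s(k)}{k!_s (n-k)!_s}$ and $\binom{n-1}{k}_s = \tfrac{(n-1)!_s \, s(n-k)}{k!_s (n-k)!_s}$, the prefactor $\tfrac{s(n)-s(k)}{s(n-k)}$ cancels the $s(n-k)$ in the second term, and after pulling out the common factor $\tfrac{(n-1)!_s}{k!_s (n-k)!_s}$ the remaining bracketed scalar collapses to $s(k) + (s(n)-s(k)) = s(n)$, yielding $\binom{n}{k}_s$. Thus the identity holds automatically at the level of $R[1/s]$; the real content of the proposition is that every term actually lives in $R$.

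For integrality, I would induct on $n$. The base cases $n \in \{0,1\}$ and the boundary cases $k \in \{0,n\}$ give $\binom{n}{k}_s = 1 \in R$ directly. For $0 < k < n$, hypothesis (3) produces an element $c \in R$ with $c \cdot s(n-k) = s(n) - s(k)$; this $c$ is unique by hypothesis (2), and under the injection $R \hookrightarrow R[1/s]$ it is exactly $\tfrac{s(n)-s(k)}{s(n-k)}$. The identity established above then realizes $\binom{n}{k}_s$ as $\binom{n-1}{k-1}_s + c \cdot \binom{n-1}{k}_s$, which lies in $R$ by the inductive hypothesis. There is no substantive obstacle here: the only structural insight is that hypothesis (3) is precisely what is needed to promote the ``correction factor'' in the Pascal-type recursion from $R[1/s]$ back to $R$, so that the classical argument for integrality of ordinary and $q$-binomial coefficients carries over verbatim.
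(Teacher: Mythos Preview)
Your proof is correct and follows essentially the same approach as the paper: first establish the identity as a formal manipulation in $R[1/s]$, then deduce integrality by induction on $n$ using condition (3) to ensure the correction factor lies in $R$. The only minor difference is organizational---the paper's algebraic check expands $\frac{s(n)-s(k)}{s(n-k)}\binom{n-1}{k}_s$ directly rather than first putting both binomial coefficients over the common denominator $k!_s(n-k)!_s$---but the substance is identical.
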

\begin{proof}
Indeed, observe that in the localization $R[1/s]$, we have:
\begin{align*}
    \frac{s(n) - s(k)}{s(n - k)} \binom{n - 1}{k}_s & = \frac{s(n)-s(k)}{s(n-k)} \frac{(n - 1)!_s}{k!_s (n - k-1)!_s}\\
    & = \frac{s(n) \cdot (n - 1)!_s}{k!_s (n - k)!_s} - \frac{s(k) \cdot (n - 1)!_s}{k!_s (n - k)!_s} \\
    & = \frac{n!_s}{k!_s (n - k)!_s} - \frac{(n - 1)!_s}{(k-1)!_s (n - k)!_s} \\
    & = \binom{n}{k}_s - \binom{n-1}{k-1}_s.
\end{align*}
It remains to show that $\binom{n}{k}_s \in R$ for all $n \ge k \ge 0$. We will use induction on $n$.

The base case is clear, since $\binom{0}{0}_s = 1 \in R$. 
For the inductive step, assume that for some fixed $n$ and for all $k$ with $n - 1 \ge k \ge 0$, $\binom{n - 1}{k}_s \in R$. Let $k$ be an integer such that $n \ge k \ge 0$. If $k = 0$, then $\binom{n}{k}_s = 1 \in R$. Otherwise, we can apply the $s$-Pascal identity: \[\binom{n}{k}_s = \binom{n - 1}{k - 1}_s + \frac{s(n) - s(k)}{s(n - k)} \binom{n - 1}{k}_s.\] By the inductive hypothesis, the two $s$-binomial coefficients on the right-hand side are in $R$, and by condition (c) in the theorem statement, $\frac{s(n) - s(k)}{s(n - k)} \in R$. Therefore, $\binom{n}{k}_s \in R$. This completes the induction proof.
\end{proof}
Motivated by \cref{thm: s-Pascal identity}, we are led to the following:
\begin{definition}
\label{def: generalized n-series}
    Let $R$ be a ring. A \textit{generalized $n$-series (GNS) over $R$} is a function $s : \Z_{\ge 0} \to R$ such that the following conditions are true:
    \begin{enumerate}
        \item $s(0) = 0$,
        \item $s(n)$ is not a zero-divisor for any $n > 0$,
        \item $s(n - k) \mid s(n) - s(k)$ for all $n > k > 0$.
    \end{enumerate}
If $s$ is a generalized $n$-series, we will define
\begin{equation}\label{eq: csnk-definition}
    c_s(n,k) := \frac{s(n)-s(k)}{s(n-k)}.
\end{equation}
\end{definition}
\begin{example}[Integers]\label{ex: integer}
The inclusion $s: \Z_{\geq 0} \to \Z$ is clearly a GNS over $\Z$.
\end{example}
\begin{example}[$q$-integers]\label{ex: q-integer}
Consider the function $s: \Z_{\geq 0} \to \Z\pw{q-1}$ given by $s(n) = [n]_q$. This defines a GNS: the first two conditions are satisfied, since $[0]_q = 0$, $[n]_q \ne 0$ for $n > 0$, and $\Z\pw{q-1}$ is an integral domain. For the third condition, note that
\begin{align*}
[n]_q - [k]_q
&= \frac{q^n - 1}{q - 1} - \frac{q^k - 1}{q - 1} = \frac{q^n - q^k}{q - 1} \\
& = q^k \left(\frac{q^{n - k} - 1}{q - 1}\right) = q^k [n - k]_q.
\end{align*}
Therefore, $s$ is a GNS over $\Z\pw{q-1}$, and we can apply \cref{thm: s-Pascal identity} to conclude that $\binom{n}{k}_q \in \Z\pw{q-1}$ for all $n \ge k \ge 0$. The $s$-Pascal identity reduces to the well-known $q$-Pascal identity: \[\binom{n}{k}_q = \binom{n - 1}{k - 1}_q + q^k \binom{n - 1}{k}_q.\]
\end{example}
\begin{remark}\label{rem: arbitrary-integer-binom}
One can extend the definition of the $s$-binomial coefficients to allow arbitrary integers $k$ by defining $\binom{n}{k}_s = 0$ when $k < 0$ or $k > n$. Using this extended definition, the $s$-Pascal identity remains true when $k = 0$: \[\binom{n - 1}{-1}_s + \frac{s(n) - s(0)}{s(n - 0)} \binom{n - 1}{0}_s = 0 + 1 \cdot 1 = 1 = \binom{n}{0}_s.\] This relies on the condition $s(0) = 0$. The fact that Pascal's identity fails for $k = 0$ if $s(0) \ne 0$ is one motivation for including condition (1) in the definition of GNS. 
\end{remark}
\subsection{Number-theoretic properties of generalized $n$-series}
\label{sec: number-theoretical properties of GNS}

In this section, we prove some number-theoretic properties of generalized $n$-series, which will be useful later in this article. The main result of this section is the following:
\begin{theorem}
\label{thm: number-theoretical properties of generalized n-series}
    Let $s$ be a generalized $n$-series over a ring $R$. Then, for all $a, b, n \in \Z_{\ge 0}$,
    \begin{enumerate}
        \item $a \mid b \implies s(a) \mid s(b)$,
        \item $a \equiv b \pmod{n} \implies s(a) \equiv s(b) \pmod{s(n)}$,
        \item the ideals $(s(a), s(b))$ and $(s(\gcd(a, b)))$ are equal.
    \end{enumerate}
    If $s(1)$ is a unit in $R$, then for all $a, n \in \Z_{\ge 0}$,
    \begin{enumerate}[resume]
        \item $a \text{ is a unit in } \Z/n \implies s(a) \text{ is a unit in } R/s(n)$.
    \end{enumerate}
\end{theorem}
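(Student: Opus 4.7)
The plan is to prove the four claims sequentially, as each relies on the previous; the only tool available is condition (3) in the definition of a GNS, and the overall strategy is to mimic elementary number theory (induction on the quotient, the Euclidean algorithm, and Bezout's identity).

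For (1), I would write $b = ma$ and induct on $m \geq 0$. The cases $m = 0, 1$ are immediate. For the inductive step, I apply condition (3) to the pair $(ma, (m-1)a)$, which yields $s(a) = s\bigl(ma - (m-1)a\bigr) \mid s(ma) - s((m-1)a)$; combined with the inductive hypothesis $s(a) \mid s((m-1)a)$, this gives $s(a) \mid s(ma)$. For (2), assume WLOG $a \geq b$ and write $a - b = kn$. If $b = 0$ the statement reduces to $s(n) \mid s(a)$, which is (1). If $b > 0$ and $a > b$, condition (3) gives $s(a - b) \mid s(a) - s(b)$, and (1) applied to $n \mid kn$ gives $s(n) \mid s(a-b)$, so $s(n) \mid s(a) - s(b)$.

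Part (3) is the key step and is where I expect the main work to live. The containment $(s(a), s(b)) \subseteq (s(\gcd(a,b)))$ is immediate from (1), so the interesting direction is the reverse, which requires a Bezout-like argument. I would strong-induct on $a + b$ following the Euclidean algorithm: assume WLOG $a \geq b > 0$ and write $a = qb + r$ with $0 \leq r < b$. Part (2) yields $s(a) \equiv s(r) \pmod{s(b)}$, hence $s(r) \in (s(a), s(b))$ and therefore $(s(b), s(r)) \subseteq (s(a), s(b))$. The inductive hypothesis applied to the pair $(b, r)$ gives $(s(\gcd(b,r))) \subseteq (s(b), s(r))$, and since $\gcd(b, r) = \gcd(a, b)$ this closes the induction. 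The base case $r = 0$ gives $\gcd(a, b) = b$ and is trivial.

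Finally, (4) follows formally from (3): $a$ being a unit in $\Z/n$ means $\gcd(a, n) = 1$, so (3) together with the hypothesis $s(1) \in R^\times$ gives $(s(a), s(n)) = (s(1)) = R$, so $s(a)$ and $s(n)$ generate the unit ideal, which is exactly the assertion that $s(a)$ is a unit in $R/s(n)$. The main substantive obstacle is the Euclidean-algorithm induction in (3); everything else is either a direct application of condition (3) of a GNS (for (1) and (2)) or a formal consequence of the ideal identity in (3) (for (4)).
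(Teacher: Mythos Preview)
Your proof is correct. Parts (1) and (2) match the paper's arguments exactly. For (3) and (4) you take a slightly different route: the paper proves (3) in one step by invoking B\'ezout's identity (writing $d = am + bn$, reducing modulo $a$, and using (2) to get $s(d) \equiv s(bn) \pmod{s(a)}$, hence $s(d) \in (s(a), s(b))$), whereas you run the Euclidean algorithm inductively. Your approach has the minor advantage of never needing to worry about sign choices in B\'ezout (the paper implicitly needs $bn \geq 0$ since $s$ is only defined on $\Z_{\geq 0}$). For (4), the paper argues directly from (1) and (2) by taking $b$ with $ab \equiv 1 \pmod{n}$ and observing $s(a) \mid s(ab) \equiv s(1) \pmod{s(n)}$, so it proves (4) before (3); your derivation of (4) from (3) is equally valid and arguably cleaner. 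Both routes are standard elementary number theory and amount to the same content.
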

\begin{remark}
In the case $R = \Z$, the equivalence of ideals in Theorem \ref{thm: number-theoretical properties of generalized n-series} is equivalent to \[\gcd(s(a), s(b)) = \pm s(\gcd(a, b)).\]
\end{remark}
We will prove \cref{thm: number-theoretical properties of generalized n-series} as a sequence of lemmas. Fix a generalized $n$-series $s$ over a ring $R$.

\begin{lemma}
\label{lem: divisibility of generalized n-series}
Let $a, b \in \Z_{\ge 0}$. Then, $a \mid b$ implies $s(a) \mid s(b)$.
\end{lemma}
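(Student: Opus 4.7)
The plan is to prove \cref{lem: divisibility of generalized n-series} by induction on the quotient $k = b/a$. Write $b = ka$ with $k \in \Z_{\geq 0}$; the goal is to show $s(a) \mid s(ka)$ for every $k$.

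The base cases $k = 0$ and $k = 1$ are immediate: when $k = 0$ we have $s(b) = s(0) = 0$, which is divisible by everything; and when $k = 1$ we have $s(b) = s(a)$. For the inductive step, I would use condition (3) of the definition of a generalized $n$-series. Specifically, assuming $a > 0$ and $k \geq 2$, apply condition (3) with the pair $(n, j) = (ka, (k-1)a)$, noting that $ka > (k-1)a > 0$. This gives
\[
s(ka - (k-1)a) = s(a) \,\bigm|\, s(ka) - s((k-1)a),
\]
so $s(ka) \equiv s((k-1)a) \pmod{s(a)}$. By the inductive hypothesis $s(a) \mid s((k-1)a)$, so $s(a) \mid s(ka)$, completing the induction.

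The one edge case to address separately is $a = 0$: then $a \mid b$ forces $b = 0$, and the conclusion $s(0) \mid s(0)$ is trivial. There is no real obstacle here; the argument is essentially a one-line telescoping modulo $s(a)$, since each consecutive difference $s(ja) - s((j-1)a)$ is divisible by $s(a)$ by condition (3). The only thing to be careful about is that condition (3) requires strict inequalities $n > k > 0$, which is why the cases $k = 0, 1$ must be peeled off as base cases rather than folded into the induction.
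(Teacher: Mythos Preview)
Your proof is correct and follows essentially the same approach as the paper: induction on the quotient $b/a$, using condition (3) of the GNS definition at each step. The only cosmetic difference is that the paper applies condition (3) to the pair $(ka, a)$ (yielding $s((k-1)a) \mid s(ka) - s(a)$) rather than your pair $(ka, (k-1)a)$; your version is arguably cleaner, and you are more careful than the paper about the edge cases $a = 0$ and $k \le 1$.
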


\begin{proof}
We will use induction. Base case: $s(a) \mid s(0)$ because $s(0) = 0$. Inductive hypothesis: Let $n \in \Z_{> 0}$, and assume that $s(a) \mid s(a(n-1))$. Then, by the divisibility condition in the definition of generalized $n$-series, \[s(a) \mid s(a(n-1)) = s(an - a) \mid s(an) - s(a),\] so $s(a) \mid s(an)$.
\end{proof}

\begin{lemma}
\label{lem: congruence of generalized n-series}
Let $a, b, n \in \Z_{\ge 0}$. If $a \equiv b \pmod n$ then \[s(a) \equiv s(b) \pmod{s(n)}.\] Another way to state this lemma is that $s$ induces a well-defined function from $\Z/n$ to $R/s(n)$.
\end{lemma}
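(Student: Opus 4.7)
The plan is to reduce the statement $s(a) \equiv s(b) \pmod{s(n)}$ to a direct combination of axiom~(3) of a GNS and the preceding \cref{lem: divisibility of generalized n-series}. By symmetry of the conclusion (if $s(n)$ divides $s(a)-s(b)$ then it divides $s(b)-s(a)$), I may assume without loss of generality that $a \geq b$. I would then split into cases according to whether the trivial boundary values $n=0$, $a=b$, or $b=0$ occur, handling them first so that the main case reduces to the situation $a > b > 0$ and $n > 0$.

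The boundary cases are essentially immediate. If $n = 0$, the hypothesis $a \equiv b \pmod 0$ forces $a = b$, and then $s(a) - s(b) = 0$ is divisible by $s(n)$ vacuously (using $s(0) = 0$). If $a = b$, the same trivial argument applies. If $b = 0$ and $a > 0$, the congruence $a \equiv 0 \pmod n$ means $n \mid a$, so \cref{lem: divisibility of generalized n-series} gives $s(n) \mid s(a)$, which equals $s(a) - s(b)$ since $s(0) = 0$.

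The central case is $a > b > 0$ and $n > 0$. Here axiom~(3) of \cref{def: generalized n-series} applied to the pair $a > b$ yields
\[
s(a - b) \bigm| s(a) - s(b).
\]
On the other hand, $a \equiv b \pmod n$ together with $a > b$ gives $n \mid (a - b)$, and so \cref{lem: divisibility of generalized n-series} provides $s(n) \mid s(a - b)$. Chaining these two divisibilities yields $s(n) \mid s(a) - s(b)$, which is exactly the claim.

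The main potential obstacle would be whether the divisibility $s(n - k) \mid s(n) - s(k)$ in axiom~(3) requires strict inequality $n > k > 0$, which is why the boundary cases $b = 0$ and $a = b$ need to be peeled off first; but once this bookkeeping is done, the argument is just a two-step chain of divisibilities. I do not anticipate any further difficulty.
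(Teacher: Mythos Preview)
Your proof is correct and follows essentially the same approach as the paper: both use \cref{lem: divisibility of generalized n-series} to get $s(n)\mid s(a-b)$ and axiom~(3) to get $s(a-b)\mid s(a)-s(b)$, then chain. You are simply more careful than the paper about the boundary cases where axiom~(3) does not literally apply.
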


\begin{proof}
By the definition of congruence, $n \mid a - b$, so $s(n) \mid s(a - b)$ by Lemma \ref{lem: divisibility of generalized n-series}. The definition of generalized $n$-series requires that \[s(a - b) \mid s(a) - s(b),\] so $s(n) \mid s(a) - s(b)$, which means that $s(a) \equiv s(b) \pmod{s(n)}$.
\end{proof}

\begin{lemma}
\label{lem: s of a unit mod n is a unit mod s(n)}
Suppose that $s(1)$ is a unit in $R$. If $a, n \in \Z_{\ge 0}$ such that $a$ is a unit in $\Z/n$, then $s(a)$ is a unit in $R/s(n)$.
\end{lemma}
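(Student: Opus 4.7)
The plan is to chain together Lemmas \ref{lem: divisibility of generalized n-series} and \ref{lem: congruence of generalized n-series}, using the hypothesis that $s(1)$ is a unit to convert a congruence into a unit statement. Since $a$ is a unit in $\Z/n$, I can choose some $b \in \Z_{\ge 0}$ with $ab \equiv 1 \pmod{n}$. Then Lemma \ref{lem: congruence of generalized n-series} gives
\[
s(ab) \equiv s(1) \pmod{s(n)}.
\]

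Next I would observe that $a \mid ab$, so by Lemma \ref{lem: divisibility of generalized n-series}, $s(a) \mid s(ab)$ in $R$. Since $s(a)$ is not a zero-divisor, there is a well-defined element $c \in R$ with $s(ab) = s(a) \cdot c$. Substituting into the previous display,
\[
s(a) \cdot c \equiv s(1) \pmod{s(n)}.
\]

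Finally, since $s(1) \in R^\times$ by hypothesis, multiplying both sides by $s(1)^{-1}$ yields
\[
s(a) \cdot \bigl(c \cdot s(1)^{-1}\bigr) \equiv 1 \pmod{s(n)},
\]
which exhibits an inverse of $s(a)$ in $R/s(n)$. There is no real obstacle here; the only subtlety worth flagging is that one must pass through the divisibility statement to stay inside $R$ (rather than working in $R[1/s]$) before inverting $s(1)$.
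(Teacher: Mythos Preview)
Your proof is correct and follows essentially the same approach as the paper: both choose an inverse $b$ of $a$ modulo $n$, use Lemma~\ref{lem: congruence of generalized n-series} to get $s(ab)\equiv s(1)\pmod{s(n)}$ and Lemma~\ref{lem: divisibility of generalized n-series} to get $s(a)\mid s(ab)$, and conclude that $s(a)$ divides the unit $s(1)$ in $R/s(n)$. The only difference is that you spell out the inverse explicitly, whereas the paper simply notes that a divisor of a unit is a unit; your aside about $s(a)$ being a non-zero-divisor is unnecessary, since divisibility already furnishes the factor $c$.
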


\begin{proof}
Let $b$ be the multiplicative inverse of $a$ modulo $n$. Then, $ab \equiv 1 \pmod{n}$. By Lemmas \ref{lem: divisibility of generalized n-series} and \ref{lem: congruence of generalized n-series}, \[s(a) \mid s(ab) \equiv s(1) \pmod{s(n)}.\] So in the ring $R/s(n)$, $s(a)$ divides $s(1)$, which is a unit (because it is a unit in $R$). Therefore, $s(a)$ is a unit in $R/s(n)$.
\end{proof}

\begin{lemma}
\label{lem: s preserves gcd}
Let $a, b \in \Z_{\ge 0}$. Then, we have the following equivalence of ideals: \[\big(s(\gcd(a, b))\big) = \big(s(a), s(b)\big).\]
\end{lemma}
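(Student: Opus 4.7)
My plan is to prove the two containments of the ideal equality $(s(\gcd(a,b))) = (s(a), s(b))$ separately. The easy direction, $(s(a), s(b)) \subseteq (s(\gcd(a,b)))$, follows immediately from \cref{lem: divisibility of generalized n-series}: since $\gcd(a,b)$ divides both $a$ and $b$, the element $s(\gcd(a,b))$ divides both $s(a)$ and $s(b)$ in $R$, so both generators of the right-hand ideal lie in the left-hand ideal.

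The harder direction, $(s(\gcd(a,b))) \subseteq (s(a), s(b))$, is an $s$-analogue of Bezout's identity and will be proved by mimicking the Euclidean algorithm. I will proceed by strong induction on $\max(a, b)$. The base cases where $a = 0$ or $b = 0$ are trivial, since $\gcd(a, 0) = a$ and $s(0) = 0$. For the inductive step, assume without loss of generality $a \geq b > 0$, and write $a = qb + r$ with $0 \leq r < b$, so that $\gcd(a, b) = \gcd(b, r)$. By the inductive hypothesis applied to the pair $(b, r)$ (which has smaller maximum), $s(\gcd(b, r)) \in (s(b), s(r))$, so it suffices to prove $s(r) \in (s(a), s(b))$.

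To show the latter, I will use condition (3) of \cref{def: generalized n-series}. Assuming $r > 0$ (the case $r = 0$ being trivial, since then $b \mid a$ and $\gcd(a,b) = b$), we have $a > r > 0$ and $a - r = qb$, so condition (3) gives that $s(a - r) = s(qb)$ divides $s(a) - s(r)$ in $R$. By \cref{lem: divisibility of generalized n-series}, $s(b) \mid s(qb)$, so $s(b) \mid s(a) - s(r)$, which rearranges to $s(r) \in (s(a), s(b))$, as desired.

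I do not expect any serious obstacle: the proof is a direct transcription of the usual argument for $\gcd$ via the Euclidean algorithm, with condition (3) of \cref{def: generalized n-series} serving precisely as the $s$-analogue of the identity $s(a) - s(r) \equiv 0 \pmod{s(b)}$ that one needs. The only mild care required is to track the edge cases ($r = 0$, or one of $a, b$ being zero) so that the inductive machinery is not applied to degenerate inputs, and to ensure that condition (3) is invoked only when both $n > k > 0$.
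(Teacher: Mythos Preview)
Your proof is correct. Both containments are argued properly, the edge cases ($a=0$, $b=0$, $r=0$) are handled, and the induction is well-founded since $\max(b,r)=b<a=\max(a,b)$ whenever $r>0$ and $a\ge b$.

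Your approach differs from the paper's. The paper does not induct: for the inclusion $(s(d))\subseteq(s(a),s(b))$ it invokes B\'ezout's identity to write $d=am+bn$, reduces modulo $a$ to get $d\equiv bn\pmod a$, and then applies \cref{lem: congruence of generalized n-series} directly to conclude $s(d)\equiv s(bn)\pmod{s(a)}$; since $s(b)\mid s(bn)$ by \cref{lem: divisibility of generalized n-series}, this places $s(d)$ in $(s(a),s(b))$ in one stroke. Your argument instead replays the Euclidean algorithm at the level of $s$, using condition~(3) of \cref{def: generalized n-series} in place of the packaged congruence lemma. The paper's route is shorter because it cashes in \cref{lem: congruence of generalized n-series} as a black box; yours is more self-contained, appealing only to condition~(3) and \cref{lem: divisibility of generalized n-series}, and makes the Euclidean structure explicit. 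Either way the content is the same: the key step $s(b)\mid s(a)-s(r)$ in your argument is exactly an instance of \cref{lem: congruence of generalized n-series}.
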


\begin{proof}
Let $d = \gcd(a, b)$. By Lemma \ref{lem: divisibility of generalized n-series}, $s(d) \mid s(a)$ and $s(d) \mid s(b)$, so $s(a), s(b) \in \big(s(d)\big)$. This means that \[\big(s(d)\big) \supseteq \big(s(a), s(b)\big).\] For the other direction, we can use B\'ezout's identity to write $d = am + bn$ for some $m, n \in \Z$. Taking this equation modulo $a$ gives $d \equiv bn \pmod{a}$. By Lemma \ref{lem: congruence of generalized n-series}, $s(d) \equiv s(bn) \pmod{s(a)}$. Therefore, \cref{lem: divisibility of generalized n-series} implies that \[s(d) \in s(bn) + \big(s(a)\big) \subseteq \big(s(a), s(b)\big),\]
and hence $\big(s(d)\big) \subseteq \big(s(a), s(b)\big)$. This shows that the two ideals are equal.
\end{proof}

\subsection{The $s$-binomial theorem}
\label{sec: the s-binomial theorem}

\begin{recall}
The binomial theorem and the $q$-binomial theorem are the following two identities:
\begin{align*}
(x + y)^n &= \sum_{k = 0}^n \binom{n}{k} x^{n - k} y^k, \\
(x + y)^n_q &= \sum_{k = 0}^n \binom{n}{k}_q q^{k(k - 1)/2} x^{n - k} y^k,
\end{align*}
where $(x + y)^n_q$ is defined by \[(x + y)^n_q = \prod_{k = 0}^{n - 1} (x + q^k y) = (x + y)(x + qy)\cdots(x + q^{n - 1}y).\] 
Note that the $x + y$ in the parentheses is part of the notation, and cannot be treated as a sum; see \cite{quantum_calculus} for this notation.
\end{recall}
\begin{remark}
For readers who are familiar with the $q$-Pochhammer symbol, $(x + y)^n_q = x^n (-y/x; q)_n$, and $(a; q)_n = (1 + (-a))^n_q$.
\end{remark}

We will now state and prove an analogue of the binomial theorem for the $s$-binomial coefficients.
We begin by defining an analogue of the symbol $(x+y)^n_q$. To motivate the definition, recall that the $q$-analogue $(x + y)^n_q$ is the unique polynomial in $\Z\pw{q-1}[x, y]$ such that the following properties hold:
\begin{itemize}
    \item The $q$-derivative with respect to $x$ of $(x + y)^n_q$ is $[n]_q (x + y)^{n - 1}_q$. This is analogous to the fact that the classical derivative of $(x+y)^n$ with respect to $x$ is $n(x + y)^{n - 1}$.
    \item $(x + y)^0_q = 1$.
    \item If $y = -x$, then $(x + y)^n_q$ is $0$ for all $n > 0$.
\end{itemize}
To define an $s$-analogue $(x + y)^n_q$ in a similar way, we need an $s$-derivative; we will greatly expand on this notion in \cref{sec: formal group laws}.

\begin{definition}
\label{def: s-derivative}
    Let $s: \Z_{\geq 0} \to R$ be a GNS. The \textit{$s$-derivative} is the $R$-linear map $\nabla_s : R[x] \to R[x]$ given on monomials by $\nabla_s(x^n) = s(n) x^{n - 1}$.
\end{definition}
\begin{remark}
When $n = 0$, we have $\nabla_s(x^0) = s(0) x^{-1}$. This is not defined in $R[x]$ unless $s(0) = 0$, which is always true when $s$ is a GNS. Continuing \cref{rem: arbitrary-integer-binom}, this observation is another reason for requiring $s(0) = 0$ in the definition of GNS.
\end{remark}

We can now define $(x + y)^n_s$:
\begin{definition}
\label{def: (x + y)^n_s}
    Let $s$ be a GNS over $R$, so that $R[1/s] = R[s(1)^{-1}, s(2)^{-1}, \dots]$. Define $(x + y)^n_s$ for $n \in \Z_{\ge 0}$ to be the unique polynomial in $R[1/s][x, y]$ such that the following three conditions hold:
    \begin{enumerate}
        \item $(x + y)^0_s = 1$,
        \item $(x + (-x))^n_s = 0$ for all $n > 0$,
        \item $\nabla_{s, x} (x + y)^n_s = s(n) (x + y)^{n - 1}_s$.
    \end{enumerate}
    Here, $\nabla_{s, x}: R[1/s][x,y] \to R[1/s][x,y]$ is the operator given by the ``$s$-derivative with respect to $x$'': it is simply the $R[1/s][y]$-linear extension of the $s$-derivative $\nabla_s : R[x] \to R[x]$ to $R[1/s][x, y]$.
\end{definition}

\begin{lemma}\label{lem: sum-well-defined}
The symbol $(x + y)^n_s$ in \cref{def: (x + y)^n_s} is well-defined: it exists and is unique. Moreover, $(x + y)^n_s$ is a homogeneous polynomial of degree $n$.
\end{lemma}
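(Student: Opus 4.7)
The plan is to induct on $n$, using that condition (3) determines $(x+y)^n_s$ up to a polynomial in $y$ alone, and condition (2) then pins down that remaining polynomial. The base case $n=0$ is handled by condition (1).

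For the inductive step, assume $(x+y)^{n-1}_s$ has been constructed as a homogeneous polynomial of degree $n-1$ in $R[1/s][x,y]$. The first step is to identify the kernel of the operator $\nabla_{s,x}\colon R[1/s][x,y]\to R[1/s][x,y]$; since $\nabla_{s,x}(x^k y^j)=s(k)x^{k-1}y^j$ and $s(0)=0$ while $s(k)$ is a unit in $R[1/s]$ for $k\geq 1$, the kernel is exactly $R[1/s][y]$. Next, I would construct an explicit right inverse $\smallint_{s,x}\colon R[1/s][x,y]\to R[1/s][x,y]$ by the formula $x^k y^j\mapsto \frac{1}{s(k+1)}x^{k+1}y^j$, which raises total degree by one. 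Set
\[
P_n \;:=\; s(n)\,\textstyle\smallint_{s,x}(x+y)^{n-1}_s,
\]
so that $\nabla_{s,x}(P_n)=s(n)(x+y)^{n-1}_s$ and $P_n$ is homogeneous of degree $n$.

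Now let $Q_n(x):=P_n(x,-x)\in R[1/s][x]$, which is homogeneous of degree $n$, and define $g(y):=-Q_n(-y)\in R[1/s][y]$, which is homogeneous of degree $n$ in $y$. Finally set
\[
(x+y)^n_s \;:=\; P_n + g(y).
\]
Condition (3) follows since $g(y)$ lies in the kernel of $\nabla_{s,x}$; condition (2) follows because $(P_n+g(y))\big|_{y=-x}=Q_n(x)-Q_n(x)=0$; and homogeneity of degree $n$ is immediate from the construction. This establishes existence.

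For uniqueness, suppose $A$ and $B$ both satisfy (2) and (3), and let $D=A-B$. Then $\nabla_{s,x}(D)=0$, so by the kernel computation above, $D=D(y)\in R[1/s][y]$. Substituting $y=-x$ into $D=A-B$ gives $D(-x)=0$ as a polynomial in $x$, whence $D=0$. The main step requiring care is the kernel computation for $\nabla_{s,x}$, which crucially uses both $s(0)=0$ (so that constants in $x$ are in the kernel) and the fact that each $s(k)$ for $k\geq 1$ is a non-zero-divisor, hence invertible in $R[1/s]$ (so that no nonzero polynomial with an $x$-monomial lies in the kernel); beyond this, everything is formal.
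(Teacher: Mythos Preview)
Your proof is correct and follows essentially the same approach as the paper: induct on $n$, antidifferentiate $s(n)(x+y)^{n-1}_s$ via the operator $x^k y^j \mapsto \tfrac{1}{s(k+1)}x^{k+1}y^j$, then add the unique $y$-only correction forced by condition (2); your explicit computation of $\ker(\nabla_{s,x}) = R[1/s][y]$ makes the uniqueness step slightly crisper than the paper's version, but the argument is the same.
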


\begin{proof}
    We will use induction on $n$. For the base case $n = 0$, observe that $(x + y)^0_s = 1$ by condition (1). 
    
    For the inductive step, fix $n > 0$, and suppose that for all $k < n$, $(x + y)^k_s$ is well-defined and homogeneous of degree $k$.
    We can $s$-antidifferentiate $s(n) (x + y)^{n - 1}_s$ using the $R[1/s][y]$-linear operator $I_{s, x} : R[1/s][x, y] \to R[1/s][x, y]$ defined on monomials by $I_{s, x}(x^k) = s(k + 1)^{-1} x^{k + 1}$. By definition, this operator produces polynomials with no term of $x$-degree $0$. Although the $s$-antiderivative \[f(x, y) = I_{s, x}(s(n) (x + y)^{n - 1}_s)\] is homogeneous of degree $n$ (since the operator $I_{s, x}$ increases $x$-degree by $1$) and satisfies condition (3), it might not equal $(x + y)^n_s$ because it does not have to satisfy condition (2). Since $f(x, y)$ is homogeneous of degree $n$, $f(x, -x)$ is a scalar multiple of $x^n$, say $ax^n$. Then, the polynomial \[g(x, y) = f(x, y) - a(-y)^n\] satisfies \[\nabla_{s, x} g(x, y) = \nabla_{s, x} f(x, y) = s(n) (x + y)^{n - 1}_s\] and \[g(x, -x) = f(x, -x) - a(-(-x))^n = ax^n - ax^n = 0.\] It follows that $(x + y)^n_s$ exists, and one possible value for it is $g(x, y)$, which is homogeneous of degree $n$.

    It remains to show that $(x + y)^n_s$ is unique. We know that any polynomial $h(x, y)$ that satisfies the conditions of $(x + y)^n_s$ must match $g(x, y)$ in every term with positive $x$-degree, because their $s$-derivatives with respect to $x$ are both $s(n) (x + y)^{n - 1}_s$. Therefore, $h(x, y) - g(x, y)$ is a scalar multiple of $y^n$, say $by^n$. Setting $y = -x$ gives $b(-x)^n = h(x, -x) - g(x, -x)$, which is $0$ by condition (2), so $b = 0$. Therefore, $h(x, y) = g(x, y)$. This proves that $(x + y)^n_s$ is unique and is equal to $g(x, y)$.
\end{proof}

Recall from \cref{sec: the s-binomial coefficients} that we originally defined the $s$-binomial coefficients as elements of the ring $R[1/s] = R[s(1)^{-1}, s(2)^{-1}, \dots]$, and later proved (using the $s$-Pascal identity) that if $s$ is a GNS, then all the $s$-binomial coefficients are elements of $R$. We will do something similar for $(x + y)^n_s$ below, and we will use the $s$-binomial theorem as a lemma in the proof that $(x + y)^n_s \in R[x, y]$. Here is the $s$-binomial theorem:

\begin{theorem}[$s$-binomial theorem]\label{thm: s-binomial theorem}
    Let $s$ be a GNS over $R$. Then, as elements of $R[1/s][x,y]$, we have:
    \[(x + y)^n_s = \sum_{k = 0}^n \binom{n}{k}_s x^{n - k} y^k (0 + 1)^k_s.\]
\end{theorem}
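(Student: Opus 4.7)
The plan is to directly determine the coefficients of $(x+y)^n_s$ in the monomial basis, exploiting the fact (proved in \cref{lem: sum-well-defined}) that $(x+y)^n_s$ is homogeneous of degree $n$. Write
$$(x+y)^n_s = \sum_{k=0}^n a_{n,k}\, x^{n-k} y^k$$
with $a_{n,k} \in R[1/s]$; the goal then becomes to show that $a_{n,k} = \binom{n}{k}_s (0+1)^k_s$, which will immediately give the claimed formula.

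First, I would apply condition (3) of \cref{def: (x + y)^n_s}. Since $\nabla_{s,x}$ sends $x^{n-k} y^k$ to $s(n-k)\, x^{n-k-1} y^k$, comparing coefficients of $x^{n-k-1} y^k$ in the identity $\nabla_{s,x}(x+y)^n_s = s(n)(x+y)^{n-1}_s$ yields the recurrence
$$a_{n,k}\, s(n-k) = s(n)\, a_{n-1,k} \qquad (0 \le k \le n-1).$$
Next, iterating this recurrence downward in $n$ with $k$ fixed, the product telescopes to
$$a_{n,k} = \frac{s(n)\, s(n-1) \cdots s(k+1)}{s(n-k)\, s(n-k-1) \cdots s(1)}\, a_{k,k} = \binom{n}{k}_s\, a_{k,k},$$
a formula which also holds trivially when $k = n$. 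Finally, I would identify the ``diagonal'' coefficient by setting $x = 0$, $y = 1$ in the expansion, which simply extracts $a_{k,k}$; this gives $a_{k,k} = (0+1)^k_s$, and substituting back produces the desired identity.

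The computation is essentially mechanical once one exploits homogeneity. The only subtlety worth flagging is that condition (3) alone does not constrain the diagonal coefficients $a_{k,k}$ (the recurrence only relates $a_{n,k}$ to $a_{n-1,k}$ when $k \le n-1$, since $\nabla_{s,x}$ annihilates pure powers of $y$); those coefficients are instead pinned down by condition (2), and that constraint is already absorbed into the \emph{definition} of the symbol $(0+1)^k_s$. Thus the $s$-binomial theorem can be read as the statement that the off-diagonal coefficients of $(x+y)^n_s$ are determined by the diagonal ones through the $s$-binomial factors. An alternative route — verifying that the right-hand side satisfies conditions (1)--(3) of \cref{def: (x + y)^n_s} and invoking uniqueness — looks less convenient, because verifying condition (2) would require proving the nontrivial identity $\sum_{k=0}^n \binom{n}{k}_s (-1)^k (0+1)^k_s = 0$ for $n > 0$ as a separate step, whereas the approach above derives the formula directly.
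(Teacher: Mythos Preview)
Your argument is correct. The approach differs from the paper's, so a brief comparison is worthwhile.

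The paper proves the identity by induction on $n$: it applies the $s$-antidifferentiation operator $I_{s,x}$ to both sides of the formula at level $n-1$, which reproduces all terms of the claimed formula at level $n$ except the pure $y^n$ term; that last term is then supplied by hand, and one checks the result agrees with $(x+y)^n_s$ via the defining conditions. Your route is instead to extract the coefficient recurrence $a_{n,k}\,s(n-k) = s(n)\,a_{n-1,k}$ directly from condition~(3), telescope it to $a_{n,k} = \binom{n}{k}_s a_{k,k}$, and read off the diagonal values by specialization. The underlying content is the same (both use condition~(3) to propagate from $n-1$ to $n$, and both leave the $y^k$-coefficient of $(x+y)^k_s$ as the only undetermined datum), but your packaging is more economical: no induction scaffolding, no antidifferentiation operator, and the role of $(0+1)^k_s$ as the ``initial condition'' for the recurrence is made explicit. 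The paper's version has the minor advantage of running in parallel with the existence proof of \cref{lem: sum-well-defined}, so the two arguments share machinery.
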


\begin{proof}
    We will use induction on $n$, and the inductive step will mainly consist of applying the $s$-antidifferentiation operator $I_{s, x}$ from the proof of \cref{lem: sum-well-defined} to both sides. For the base case, observe that if $n = 0$, both sides are $1$. 
    
    For the inductive step, fix $n > 0$, and assume that the $s$-binomial theorem is true for $n - 1$: \[(x + y)^{n - 1}_s = \sum_{k = 0}^{n - 1} \binom{n - 1}{k}_s x^{n - k - 1} y^k (0 + 1)^k_s.\] Multiplying both sides by $s(n)$, applying $I_{s, x}$, and using $R[1/s][y]$-linearity gives:
    \begin{align*}
        I_{s, x}(s(n) (x + y)^{n - 1}_s)
        &= s(n) \sum_{k = 0}^{n - 1} \binom{n - 1}{k}_s I_{s, x}(x^{n - k - 1}) y^k (0 + 1)^k_s \\
        &= \sum_{k = 0}^{n - 1} \frac{s(n)}{s(n - k)} \binom{n - 1}{k}_s x^{n - k} y^k (0 + 1)^k_s.
    \end{align*}
    Notice that \[\frac{s(n)}{s(n - k)} \binom{n - 1}{k}_s = \frac{s(n) (n - 1)!_s}{s(n - k) k!_s (n - k - 1)!_s} = \frac{n!_s}{k!_s (n - k)!_s} = \binom{n}{k}_s.\] This implies that \[I_{s, x}(s(n) (x + y)^{n - 1}_s) = \sum_{k = 0}^{n - 1} \binom{n}{k}_s x^{n - k} y^k (0 + 1)^k_s.\] The right-hand side almost looks like the right-hand side of the $s$-binomial theorem that we are trying to prove, but the upper limit of the summation is $n - 1$ instead of $n$. To fix this, add $y^n (0 + 1)^n_s$ to both sides, giving \[I_{s, x}(s(n) (x + y)^{n - 1}_s) + y^n (0 + 1)^n_s = \sum_{k = 0}^n \binom{n}{k}_s x^{n - k} y^k (0 + 1)^k_s.\] We just have to show that the left-hand side is equal to $(x + y)^n_s$.

    Let $g(x, y)$ be the left-hand side. The $s$-derivative with respect to $x$ of $g(x, y)$ is $s(n) (x + y)^{n - 1}_s$, because $I_{s, x}$ is an $s$-antiderivative operator (a right inverse of $\nabla_{s, x}$) and $y^n (0 + 1)^n_s$ is constant with respect to $x$. This is equal to the $s$-derivative of $(x + y)^n_s$, so $g(x, y)$ matches $(x + y)^n_s$ in all terms with positive $x$-degree. Both $g(x, y)$ and $(x + y)^n_s$ are homogeneous of degree $n$, so the only terms with $x$-degree $0$ are the $y^n$ terms. The coefficient of $y^n$ in $g(x, y)$ is $(0 + 1)^n_s$ because $I_{s, x}$ never produces terms with $x$-degree $0$. The $y^n$ term of $(x + y)^n_s$ is $(0 + y)^n_s$, which is $y^n (0 + 1)^n_s$ by homogeneity, so the coefficient of $y^n$ in $(x + y)^n_s$ is also $(0 + 1)^n_s$. Therefore, $g(x, y) = (x + y)^n_s$, so \[(x + y)^n_s = \sum_{k = 0}^n \binom{n}{k}_s x^{n - k} y^k (0 + 1)^k_s.\] This is what we needed to show for the inductive step.
\end{proof}

To complete this subsection, we will show that the coefficients of $(x + y)^n_s$ are elements of $R$ for all GNS $s$ over $R$ and all $n \in \Z_{\ge 0}$. This means that the $s$-binomial theorem is really an equivalence of polynomials in $R[x, y]$, and can be stated without using the larger ring $R[1/s][x, y]$.

\begin{prop}
    Let $s$ be a GNS over a ring $R$. For all nonnegative integers $n$, we have $(x + y)^n_s \in R[x, y]$.
\end{prop}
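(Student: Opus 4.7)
The plan is to proceed by strong induction on $n$, with the base case $n=0$ being immediate from $(x+y)^0_s = 1$. For the inductive step, I would invoke the explicit recursive construction of $(x+y)^n_s$ built in the proof of \cref{lem: sum-well-defined}: one has $(x+y)^n_s = f(x,y) - a(-y)^n$, where $f(x,y) = I_{s,x}\bigl(s(n)(x+y)^{n-1}_s\bigr)$ and $a \in R[1/s]$ is the coefficient of $x^n$ in $f(x,-x)$. It therefore suffices to show that the coefficients of both $f(x,y)$ and the scalar $a$ lie in $R$.

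By the inductive hypothesis, write $(x+y)^{n-1}_s = \sum_{k=0}^{n-1} b_{n-1,k}\, x^{n-1-k} y^k$ with each $b_{n-1,k} \in R$. Applying the $s$-antiderivative operator $I_{s,x}$ and multiplying by $s(n)$ gives
$$f(x,y) = \sum_{k=0}^{n-1} \frac{s(n)}{s(n-k)}\, b_{n-1,k}\, x^{n-k} y^k.$$
The key manipulation is to use \cref{thm: s-binomial theorem} (valid in $R[1/s][x,y]$) to rewrite $b_{n-1,k} = \binom{n-1}{k}_s (0+1)^k_s$, and then to observe the elementary identity $\frac{s(n)}{s(n-k)}\binom{n-1}{k}_s = \binom{n}{k}_s$ coming directly from the definition of the $s$-binomial coefficients. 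This collapses each coefficient of $f$ to $\binom{n}{k}_s (0+1)^k_s$. The factor $\binom{n}{k}_s$ lies in $R$ by \cref{thm: s-Pascal identity}, and $(0+1)^k_s$ is equal to $b_{k,k}$ (the coefficient of $y^k$ in $(x+y)^k_s$, as one sees by setting $x=0$ and using homogeneity), hence lies in $R$ by the inductive hypothesis. Thus $f(x,y) \in R[x,y]$.

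Finally, since $f$ is homogeneous of degree $n$, we have $f(x,-x) = f(1,-1)\, x^n$, so $a = f(1,-1)$ is a polynomial expression in the coefficients of $f$ and therefore lies in $R$. Consequently $(x+y)^n_s = f(x,y) - a(-y)^n \in R[x,y]$, completing the induction. The only mildly delicate point---and the main obstacle---is recognizing the cancellation $\frac{s(n)\binom{n-1}{k}_s}{s(n-k)} = \binom{n}{k}_s$, which clears the potentially problematic denominator $s(n-k)$ introduced by $s$-antidifferentiation; once this is in hand, the rest of the argument is routine bookkeeping.
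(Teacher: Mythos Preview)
Your argument is correct and is essentially the same as the paper's, just unwound through the explicit $f$ and $a$ construction from \cref{lem: sum-well-defined} rather than stated directly in terms of the $s$-binomial expansion. The paper first observes that by \cref{thm: s-binomial theorem} it suffices to prove $(0+1)^n_s \in R$, and then derives the recurrence $(0+1)^n_s = \sum_{k=0}^{n-1} (-1)^{n-k-1}\binom{n}{k}_s (0+1)^k_s$ by specializing the $s$-binomial theorem at $x=-1,\ y=1$; your computation of $a = f(1,-1)$ is exactly this same recurrence in disguise, and your use of $(0+1)^k_s = b_{k,k}$ for $k<n$ is the same inductive input. The only cosmetic difference is that the paper isolates the scalar $(0+1)^n_s$ at the outset, which makes the induction a bit tidier, whereas you carry the full polynomial $f$ through the argument before extracting $a$ at the end.
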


\begin{proof}
    It suffices to show that $(0 + 1)^n_s \in R$ for all nonnegative integers $n$, because using the $s$-binomial theorem, we could conclude that \[(x + y)^n_s = \sum_{k = 0}^n \binom{n}{k}_s x^{n - k} y^k (0 + 1)^k_s \in R[x, y].\] Setting $x = -1$ and $y = 1$ in the $s$-binomial theorem, we get \[0 = ((-1) + 1)^n_s = \sum_{k = 0}^n \binom{n}{k}_s (-1)^{n - k} (0 + 1)^k_s,\] so
    \begin{equation}\label{eq: recurrence (0+1)^n_s}
        (0 + 1)^n_s = \sum_{k = 0}^{n - 1} \binom{n}{k}_s (-1)^{n - k - 1} (0 + 1)^k_s.
    \end{equation}
    This is a recurrence relation for $(0 + 1)^n_s$. 
    
    To prove that $(0+1)^n_s\in R$, we will use induction on $n$. For the base case, note that $(0 + 1)^0_s = 1$ which is an element of $R$. For the inductive step, note that if $(0 + 1)^k_s \in R$ for all $k < n$, then by the recurrence relation \cref{eq: recurrence (0+1)^n_s} and the fact that the $s$-binomial coefficients are in $R$ (\cref{thm: s-Pascal identity}), \[(0 + 1)^n_s = \sum_{k = 0}^{n - 1} \binom{n}{k}_s (-1)^{n - k - 1} (0 + 1)^k_s \in R.\] This completes the induction.
\end{proof}
\begin{remark}
Using the recurrence relation \cref{eq: recurrence (0+1)^n_s}, it can be shown that \[\frac{(-1)^n (0 + 1)^n_s}{n!_s} = \sum_{\ell = 1}^n \left(\sum_{k_1 + \cdots + k_\ell = n} \left(\prod_{j = 1}^\ell \frac{(-1)}{k_\ell!_s} \right)\right),\] where the inner sum is over all ordered $\ell$-tuples $(k_1, k_2, \dots, k_\ell)$ of positive integers that sum to $n$. Combining the inner and outer sums, the right-hand side can be viewed as a sum over compositions (ordered partitions) of $n$. Isolating $(0 + 1)^n_s$ gives 
\[(0 + 1)^n_s = \sum_{\pi \text{ composition of } n} (-1)^{n - |\pi|} \binom{n}{\pi_1, \pi_2, \dots, \pi_{|\pi|}}_s.\] The summand is an $s$-multinomial coefficient, defined by \[\binom{n}{k_1, k_2, \dots, k_n}_s = \frac{n!_s}{k_1!_s \cdot k_2!_s \cdot \cdots \cdot k_n!_s},\] and $|\pi|$ denotes the length of $\pi$.
\end{remark}
\subsection{The $s$-Lucas theorem}
\label{sec: the s-Lucas theorem}

\begin{recall}
Lucas's theorem says that for all primes $p$ and all integers $n_1, n_0, k_1, k_0 \in \Z_{\ge 0}$ such that $n_0, k_0 < p$, \[\binom{n_1 p + n_0}{k_1 p + k_0} \equiv \binom{n_1}{k_1} \binom{n_0}{k_0} \pmod{p}.\] There is a $q$-analogue of this identity, known as the $q$-Lucas theorem: \[\binom{n_1 p + n_0}{k_1 p + k_0}_q \equiv \binom{n_1}{k_1} \binom{n_0}{k_0}_q \pmod{[p]_q}.\]
This identity is also true if $p$ is composite, as long as we replace the modulus $[p]_q$ with the cyclotomic polynomial $\Phi_p(q)$. Notice that the first binomial coefficient on the right-hand side of the $q$-Lucas theorem is \textit{not} a $q$-binomial coefficient.
\end{recall}

Here is an $s$-analogue of Lucas's theorem:

\begin{theorem}[$s$-Lucas theorem]
\label{thm: s-Lucas theorem with extra condition}
Let $s$ be a GNS over $R$ such that $s(1) = 1$ and
\begin{equation}\label{eq: s-lucas-criterion}
    s(a + b) \equiv s(a) + s(b) \pmod{s(a) s(b)}
\end{equation}
for all $a, b \in \Z_{> 0}$. Then, for any prime $p$ and any nonnegative integers $n_1, n_0, k_1, k_0$ such that $n_0, k_0 < p$, we have \[\binom{n_1 p + n_0}{k_1 p + k_0}_s \equiv \binom{n_1}{k_1} \binom{n_0}{k_0}_s \pmod{s(p)}.\]
\end{theorem}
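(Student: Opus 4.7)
The plan is to proceed by induction on $n_1$, and within the step at level $n_1$ to sub-induct on $n_0$ starting from $n_0 = 0$, applying the $s$-Pascal identity of \cref{thm: s-Pascal identity} with case analysis on the base-$p$ residues. Two auxiliary facts will do most of the heavy lifting. First (call it Fact A): using the extra hypothesis $s(a+b) \equiv s(a) + s(b) \pmod{s(a) s(b)}$, an easy induction on $n$ (via $s((n+1)p) \equiv s(np) + s(p) \pmod{s(np) s(p)}$, observing that $s(p) \mid s(np)$ by \cref{lem: divisibility of generalized n-series}) yields the refined congruence $s(np) \equiv n \cdot s(p) \pmod{s(p)^2}$. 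Second (Fact B): the exact factorial-level identity $s(p)\binom{p-1}{k}_s = s(p-k)\binom{p}{k}_s$ holds in $R$, and since $\gcd(p, p-k) = 1$ with $s(1) = 1$, \cref{lem: s preserves gcd} forces $(s(p), s(p-k)) = R$, whence B\'ezout gives $s(p) \mid \binom{p}{k}_s$ for $0 < k < p$.

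The Pascal case analysis then goes as follows. When $n_0 \geq k_0 > 0$, IH applies to both Pascal terms (they have smaller $n_0$); if additionally $n_0 > k_0$ then \cref{lem: s of a unit mod n is a unit mod s(n)} makes $s(n_0 - k_0)$ a unit modulo $s(p)$, so $c_s(N, K) \equiv c_s(n_0, k_0) \pmod{s(p)}$ and $s$-Pascal for $\binom{n_0}{k_0}_s$ peels off the $\binom{n_1}{k_1}$ factor cleanly; if $n_0 = k_0$ then $\binom{n_0-1}{k_0}_s = 0$, so the second Pascal term is $\equiv 0 \pmod{s(p)}$ by IH and the unknown $c_s(N,K)$ drops out. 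The case $n_0 > 0, k_0 = 0$ is analogous with $c_s(N, K) \equiv 1 \pmod{s(p)}$. The case $k_0 > n_0$ has vanishing RHS, and $\binom{N}{K}_s \equiv 0 \pmod{s(p)}$ follows from IH applied to both Pascal terms in the generic subcase; the boundary subcase $n_0 = 0$ is closed by Fact B, which forces the bracket $\binom{p-1}{k_0-1}_s - (s(k_0)/s(p-k_0))\binom{p-1}{k_0}_s$ to vanish modulo $s(p)$ (since it is a reformulation of $-\binom{p}{k_0}_s/s(p-k_0)$).

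The main obstacle is the starting case $n_0 = k_0 = 0$, which demands the independent assertion $\binom{n_1 p}{k_1 p}_s \equiv \binom{n_1}{k_1} \pmod{s(p)}$ before the sub-induction on $n_0$ can get off the ground. My approach is to apply the $s$-Vandermonde identity of \cref{thm: s-Vandermonde identity} to decompose $\binom{n_1 p}{k_1 p}_s = \binom{(n_1-1) p + p}{k_1 p}_s$ as a weighted sum $\sum_j w_j \binom{(n_1-1)p}{j}_s \binom{p}{k_1 p - j}_s$. By Fact B, every term with $k_1 p - j \notin \{0, p\}$ vanishes modulo $s(p)$, leaving only $j = k_1 p$ and $j = (k_1 - 1)p$; the outer induction hypothesis at level $n_1 - 1$ then identifies $\binom{(n_1-1)p}{k_1 p}_s \equiv \binom{n_1-1}{k_1}$ and $\binom{(n_1-1)p}{(k_1-1)p}_s \equiv \binom{n_1-1}{k_1-1} \pmod{s(p)}$, and classical Pascal assembles these into $\binom{n_1}{k_1}$ --- provided the two surviving weights $w_{k_1 p}$ and $w_{(k_1-1) p}$ are each congruent to $1$ modulo $s(p)$. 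Checking this final reduction of the $s$-Vandermonde weights is the hardest technical step; I expect it to come down to Fact A together with the periodicity in \cref{lem: congruence of generalized n-series}, exactly as in the $q$-analogue where $q^p \equiv 1 \pmod{[p]_q}$ forces the corresponding weights $q^{(m-j)(k-j)}$ to trivialize on these two indices.
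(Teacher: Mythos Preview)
Your overall architecture is quite different from the paper's, and the inductive step via the $s$-Pascal identity (your case analysis on $n_0,k_0$) is carried out carefully and correctly. However, the base case $n_0=k_0=0$ has a genuine gap beyond the one you already flag.

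The form of the $s$-Vandermonde identity you invoke does not exist in the paper. The paper's \cref{thm: s-Vandermonde identity} reads
\[
\binom{m+n}{k}_s=\sum_j \binom{m}{k-j}_s\,\combbinom{n}{j}{m}{k}_s,
\]
with exactly \emph{one} $s$-binomial factor; the second factor is the combinatorial symbol of \cref{def: s-combbinom}, and for general $s$ it does \emph{not} split as a weight times $\binom{n}{j}_s$. Your decomposition $\sum_j w_j \binom{(n_1-1)p}{j}_s\binom{p}{k_1 p-j}_s$ simultaneously uses Fact~B to kill the $\binom{p}{\cdot}_s$ factor (which forces the orientation $m=p$, $n=(n_1-1)p$) and the outer induction hypothesis on $\binom{(n_1-1)p}{\cdot}_s$ (which forces $m=(n_1-1)p$, $n=p$). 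These two orientations are incompatible: in the first, the surviving factor is the combinatorial symbol $\combbinom{(n_1-1)p}{j}{p}{k_1 p}_s$, to which your induction hypothesis about $s$-binomials does not apply.

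Your argument can be salvaged by taking the second orientation $m=(n_1-1)p$, $n=p$. Then the $s$-binomial factor is $\binom{(n_1-1)p}{k_1 p-j}_s$, and the induction hypothesis alone kills the middle terms (for $0<j<p$ one has $k_1 p-j=(k_1-1)p+(p-j)$ with $0<p-j<p$, so the IH gives $\binom{(n_1-1)p}{k_1 p-j}_s\equiv\binom{n_1-1}{k_1-1}\binom{0}{p-j}_s=0$); Fact~B is unnecessary here. The surviving weights are $\combbinom{p}{0}{(n_1-1)p}{k_1 p}_s=1$ and $\combbinom{p}{p}{(n_1-1)p}{k_1 p}_s=\prod_{\ell=1}^p c_s((n_1-1)p+\ell,(n_1-k_1)p)$, and each factor of the latter is $\equiv 1\pmod{s(p)}$ by the extra hypothesis \eqref{eq: s-lucas-criterion} applied with $a=(n_1-k_1)p$, $b=(k_1-1)p+\ell$. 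So the route closes, but not as you wrote it.

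By contrast, the paper's proof is much more direct and avoids both Pascal and Vandermonde for the base case. It simply writes $\binom{pn}{pk}_s$ as a ratio of products of $s$-values and cancels $s(pn-j)$ against $s(pk-j)$ modulo $s(p)$ for each $j$ with $p\nmid j$ (these are congruent units by \cref{lem: congruence of generalized n-series} and \cref{lem: s of a unit mod n is a unit mod s(n)}), leaving exactly $\binom{n}{k}_{s_p}$; the extra hypothesis then enters only through \cref{lem: s_m binom congruent to usual binom mod s(m)} to replace $\binom{n}{k}_{s_p}$ by $\binom{n}{k}$. The extensions to general $n_0$ and $k_0$ are handled not via Pascal but via the multiplicative identities $s(n-k)\binom{n}{k}_s=s(n)\binom{n-1}{k}_s$ and $s(k)\binom{n}{k}_s=s(n-k+1)\binom{n}{k-1}_s$, which let one step $n_0$ and $k_0$ one unit at a time while only ever dividing by units modulo $s(p)$. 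This approach isolates the role of the extra hypothesis cleanly (it is used exactly once, at the very end) and never requires analyzing the combinatorial symbol.
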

\begin{remark}\label{rmk: cyclotomic polynomial}
Define $\Phi_n(s) = \prod_{d \mid n} s(d)^{\mu(n/d)}$, where $\mu$ denotes the M\"obius function. Note that M\"obius inversion implies the identity $s(n) = \prod_{d \mid n} \Phi_n(s)$. One can prove a ``composite version'' of the $s$-Lucas theorem where $s(p)$ is replaced by $\Phi_n(s)$, but the proof is more complicated.
\end{remark}

For the rest of this section, we fix the GNS $s$ over $R$. To simplify the statement of the $s$-Lucas theorem, we will use the following:

\begin{notation}\label{def: capital-CS-ab}
For integers $a, b \in \Z_{> 0}$, we define \[C_s(a, b) = \frac{s(a + b) - s(a) - s(b)}{s(a) s(b)} \in R[1/s].\]
The extra condition \cref{eq: s-lucas-criterion} in \cref{thm: s-Lucas theorem with extra condition} is therefore equivalent to $C_s(a, b)$ being defined in $R$ for all $a, b \in \Z_{> 0}$.
\end{notation}

\begin{lemma}
\label{lem: C defined implies c(mn, mk) is 1 mod s(m)}
Suppose that $C_s(a, b)$ is defined in $R$ for all $a, b \in \Z_{> 0}$. Then, for all integers $m > 0$ and $n > k \ge 0$, \[c_s(mn, mk) \equiv 1 \pmod{s(m)}.\]
\end{lemma}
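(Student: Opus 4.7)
The plan is to reduce the desired congruence directly to the definitions of $c_s$ and $C_s$, and then exploit the divisibility property $s(m) \mid s(mk)$ from Lemma \ref{lem: divisibility of generalized n-series}.

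First, I would dispose of the boundary case $k = 0$ immediately: by definition $c_s(mn, 0) = (s(mn) - s(0))/s(mn) = 1$, so the congruence is trivial. So from now on assume $k > 0$, hence $mk, m(n-k) > 0$.

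Next, I would unwind what the congruence means. By definition,
$$c_s(mn, mk) - 1 = \frac{s(mn) - s(mk) - s(m(n-k))}{s(m(n-k))},$$
so proving $c_s(mn, mk) \equiv 1 \pmod{s(m)}$ amounts to showing that the right-hand side lies in $s(m) \cdot R$. Now the hypothesis that $C_s(a,b) \in R$ for all positive $a, b$ applied to $a = mk$, $b = m(n-k)$ gives
$$C_s(mk, m(n-k)) = \frac{s(mn) - s(mk) - s(m(n-k))}{s(mk)\, s(m(n-k))} \in R,$$
and rearranging yields
$$\frac{s(mn) - s(mk) - s(m(n-k))}{s(m(n-k))} = s(mk) \cdot C_s(mk, m(n-k)).$$

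Finally, since $m \mid mk$, Lemma \ref{lem: divisibility of generalized n-series} (a consequence of the GNS axioms proved earlier in the paper) gives $s(m) \mid s(mk)$, so $s(mk) = s(m) \cdot \alpha$ for some $\alpha \in R$. Plugging this in shows $c_s(mn, mk) - 1 = s(m) \cdot \alpha \cdot C_s(mk, m(n-k)) \in s(m) R$, which is exactly what we want. There is no real obstacle here; the argument is essentially a two-line manipulation once one notices that the ``defect'' measured by $c_s(mn,mk)-1$ is precisely $s(mk)$ times the $C_s$ defect, and the factor of $s(mk)$ supplies the required factor of $s(m)$.
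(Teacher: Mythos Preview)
Your proof is correct and follows essentially the same route as the paper: both compute $c_s(mn,mk)-1 = s(mk)\,C_s(mk,m(n-k))$ and then invoke $s(m)\mid s(mk)$ from Lemma~\ref{lem: divisibility of generalized n-series}. Your explicit treatment of the boundary case $k=0$ is a slight improvement, since $C_s(0,\,\cdot\,)$ is not defined and the paper's identity technically needs $mk>0$.
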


\begin{proof}
By definition, \[c_s(mn, mk) - 1 = \frac{s(mn) - s(mk) - s(mn - mk)}{s(mn - mk)} = s(mk) C_s(mk, mn - mk).\] By Lemma \ref{lem: divisibility of generalized n-series}, $s(m) \mid s(mk)$, so the right-hand side is divisible by $s(m)$. Therefore, $c_s(mn, mk) - 1 \equiv 0 \pmod{s(m)}$.
\end{proof}

\begin{remark}\label{rem: s_m rescaling}
A consequence of this lemma is the interesting fact that if we define the ``rescaled'' GNS $s_m(n) = s(mn)$ for each positive integer $m$, then Pascal's identity for the $s_m$-binomial coefficients is the same as the usual Pascal's identity when we reduce modulo $s(m)$. This implies the following congruence:
\end{remark}

\begin{lemma}
\label{lem: s_m binom congruent to usual binom mod s(m)}
Suppose that $C_s(a, b)$ is defined for all $a, b \in \Z_{> 0}$. For all integers $m > 0$ and $0 \le k \le n$, \[\binom{n}{k}_{s_m} \equiv \binom{n}{k} \pmod{s(m)}.\]
\end{lemma}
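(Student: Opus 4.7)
The plan is to induct on $n$ using the Pascal recurrence, exploiting the congruence from \cref{lem: C defined implies c(mn, mk) is 1 mod s(m)} as suggested by \cref{rem: s_m rescaling}.

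First I would record the key observation that for the rescaled GNS $s_m$, the Pascal coefficient is
\[c_{s_m}(n,k) = \frac{s_m(n)-s_m(k)}{s_m(n-k)} = \frac{s(mn)-s(mk)}{s(m(n-k))} = c_s(mn,mk),\]
so by \cref{lem: C defined implies c(mn, mk) is 1 mod s(m)} we have $c_{s_m}(n,k) \equiv 1 \pmod{s(m)}$. Substituting into the $s_m$-Pascal identity (\cref{thm: s-Pascal identity} applied to the GNS $s_m$) yields
\[\binom{n}{k}_{s_m} \;\equiv\; \binom{n-1}{k-1}_{s_m} + \binom{n-1}{k}_{s_m} \pmod{s(m)},\]
for all $n > k > 0$, which is exactly the recurrence satisfied by the ordinary binomial coefficients.

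Next I would verify that $s_m$ is actually a GNS, so that \cref{thm: s-Pascal identity} applies and the $s_m$-binomial coefficients live in $R$: conditions (1)--(3) for $s_m$ follow immediately from the corresponding conditions for $s$ (for (3), if $n > k$ then $s(m(n-k))\mid s(mn)-s(mk)$ since $s$ itself satisfies the divisibility condition). With this in hand, I would run a straightforward induction on $n$. The base case $n = 0$ is trivial since $\binom{0}{0}_{s_m} = 1 = \binom{0}{0}$; the boundary cases $k = 0$ and $k = n$ both give $1$ on each side and require no congruence. For the inductive step with $0 < k < n$, apply the displayed congruence above together with the inductive hypothesis $\binom{n-1}{k-1}_{s_m} \equiv \binom{n-1}{k-1}$ and $\binom{n-1}{k}_{s_m} \equiv \binom{n-1}{k} \pmod{s(m)}$, and then invoke the classical Pascal identity $\binom{n-1}{k-1}+\binom{n-1}{k} = \binom{n}{k}$.

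There is essentially no obstacle here: once the rescaled Pascal coefficient is shown to be $\equiv 1 \pmod{s(m)}$ (which is immediate from \cref{lem: C defined implies c(mn, mk) is 1 mod s(m)}), the induction is purely mechanical. The only minor subtlety worth spelling out is checking that $s_m$ inherits the GNS axioms so that the $s_m$-Pascal identity is valid as a statement in $R$ rather than merely in the localization $R[1/s]$.
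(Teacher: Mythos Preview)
Your proof is correct and follows essentially the same approach as the paper: induction on $n$ via the $s_m$-Pascal identity, using $c_{s_m}(n,k)=c_s(mn,mk)\equiv 1\pmod{s(m)}$ to reduce to the classical Pascal recurrence. Your additional remarks (verifying that $s_m$ is a GNS and treating the boundary cases $k=0,n$ separately) are minor elaborations the paper leaves implicit.
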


\begin{proof}
We will use induction on $n$. For the base case, observe that if $n = 0$ then $k = 0$, so both sides are $1$. For the inductive step, assume that the desired claim is true when $n$ is replaced by $n - 1$. Then:
\begin{align*}
\binom{n}{k}_{s_m}
&= \binom{n - 1}{k - 1}_{s_m} + c_{s_m}(n, k) \binom{n - 1}{k}_{s_m} &\text{by the $s$-Pascal identity (\cref{thm: s-Pascal identity})}, \\
&= \binom{n - 1}{k - 1}_{s_m} + c_s(mn, mk) \binom{n - 1}{k}_{s_m} &\text{by definition of $c_{s_m}$}, \\
&\equiv \binom{n - 1}{k - 1}_{s_m} + \binom{n - 1}{k}_{s_m} &\text{by Lemma \ref{lem: C defined implies c(mn, mk) is 1 mod s(m)}}, \\
&\equiv \binom{n - 1}{k - 1} + \binom{n - 1}{k} \qquad &\text{by inductive hypothesis}, \\
&= \binom{n}{k} \pmod{s(m)} &\text{by Pascal's identity}.
\end{align*}
Therefore, by induction, this is true for all $m \in \Z_{> 0}$.
\end{proof}

\cref{thm: s-Lucas theorem with extra condition} is a consequence of a slight variant.
\begin{prop}[Another $s$-Lucas theorem]
\label{thm: s-Lucas theorem}
Let $s$ be a GNS such that $s(1) = 1$. Let $p$ be prime and let $n_1, n_0, k_1, k_0 \in \Z_{\ge 0}$ such that $n_0, k_0 < m$. Then, \[\binom{n_1 p + n_0}{k_1 p + k_0}_s \equiv \binom{n_1}{k_1}_{s_p} \binom{n_0}{k_0}_s \pmod{s(p)},\] where $s_p$ is the rescaled GNS from \cref{rem: s_m rescaling}.
\end{prop}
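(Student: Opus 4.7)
The plan is to compute both sides by partitioning the factors of $n!_s$ according to divisibility by $p$, and then to reduce modulo $s(p)$. The key technical input is \cref{lem: s of a unit mod n is a unit mod s(n)}, which (combined with the hypothesis $s(1) = 1$) ensures that $s(j)$ is a unit in $R/s(p)$ for every $1 \le j < p$. Concretely, for $m \in \Z_{\ge 0}$ and $0 \le r < p$ set
\[\widetilde{P}_{m,r} := \prod_{i=0}^{m-1} \prod_{j=1}^{p-1} s(ip+j) \cdot \prod_{j=1}^{r} s(mp+j) \in R;\]
regrouping the factors of $(mp+r)!_s$ yields the identity $(mp+r)!_s = m!_{s_p} \cdot \widetilde{P}_{m,r}$. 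By \cref{lem: congruence of generalized n-series} we have $\widetilde{P}_{m,r} \equiv \bigl(\prod_{j=1}^{p-1} s(j)\bigr)^m \cdot r!_s \pmod{s(p)}$, and each factor on the right-hand side is a unit modulo $s(p)$.

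Suppose first that $n_0 \ge k_0$, so that $n-k = (n_1-k_1)p + (n_0-k_0)$ with $0 \le n_0-k_0 < p$. Combining the three factorizations yields the identity in $R$
\[\binom{n_1 p + n_0}{k_1 p + k_0}_s \cdot \widetilde{P}_{k_1,k_0}\, \widetilde{P}_{n_1-k_1,n_0-k_0} \;=\; \binom{n_1}{k_1}_{s_p} \cdot \widetilde{P}_{n_1,n_0}.\]
Reducing modulo $s(p)$ and canceling the (unit) common factor $\bigl(\prod_{j=1}^{p-1} s(j)\bigr)^{n_1}$ gives
\[\binom{n_1 p + n_0}{k_1 p + k_0}_s \cdot (k_0)!_s\, (n_0-k_0)!_s \;\equiv\; \binom{n_1}{k_1}_{s_p} \cdot (n_0)!_s \pmod{s(p)}.\]
Writing $(n_0)!_s = (k_0)!_s (n_0-k_0)!_s \binom{n_0}{k_0}_s$ and dividing by $(k_0)!_s (n_0-k_0)!_s$ (again a unit modulo $s(p)$, since $k_0, n_0-k_0 < p$) yields the desired congruence.

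In the remaining case $n_0 < k_0$, one writes $n-k = (n_1-k_1-1)p + (p + n_0 - k_0)$ with $0 < p + n_0 - k_0 < p$, and the analogous computation picks up an extra factor of $s_p(n_1-k_1) = s(p(n_1-k_1))$, which is divisible by $s(p)$ by \cref{lem: divisibility of generalized n-series}. After the (unit) $\widetilde{P}$ and factorial factors are canceled modulo $s(p)$, one concludes $\binom{n_1 p + n_0}{k_1 p + k_0}_s \equiv 0 \pmod{s(p)}$, which matches $\binom{n_1}{k_1}_{s_p}\binom{n_0}{k_0}_s = 0$ via $\binom{n_0}{k_0}_s = 0$ (per \cref{rem: arbitrary-integer-binom}). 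The main subtlety throughout is ensuring that the elements being canceled at each step are genuinely units modulo $s(p)$; this is handled uniformly by the observation above that $s(j)$ is a unit in $R/s(p)$ for $1 \le j < p$.
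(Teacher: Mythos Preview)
Your proof is correct and takes a more unified route than the paper's. The paper establishes the result in three successive stages: first the case $n_0 = k_0 = 0$ by direct cancellation of congruent units (essentially your $\widetilde{P}$ computation specialized to $r = 0$), then the case $k_0 = 0$ with arbitrary $n_0$ by repeatedly applying the identity $s(n-k)\binom{n}{k}_s = s(n)\binom{n-1}{k}_s$ to decrement $n_0$, and finally the general case by induction on $k_0$ using the companion identity $s(k)\binom{n}{k}_s = s(n-k+1)\binom{n}{k-1}_s$. Your approach folds all three steps into the single factorisation $(mp+r)!_s = m!_{s_p}\,\widetilde{P}_{m,r}$ and then handles the two residue cases ($n_0 \ge k_0$ versus $n_0 < k_0$) directly. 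Both arguments rest on the same key input, namely \cref{lem: s of a unit mod n is a unit mod s(n)} guaranteeing that $s(j)$ is a unit modulo $s(p)$ for $1 \le j < p$, but yours avoids the nested inductions and makes the structure of the cancellation transparent. One small remark: your displayed identities implicitly assume $n_1 p + n_0 \ge k_1 p + k_0$ (so that $(n-k)!_s$ and the relevant $\widetilde{P}$ are defined); the complementary case, where both sides vanish by the conventions in \cref{rem: arbitrary-integer-binom}, is immediate.
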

\begin{proof}[Proof of \cref{thm: s-Lucas theorem with extra condition}]
This is an immediate consequence of \cref{thm: s-Lucas theorem} and \cref{lem: s_m binom congruent to usual binom mod s(m)}.
\end{proof}
\begin{remark}
\cref{thm: s-Lucas theorem} is just \cref{thm: s-Lucas theorem with extra condition} but with $\binom{n_1}{k_1}$ replaced by $\binom{n_1}{k_1}_{s_p}$, and no requirement that $C_s(a, b)$ be an element of $R$.
\end{remark}
Before we prove \cref{thm: s-Lucas theorem} in full generality, we will prove the special case where $n_0 = k_0 = 0$. Then, we will use this case as a lemma in the proof of the general result.

\begin{proof}[Proof of Theorem \ref{thm: s-Lucas theorem} in the case $n_0 = k_0 = 0$]
Writing out the definition of the $s$-binomial coefficient $\binom{pn}{pk}_s$, we get \[\binom{pn}{pk}_s = \frac{s(pn) s(pn - 1) \cdots s(pn - pk + 1)}{s(pk) s(pk - 1) \cdots s(1)}.\] By \cref{lem: s of a unit mod n is a unit mod s(n)}, $s(pn - j)$ is a unit modulo $s(p)$ for all $j$ not divisible by $p$. Together with \cref{lem: congruence of generalized n-series}, this implies that we can cancel $s(pn - j)$ with $s(pk - j)$ (since they are congruent units modulo $s(p)$). After all of this cancellation, we are left with \[\binom{pn}{pk}_s = \frac{s(pn) s(p(n - 1)) \cdots s(p(n - k + 1))}{s(pk) s(p(k-1)) \cdots s(p)},\] which is just $\binom{n}{k}_{s_p}$. This means that \[\binom{pn}{pk}_s \equiv \binom{n}{k}_{s_p} \pmod{s(p)}.\] Combining this with the congruence of $\binom{n}{k}_{s_p}$ and $\binom{n}{k}$, we get \[\binom{pn}{pk}_s \equiv \binom{n}{k} \pmod{s(p)},\] which is the special case of Theorem \ref{thm: s-Lucas theorem} where $n_0 = k_0 = 0$.
\end{proof}

We will now extend this to any value of $n_0$ in the valid range $0 \le n_0 < p$.

\begin{proof}[Proof of Theorem \ref{thm: s-Lucas theorem} in the case $k_0 = 0$]
We proved \cref{thm: s-Lucas theorem} above for $n_0 = 0$, so let $0 < n_0 < p$. Notice that the definition of $s$-binomial coefficients implies that \[s(n - k) \binom{n}{k}_s = \frac{n!_s}{k!_s (n - k - 1)!_s} = s(n) \binom{n - 1}{k}_s.\] 
Therefore,
\[s(p(n_1 - k) + n_0) \binom{pn_1 + n_0}{pk}_s = s(pn_1 + n_0) \binom{pn_1 + n_0 - 1}{pk}_s.\] Since $n_0$ is a unit modulo $p$, we see that $s(p(n_1 - k) + n_0)$ is a unit modulo $s(p)$. But $s(p(n_1 - k) + n_0)$ is congruent to $s(pn_1 + n_0)$ by Lemma \ref{lem: congruence of generalized n-series}. Since both are units, this implies that
\[\binom{pn_1 + n_0}{pk}_s \equiv \binom{pn_1 + n_0 - 1}{pk}_s \pmod{s(p)}.\] A simple induction proof gives \[\binom{pn_1 + n_0}{pk}_s \equiv \binom{pn_1}{pk}_s \pmod{s(p)}.\] It follows that \[\binom{pn_1 + n_0}{pk}_s \equiv \binom{pn_1}{pk}_s \equiv \binom{n_1}{k} = \binom{n_1}{k} \binom{n_0}{0}_s \pmod{s(p)},\] which is exactly the $s$-Lucas theorem where $k_0 = 0$.
\end{proof}

Finally, we will extend this by induction to any value of $k_0$ between $0$ and $p - 1$.

\begin{proof}[Proof of Theorem \ref{thm: s-Lucas theorem} in full generality]
We will use induction on $k_0$. We already proved the base case $k_0 = 0$ above. Suppose we have shown that the induction hypothesis \[\binom{pn_1 + n_0}{pk_1 + k_0 - 1}_s \equiv \binom{n_1}{k_1} \binom{n_0}{k_0 - 1}_s \pmod{s(p)}\] is true for some $k_0 - 1$ between $0$ and $p - 2$ (or $0 < k_0 < p$). We will show that it is true with $k_0 - 1$ replaced by $k_0$. Notice that for all integers $1 \le k \le n$, \[s(k) \binom{n}{k}_s = \frac{n!_s}{(k - 1)!_s (n - k)!_s} = s(n - k + 1) \binom{n}{k - 1}_s,\] so \[s(pk_1 + k_0) \binom{pn_1 + n_0}{pk_1 + k_0}_s = s(p(n_1 - k_1) + n_0 - k_0 + 1) \binom{pn_1 + n_0}{pk_1 + k_0 - 1}_s.\] Reducing modulo $s(p)$ gives \[s(k_0) \binom{pn_1 + n_0}{pk_1 + k_0}_s \equiv s(n_0 - k_0 + 1) \binom{pn_1 + n_0}{pk_1 + k_0 - 1} \pmod{s(p)}.\] So
\begin{align*}
s(k_0) \binom{pn_1 + n_0}{pk_1 + k_0}_s
&\equiv s(n_0 - k_0 + 1) \binom{pn_1 + n_0}{pk_1 + k_0 - 1} \\
&\equiv \binom{n_1}{k_1} s(n_0 - k_0 + 1) \binom{n_0}{k_0 - 1}_s \\
&= \binom{n_1}{k_1} s(k_0) \binom{n_0}{k_0}_s \pmod{s(p)}.
\end{align*}
Since $0 < k_0 < p$, $s(k_0)$ is a unit modulo $s(p)$, so we can cancel it from both sides, and we get the congruence we wanted. Therefore, the induction proof is complete.
\end{proof}

When the ring $R$ is $\Z$, there is a version of the $s$-Lucas theorem which allows $p$ to be any natural number, not just a prime; see \cref{thm: composite s-Lucas theorem}.

\subsection{The $s$-Vandermonde identity}
\label{sec: the s-Vandermonde identity}

\begin{recall}
The classical Vandermonde identity states that
$$\binom{m + n}{k} = \sum_j \binom{m}{k - j} \binom{n}{j};$$
this admits a $q$-analogue, known as the the $q$-Vandermonde identity: \[\binom{m + n}{k}_q = \sum_j \binom{m}{k - j}_q \binom{n}{j}_q q^{j(m-k+j)}.\]
\end{recall}

To state an $s$-analogue of the Vandermonde identity, we need a definition.
\begin{definition}
\label{def: s-combbinom}
For $m, n, k, j \in \Z$ with $0 \le j \le n$ and $0 \le k - j \le m$, define \[\combbinom{n}{j}{m}{k}_s = \sum_{\substack{I \subseteq (m, m + n] \cap \Z \\ |I| = j}} \left( \prod_{\ell = 1}^j c_s(i_\ell, n - (k - j + \ell))\right),\] 
where $I = \{i_1 < \cdots < i_j\}$.
We also set $\combbinom{n}{j}{m}{k}_s = 0$ for $j < 0$ or $j > n$.
\end{definition}
\begin{remark}
For $s(n) = n$, the product inside the sum in \cref{def: s-combbinom} is $1$, so the sum is just the number of subsets of $(m, m + n] \cap \Z$ of size $j$, which is $\binom{n}{j}$. In that sense, $\combbinom{n}{j}{m}{k}_s$ is an $s$-analogue of the subset-counting definition of $\binom{n}{j}$ which depends on two extra parameters $m$ and $k$. The other type of $s$-binomial coefficient $\binom{n}{j}_s$ is an $s$-analogue of the algebraic definition of $\binom{n}{j}$. 
\end{remark}

\begin{theorem}[$s$-Vandermonde identity]
\label{thm: s-Vandermonde identity}
For all $m, n, k \in \Z_{\ge 0}$ with $k \le m + n$, \[\binom{m + n}{k}_s = \sum_j \binom{m}{k - j}_s \combbinom{n}{j}{m}{k}_s,\] where the sum is taken over all $j \in \Z$ such that $0 \le j \le n$ and $0 \le k - j \le m$.
\end{theorem}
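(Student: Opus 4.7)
The plan is to induct on $n$, using the definition of $\combbinom{n}{j}{m}{k}_s$ in which the $\ell$-th factor of the product is $c_s(i_\ell,\, i_\ell - k + j - \ell)$ (as recorded in the notation table at the start of the paper).

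In the base case $n = 0$, the only valid index is $j = 0$, and $\combbinom{0}{0}{m}{k}_s = 1$ (the empty product indexed by the unique empty subset of $(m,m]\cap\Z$), so the right-hand side collapses to $\binom{m}{k}_s = \binom{m+0}{k}_s$. For the inductive step I first record a ``dual'' form of the $s$-Pascal identity of \cref{thm: s-Pascal identity}:
\[
\binom{N}{K}_s = \binom{N-1}{K}_s + c_s(N,\,N-K)\,\binom{N-1}{K-1}_s.
\]
This follows either by applying \cref{thm: s-Pascal identity} to $\binom{N}{N-K}_s$ (using the symmetry $\binom{N}{K}_s = \binom{N}{N-K}_s$, which is immediate from the factorial formula), or via a short computation in $R[1/s]$ analogous to the one in the proof of \cref{thm: s-Pascal identity}. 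Applying it with $N = m+n$ and $K = k$, then invoking the inductive hypothesis on each of $\binom{m+n-1}{k}_s$ and $\binom{m+n-1}{k-1}_s$ (and re-indexing $j \mapsto j-1$ in the latter sum), one obtains
\[
\binom{m+n}{k}_s = \sum_j \binom{m}{k-j}_s \Bigl[\combbinom{n-1}{j}{m}{k}_s + c_s(m+n,\,m+n-k)\,\combbinom{n-1}{j-1}{m}{k-1}_s\Bigr].
\]

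The identity then reduces to the Pascal-like recursion
\[
\combbinom{n}{j}{m}{k}_s = \combbinom{n-1}{j}{m}{k}_s + c_s(m+n,\,m+n-k)\,\combbinom{n-1}{j-1}{m}{k-1}_s,
\]
which I prove by splitting the sum defining $\combbinom{n}{j}{m}{k}_s$ according to whether $m+n \in I$. Subsets $I$ with $m+n \notin I$ are precisely the size-$j$ subsets of $(m, m+n-1]\cap\Z$, and the corresponding product agrees verbatim with that of $\combbinom{n-1}{j}{m}{k}_s$. For subsets with $m+n \in I$, the factor at $\ell = j$ evaluates to $c_s(m+n,\,m+n-k)$, while the remaining $\ell = 1, \ldots, j-1$ factors, after the simultaneous shifts $j \mapsto j-1$ and $k \mapsto k-1$, reproduce the product defining $\combbinom{n-1}{j-1}{m}{k-1}_s$; this works precisely because the second argument $i_\ell - k + j - \ell$ of each $c_s$ is invariant under simultaneously decreasing $k$ and $j$ by one.

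The main obstacle is purely notational: carefully tracking the index shifts so that the two pieces reassemble into $\combbinom{n-1}{j-1}{m}{k-1}_s$ with the correct second argument inside each $c_s$. Once that Pascal recursion is verified, the inductive step closes immediately, and no deeper input is needed beyond the dual $s$-Pascal identity.
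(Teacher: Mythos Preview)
Your proof is correct and follows essentially the same approach as the paper: induction on $n$, the flipped $s$-Pascal identity applied to $\binom{m+n}{k}_s$, and the Pascal-type recursion for $\combbinom{n}{j}{m}{k}_s$ obtained by splitting the defining sum according to whether $m+n\in I$. The paper separates the recursion into its own lemma (\cref{lem: combbinom s-Pascal identity}) before the main induction, but the content and the key observation about the invariance of $i_\ell - k + j - \ell$ under the simultaneous shift $k\mapsto k-1$, $j\mapsto j-1$ are identical.
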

\begin{question}
As explained in \cite{mathoverflow-q-vandermonde}, the $q$-Vandermonde identity for $\binom{m+n}{k}_q$ can be understood as arising via a motivic cellular decomposition of the Grassmannian $\Gr_k(\cc^{m+n})$. Is there a motivic interpretation of \cref{thm: s-Vandermonde identity}? (A similar question can also be asked for the $s$-analogues of the other combinatorial identities proved elsewhere in this article.)
\end{question}

The proof of \cref{thm: s-Vandermonde identity} requires a preliminary lemma, which can be viewed as an analogue of Pascal's identity for $\combbinom{n}{j}{m}{k}_s$.
\begin{lemma}
\label{lem: combbinom s-Pascal identity}
For all $m, n, k, j \in \Z$ such that $0 \le k - j \le m$, \[\combbinom{n}{j}{m}{k}_s = \combbinom{n - 1}{j}{m}{k}_s + c_s(m + n, m+n - k) \combbinom{n - 1}{j - 1}{m}{k - 1}_s.\]
Notice that this includes the cases $j < 0$ and $j > n$.
\end{lemma}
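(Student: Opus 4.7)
The plan is to prove this lemma by a direct combinatorial partition argument: split the sum defining $\combbinom{n}{j}{m}{k}_s$ according to whether the largest element $m+n$ of the ambient set $(m,m+n] \cap \Z$ belongs to the index subset $I$ or not. This should cleanly produce the two summands on the right-hand side, playing a role analogous to the ``include the last element vs.~exclude it'' split in the usual proof of Pascal's identity.

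More precisely, for the first term, I would note that the subsets $I$ with $m+n \notin I$ are exactly the size-$j$ subsets of $(m, m+n-1]\cap\Z$. The product $\prod_{\ell=1}^j c_s(i_\ell, i_\ell - k + j - \ell)$ over such an $I$ is literally the summand in the definition of $\combbinom{n-1}{j}{m}{k}_s$ (the parameters $k,j$ are unchanged, and the interval merely shrinks by one on the right), so these subsets together contribute exactly $\combbinom{n-1}{j}{m}{k}_s$.

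For the second term, if $m+n \in I$, write $I = I' \sqcup \{m+n\}$ with $I' \subseteq (m,m+n-1]\cap\Z$ and $|I'|=j-1$. The factor corresponding to $\ell = j$ is $c_s(m+n,\, (m+n)-k+j-j) = c_s(m+n,m+n-k)$, which is independent of $I'$ and can be pulled outside the sum. The remaining product over $\ell = 1,\ldots,j-1$ is $\prod_{\ell=1}^{j-1} c_s(i_\ell, i_\ell - k + j - \ell)$, and the key algebraic observation is the identity of exponents $-k + j = -(k-1) + (j-1)$, which rewrites this as $\prod_{\ell=1}^{j-1} c_s(i_\ell, i_\ell - (k-1) + (j-1) - \ell)$, precisely the summand in $\combbinom{n-1}{j-1}{m}{k-1}_s$. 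Summing over $I'$ therefore produces $c_s(m+n,m+n-k)\,\combbinom{n-1}{j-1}{m}{k-1}_s$.

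Adding the two cases yields the lemma. The only real care required is verifying the boundary cases flagged in the statement: when $j=0$, there are no subsets containing $m+n$, so the second term vanishes by the convention $\combbinom{n-1}{-1}{m}{k-1}_s = 0$, and the first term matches tautologically; when $j = n$, the only subset is $(m,m+n]$ itself, which must contain $m+n$, so $\combbinom{n-1}{n}{m}{k}_s=0$ and only the second summand survives; and for $j<0$ or $j>n$ both sides are zero by convention. I do not foresee any serious obstacle here: this is a bookkeeping identity, and once one writes $-k+j = -(k-1)+(j-1)$ to align the second arguments of the $c_s$ factors, everything falls into place.
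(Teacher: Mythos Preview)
Your proposal is correct and follows essentially the same approach as the paper: both split the defining sum according to whether $m+n$ lies in $I$, pull out the $\ell=j$ factor $c_s(m+n,m+n-k)$ in the ``$m+n\in I$'' case, and use the reindexing $-k+j=-(k-1)+(j-1)$ to identify the remaining product with the summand in $\combbinom{n-1}{j-1}{m}{k-1}_s$. Your explicit handling of the boundary cases $j=0$, $j=n$, $j<0$, $j>n$ is a minor addition over the paper's more terse treatment.
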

\begin{proof}
We will split up the sum in the definition of $\combbinom{n}{j}{m}{k}_s$ into two sums, based on whether $I$ contains $m + n$ or not. If $m + n \notin I$, then $I$ ranges over all $j$-element subsets of $(m, m + n - 1] \cap \Z$. If $m + n \in I$, then we remove it, and we get a $(j - 1)$-element subset of $(m, m + n - 1] \cap \Z$. Then, we have to pull out the $\ell = j$ term of the product, since $i_j = m + n$. So we have
\begin{align*}
\combbinom{n}{j}{m}{k}_s
&= \sum_{\substack{I \subseteq (m, m + n] \cap \Z \\ |I| = j}} \left( \prod_{\ell = 1}^j c_s(i_\ell, i_\ell - (k - j + \ell))\right) \\
&= \sum_{\substack{I \subseteq (m, m + n] \cap \Z \\ |I| = j \\ m + n \in I}} \left( \prod_{\ell = 1}^j c_s(i_\ell, i_\ell - (k - j + \ell))\right) + \sum_{\substack{I \subseteq (m, m + n - 1] \cap \Z \\ |I| = j}} \left( \prod_{\ell = 1}^j c_s(i_\ell, i_\ell - (k - j + \ell))\right) \\
&= \sum_{\substack{I \subseteq (m, m + n - 1] \cap \Z \\ |I| = j - 1}} c_s(m + n, m+n - (k - j + j)) \left( \prod_{\ell = 1}^{j - 1} c_s(i_\ell, i_\ell - (k - j + \ell))\right) + \combbinom{n - 1}{j}{m}{k}_s \\
&= c_s(m + n, m+n - k) \combbinom{n - 1}{j - 1}{m}{k - 1}_s + \combbinom{n - 1}{j}{m}{k}_s.
\end{align*}
Rearranging this completes the proof.
\end{proof}

\begin{proof}[Proof of \cref{thm: s-Vandermonde identity}]
We will prove this by induction on $n$. For the base case, observe that if $n = 0$, then all $j$ in the range of summation satisfy $0 \le j \le 0$, so either the sum is empty (if $k - 0 > m$) or its only term is $j = 0$ (if $k - 0 \le m$). The conditions of the theorem force $k \le m + n = m$, so the sum has exactly one term: \[\sum_j \binom{m}{k - j}_s \combbinom{n}{j}{m}{k}_s = \binom{m}{k}_s \combbinom{0}{0}{m}{k}_s.\] We want to show that this is $\binom{m}{k}_s$. The definition of $\combbinom{0}{0}{m}{k}_s$ gives the rather degenerate identity \[\combbinom{0}{0}{m}{k}_s = \sum_{\substack{I \subseteq (m, m] \cap \Z \\ |I| = 0}} \left( \prod_{\ell = 1}^0 c_s(i_\ell, i_\ell - (k + \ell)) \right) = \sum_{I \subseteq \emptyset} 1 = 1,\] which completes the base case.

For the inductive step, suppose that the theorem is true with $n$ replaced by $n - 1$. We will first apply a version of the $s$-Pascal identity to $\binom{m + n}{k}_s$ that has been ``flipped'' using the identity $\binom{n}{k}_s = \binom{n}{n - k}_s$:
\begin{align*}
\binom{m + n}{k}_s
&= \binom{m + n}{m + n - k}_s \\
&= \binom{m + n - 1}{m + n - k - 1}_s + c_s(m + n, m + n - k) \binom{m + n - 1}{m + n - k}_s \\
&= \binom{m + n - 1}{k}_s + c_s(m + n, m+n - k) \binom{m + n - 1}{k - 1}_s.
\end{align*}
The inductive hypothesis gives:
\begin{align*}
& \binom{m + n - 1}{k}_s + c_s(m + n, m+n - k) \binom{m + n - 1}{k - 1}_s \\
&= \sum_j \binom{m}{k - j}_s \combbinom{n - 1}{j}{m}{k}_s + c_s(m+n, m+n - k) \sum_j \binom{m}{k - 1 - j}_s \combbinom{n - 1}{j}{m}{k - 1}_s \\
&= \sum_j \binom{m}{k - j}_s \combbinom{n - 1}{j}{m}{k}_s + c_s(m+n, m+n - k) \sum_j \binom{m}{k - j}_s \combbinom{n - 1}{j - 1}{m}{k - 1}_s \\
&= \sum_j \binom{m}{k - j}_s \left( \combbinom{n - 1}{j}{m}{k}_s + c_s(m+n, m+n - k) \combbinom{n - 1}{j - 1}{m}{k - 1}_s \right).
\end{align*}
By Lemma \ref{lem: combbinom s-Pascal identity}, this is equal to \[\sum_j \binom{m}{k - j}_s \combbinom{n}{j}{m}{k}_s,\] which completes the induction.
\end{proof}

\newpage
\section{Generalized $n$-Series Over $\Z$}
\label{sec: integer generalized n-series}

\subsection{Lexicographically small nonnegative integer generalized $n$-series}
\label{sec: small generalized n-series}

When doing computations with generalized $n$-series, it is useful to have some examples over $\Z$ that are lexicographically small (close to $0$ for small values of $n$). We will restrict to integer GNS that are always nonnegative. We have already seen some examples of relatively small integer generalized $n$-series: $s(n) = n$ and $s(n) = [n]_q$. Before looking at an algorithm for constructing lexicographically small GNS, we need a definition:

\begin{definition}
Let $R$ be a ring, and let $N \in \Z_{\ge 0}$. A \textit{partial generalized $n$-series} is a function $s : \{0, 1, \dots, N\} \to R$ such that $s(0) = 0$, $s(n)$ is not a zero-divisor for $0 < n \le N$, and for all $0 \le k \le n \le N$, $s(n - k) \mid s(n) - s(k)$. An \textit{extension} of a partial GNS $s$ is a GNS which agrees with $s$ on the domain of $s$.
\end{definition}

Given a partial GNS \[s : \{0, 1, \dots, N\} \to \Z_{\ge 0}\] we can use a greedy algorithm to construct the lexicographically smallest nonnegative GNS $\tilde s$ which is an extension of $s$.

\begin{definition}
\label{def: extension of partial generalized n-series}
Suppose we are given a partial GNS $s : \{0, 1, \dots, N\} \to \Z_{\ge 0}$. Define a function $\tilde s : \Z_{\ge 0} \to \Z_{\ge 0}$ in the following way: $\tilde s(n) = s(n)$ for $0 \le n \le N$, and for each $n > N$ we define $\tilde s(n)$ in terms of the previous values of $\tilde s(k)$ to be the smallest positive integer that makes the restriction $\tilde s|_{\{0, 1, \dots, n\}}$ a partial GNS.
\end{definition}

The fact that $\tilde s$ is well-defined is nontrivial. We have to show that at each step of the algorithm, $\tilde s(n)$ exists. We will actually prove a stronger theorem:

\begin{theorem}
\label{thm: recursive construction of generalized n-series}
Consider the following recursive construction of an arbitrary nonnegative GNS over $\Z$: Start with $s(0) = 0$, and for each $N$ starting with $1$ in increasing order, choose $s(N)$ to be an \emph{arbitrary} integer such that $s|_{\{0, 1, \dots, N\}}$ is a partial GNS. No matter what choices are made, it is always possible to continue (there are never any contradictions).
\end{theorem}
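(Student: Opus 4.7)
The plan is to recast the problem as a simultaneous congruence problem and solve it via the generalized Chinese Remainder Theorem (CRT), using the number-theoretic lemmas of \cref{sec: number-theoretical properties of GNS}. Suppose we have constructed a partial GNS $s$ on $\{0,1,\ldots,N-1\}$ for some $N \geq 1$. To extend $s$ to $N$, I need to exhibit a positive integer $s(N)$ such that $s(N-k) \mid s(N) - s(k)$ for every $1 \leq k \leq N-1$; the condition $s(N-0) \mid s(N) - s(0)$ is automatic since $s(0)=0$. Re-indexing by $j := N-k$, the required conditions are exactly
\[
s(N) \equiv s(N-j) \pmod{s(j)}, \qquad 1 \leq j \leq N-1.
\]

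First, I would verify the generalized CRT consistency condition: for any $1 \leq i,j \leq N-1$,
\[
s(N-i) \equiv s(N-j) \pmod{\gcd(s(i),s(j))}.
\]
Let $d = \gcd(i,j)$. By \cref{lem: s preserves gcd} we have the ideal equality $(s(i),s(j)) = (s(d))$, so $\gcd(s(i),s(j)) = s(d)$ up to sign. Since $d \mid i$ and $d \mid j$, we get $N-i \equiv N \equiv N-j \pmod{d}$, and then \cref{lem: congruence of generalized n-series} yields $s(N-i) \equiv s(N-j) \pmod{s(d)}$, as required.

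Next, the generalized CRT (valid for arbitrary moduli in $\Z$ given pairwise consistency) implies that the system has a solution modulo $M := \lcm(s(1),\ldots,s(N-1))$, and hence has infinitely many positive integer solutions differing by multiples of $M$. Choosing any such positive solution for $s(N)$ produces an extension of $s$ to a partial GNS on $\{0,1,\ldots,N\}$, since the divisibility condition $s(N-k) \mid s(N) - s(k)$ is exactly what we arranged, and $s(N) > 0$ ensures it is not a zero-divisor. Iterating this argument gives the theorem, and simultaneously shows that \cref{def: extension of partial generalized n-series} is well-defined (taking the minimal positive solution at each stage).

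The only nontrivial step is the consistency check for generalized CRT, and this is handled immediately by \cref{lem: s preserves gcd} and \cref{lem: congruence of generalized n-series}; no further ingredient is needed.
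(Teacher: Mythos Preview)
Your proof is correct and follows essentially the same approach as the paper: reduce to a system of congruences, verify pairwise consistency via \cref{lem: s preserves gcd} and \cref{lem: congruence of generalized n-series}, and invoke the generalized CRT. Your version is in fact slightly more explicit about why a \emph{positive} solution exists (hence a non-zero-divisor), which the paper leaves implicit.
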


\begin{proof}
Suppose we have a partial GNS $s$ with domain $\{0, 1, \dots, N - 1\}$, and we want to choose $s(N)$. We have to show that there exists $s(N)$ such that for all $0 < k < N$, $s(N - k) \mid s(N) - s(k)$. This is equivalent to $s(N) \equiv s(k) \pmod{s(N - k)}$, so we really have a system of linear congruences. This system has a solution by the generalized Chinese Remainder Theorem as long as no two congruences contradict each other. We want to show that if we reduce two of the congruences $s(N) \equiv s(N - k) \pmod{s(k)}$ and $s(N) \equiv s(N - j) \pmod{s(j)}$ modulo $\gcd(s(k), s(j))$, they become the same congruence. That is, we want to show that \[s(N - k) \equiv s(N - j) \pmod{\gcd(s(k), s(j))}.\] By \cref{lem: s preserves gcd}, $\gcd(s(k), s(j)) = \pm s(\gcd(k, j))$. And since \[N - k \equiv N \equiv N - j \pmod{\gcd(k, j)},\] the desired congruence must be true by \cref{lem: congruence of generalized n-series}.
\end{proof}

We have seen that many number-theoretic properties of the positive integers remain true for arbitrary GNS. However, there are some important differences between $s(n) = n$ and other GNS. For example, if $n = p^j m$ with $p\nmid m$, then $n/p^j$ is a unit modulo $p$. This is generally false for other generalized $n$-series; using \cref{thm: recursive construction of generalized n-series}, we can construct a counterexample.
\begin{example}
Let $s$ be an extension of the partial GNS with domain $\{0, 1, \dots, 6\}$ whose values are $0, 1, 2, 3, 10, 11, 12$. One can check manually that this forms a partial GNS. For this GNS, \[\frac{s(6)}{s(2)} = \frac{12}{2} = 6 \equiv 0 \pmod{s(2)}.\]
\end{example}

\subsection{An upper bound on lexicographically small nonnegative integer GNS}
\label{sec: upper bound}

In this section, we will write partial generalized $n$-series as lists of numbers, where the first item in the list is $s(0)$. For example:
\begin{example}
Define a partial generalized $n$-series $0, 1, 3$ via the function $s : \{0, 1, 2\} \to \Z_{\ge 0}$ given by $0 \mapsto 0$, $1 \mapsto 1$, $2 \mapsto 3$. If we apply the algorithm of \cref{def: extension of partial generalized n-series} to the partial generalized $n$-series $0, 1, k$ where $k \in \Z_{> 0}$, we get a generalized $n$-series $0, 1, k, 1, k, \dots$, where the $n$-th term is $0$ if $n = 0$, $1$ if $n$ is odd, and $k$ if $n$ is even and nonzero. If we start with a longer partial generalized $n$-series $s$, say $0, 1, 3, 4$, then the generalized $n$-series $\tilde s$ starts with \[0, 1, 3, 4, 9, 19, 552, 22081,\] and the next few terms are \[219440979, 2669857856653708, 6558922971496604200448626056129.\] This seems to grow very fast when $n$ increases, almost doubling the number of digits each term. As we will see, $\tilde s(n)$ is bounded by
\begin{align*}
\tilde s(n) &< 12^{2^{n-4}}, \qquad n \ge 4, \text{ and} \\
\tilde s(n) &= \Omega_a(a^n), \qquad \forall a \ge 0.
\end{align*}

The upper bound (a special case of \cref{thm: upper bound on lexicographically small GNS}) is proved below, and the lower bound (a special case of \cref{thm: lower bound on GNS over Z}) is proved in the next subsection.
\end{example}
Let us first show that $\tilde s$ is strictly increasing.
\begin{lemma}
\label{lem: strictly increasing s implies strictly increasing s-tilde}
Let $N \ge 3$, and let $s : \{0, 1, \dots, N\} \to \Z_{\ge 0}$ be a strictly increasing partial generalized $n$-series. Then, $\tilde s$ is also strictly increasing.
\end{lemma}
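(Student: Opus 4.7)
The plan is to prove this by strong induction on $n$, with the base case covered by the hypothesis that $s$ is strictly increasing on $\{0,1,\ldots,N\}$. Since $\tilde{s}$ agrees with $s$ on this range, the claim holds for $n \le N$; we need to establish $\tilde{s}(n) > \tilde{s}(n-1)$ for all $n \ge N+1 \ge 4$, assuming inductively that $\tilde{s}$ is strictly increasing on $\{0,1,\ldots,n-1\}$.

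The key idea is to assume for contradiction that $\tilde{s}(n) \le \tilde{s}(n-1)$ and then squeeze the possible values of $t := \tilde{s}(n)$ using the divisibility constraints for $k=1$ and $k=2$ successively. First, the condition $\tilde{s}(n-1) \mid t - \tilde{s}(1)$ combined with $1 \le t \le \tilde{s}(n-1)$ and $\tilde{s}(n-1) > \tilde{s}(1) \ge 1$ (strict increase on the previous range) forces $|t - \tilde{s}(1)| < \tilde{s}(n-1)$, so the only multiple of $\tilde{s}(n-1)$ available is $0$, giving $t = \tilde{s}(1)$.

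Next, with $t = \tilde{s}(1)$, examine the constraint $\tilde{s}(n-2) \mid t - \tilde{s}(2) = \tilde{s}(1) - \tilde{s}(2)$. The right-hand side is a nonzero integer with absolute value $\tilde{s}(2) - \tilde{s}(1) < \tilde{s}(2)$, while the left-hand side satisfies $\tilde{s}(n-2) \ge \tilde{s}(2)$ because $n-2 \ge 2$ and $\tilde{s}$ is strictly increasing on $\{0,\ldots,n-1\}$. Hence $\tilde{s}(n-2)$ cannot divide the nonzero quantity $\tilde{s}(1) - \tilde{s}(2)$, which contradicts the defining property of $\tilde{s}(n)$. (The existence of a valid $\tilde{s}(n)$ satisfying all constraints is guaranteed by \cref{thm: recursive construction of generalized n-series}, so we only need to rule out small values.)

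I do not anticipate a serious obstacle: the estimate $|t - \tilde{s}(1)| < \tilde{s}(n-1)$ is the pivot, and it works precisely because $\tilde{s}(n-1)$ strictly exceeds $\tilde{s}(1)$. The hypothesis $N \ge 3$ (and hence $n \ge 4$) is exactly what is needed so that $\tilde{s}(n-2) \ge \tilde{s}(2)$, and this is the only role of the numerical hypothesis; a weaker hypothesis $N \ge 2$ would fail because $n-2 = 1$ could happen, and indeed the example in the remark following the lemma (where $s(0),s(1),s(2)=0,1,3$ extends with $\tilde{s}(3)=1$) shows the conclusion genuinely breaks for $N = 2$.
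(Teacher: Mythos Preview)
Your proposal is correct and takes essentially the same approach as the paper: both argue that the $k=1$ congruence forces $\tilde s(n)=\tilde s(1)$ if $\tilde s(n)\le \tilde s(n-1)$, and then the $k=2$ congruence together with $\tilde s(n-2)\ge \tilde s(2)>\tilde s(1)$ rules this out. You make the strong induction (needed for $\tilde s(n-2)\ge \tilde s(2)$) explicit, whereas the paper leaves it implicit.
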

\begin{remark}
The condition $N \ge 3$ is important, because we already saw that if $s = 0, 1, 3$ (so $N = 2$) then $\tilde s = 0, 1, 3, 1, 3, \dots$, which is not strictly increasing.
\end{remark}
\begin{proof}
Notice that $s(2) > s(1) \ge 1$. Let $n > N$. We will show that $\tilde s(n) > \tilde s(n - 1)$. By \cref{lem: congruence of generalized n-series}, $\tilde s(n) \equiv s(1) \pmod{s(n - 1)}$, so either $\tilde s(n) = s(1)$ or $\tilde s(n) > s(n - 1)$. The first case cannot happen because $\tilde s(n) \equiv s(2) \pmod{\tilde s(n - 2)}$, and $\tilde s(n - 2) \ge s(2) > s(1)$. Therefore, $\tilde s(n) > \tilde s(n - 1)$.
\end{proof}

Given the value of $\tilde s(k)$ for all $1 \le k < n$, there is an upper bound on $\tilde s(n)$, as long as $\tilde s$ is strictly increasing:

\begin{lemma}
\label{lem: s-tilde(n) < lcm(previous s-tilde(k)'s)}
Let $s : \{0, 1, \dots, N\} \to \Z$ be a strictly increasing partial generalized $n$-series with $N \ge 3$. Then, for all $n > N$, \[\tilde s(n) < \lcm\{\tilde s(k) \mid 0 < k < n\}.\]
\end{lemma}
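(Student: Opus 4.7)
The plan is to combine the definition of $\tilde s$ as a greedy minimum with the Chinese Remainder Theorem, and then rule out equality using \cref{lem: strictly increasing s implies strictly increasing s-tilde}.

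First I would recall that by \cref{def: extension of partial generalized n-series}, $\tilde s(n)$ is the smallest positive integer satisfying the system of congruences
\[
\tilde s(n) \equiv \tilde s(k) \pmod{\tilde s(n-k)}, \qquad 0 < k < n.
\]
By the argument in the proof of \cref{thm: recursive construction of generalized n-series}, this system is consistent (the pairwise compatibility conditions hold via \cref{lem: s preserves gcd} and \cref{lem: congruence of generalized n-series}), so by the generalized Chinese Remainder Theorem its solution set is a single residue class modulo $L := \lcm\{\tilde s(k) \mid 0 < k < n\}$. Consequently the smallest positive solution is at most $L$, so $\tilde s(n) \le L$.

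Next I would argue that equality cannot occur. Suppose for contradiction that $\tilde s(n) = L$. Taking $k=1$ in the system of congruences gives
\[
L = \tilde s(n) \equiv \tilde s(1) \pmod{\tilde s(n-1)}.
\]
Since $\tilde s(n-1)$ divides $L$ by definition of $L$, this forces $\tilde s(n-1) \mid \tilde s(1)$. By \cref{lem: strictly increasing s implies strictly increasing s-tilde}, $\tilde s$ is strictly increasing; since $n > N \ge 3$ we have $n-1 \ge 3 > 1$, so
\[
\tilde s(n-1) > \tilde s(1) = s(1) \ge 1 > 0.
\]
A positive divisor of a positive integer is at most that integer, so $\tilde s(n-1) \mid \tilde s(1)$ is inconsistent with $\tilde s(n-1) > \tilde s(1) > 0$. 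This contradiction yields $\tilde s(n) < L$, as required.

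The only mildly delicate step is the CRT consistency, but this is already packaged by the existence part of \cref{thm: recursive construction of generalized n-series}; everything else is a direct application of the greedy minimality of $\tilde s(n)$ and the strict monotonicity statement from \cref{lem: strictly increasing s implies strictly increasing s-tilde}. I do not anticipate any significant obstacle.
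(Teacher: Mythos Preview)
Your proposal is correct and follows essentially the same approach as the paper's proof: both use the generalized CRT (via \cref{thm: recursive construction of generalized n-series}) to get $\tilde s(n)\le L$, then rule out equality by taking the congruence modulo $\tilde s(n-1)$ and invoking \cref{lem: strictly increasing s implies strictly increasing s-tilde} to derive the contradiction $\tilde s(n-1)\mid \tilde s(1)$ with $\tilde s(n-1)>\tilde s(1)>0$.
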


\begin{proof}
Recall that the proof of the existence of $\tilde s$ (\cref{thm: recursive construction of generalized n-series}) constructs $\tilde s(n)$ from $\tilde s(k)$ for all $0 < k < n$ using the generalized Chinese Remainder Theorem. The congruences are $x \equiv \tilde s(n - k) \pmod{\tilde s(k)}$ for each $0 < k < n$, so the generalized CRT proves the existence of a unique solution $x$ modulo $L = \lcm\{\tilde s(k) \mid 0 < k < n\}$. Therefore, there exists a solution to the system of congruences with $0 < x \le L$. The smallest positive solution is $\tilde s(n)$ by \cref{def: extension of partial generalized n-series}, so $\tilde s(n) \le L$. We just have to show that $\tilde s(n) \ne L$.

Suppose for contradiction that $\tilde s(n) = L$. Then, by \cref{lem: congruence of generalized n-series}, $L \equiv \tilde s(1) \pmod{\tilde s(n - 1)}$. But by the definition of $L$, $L$ is divisible by $\tilde s(n - 1)$, so $\tilde s(1)$ is also divisible by $\tilde s(n - 1)$. Since $n - 1 > N - 1 > 1$, $\tilde s(1) < \tilde s(n - 1)$ by \cref{lem: strictly increasing s implies strictly increasing s-tilde}, which is a contradiction.
\end{proof}

Using the fact that the LCM of a set is at most its product, we can prove the following upper bound:

\begin{theorem}
\label{thm: upper bound on lexicographically small GNS}
Let $s : \{0, 1, \dots, N\} \to \Z$ be a strictly increasing partial generalized $n$-series with $N \ge 3$. Then, for all $n > N$, \[\tilde s(n) < \Pi^{2^{n - (N + 1)}},\] where $\Pi = \prod_{k = 1}^N s(k)$.
\end{theorem}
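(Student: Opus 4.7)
The plan is to combine \cref{lem: s-tilde(n) < lcm(previous s-tilde(k)'s)} with the trivial estimate $\lcm(A) \le \prod_{a \in A} a$ and then run an induction on $n$. Specifically, for any $n > N$ we have
\[
\tilde s(n) < \lcm\{\tilde s(k) : 0 < k < n\} \le \prod_{k=1}^{n-1} \tilde s(k),
\]
and since all the quantities involved are positive integers, a strict inequality in one factor of a product of positive integers gives a strict inequality in the product.

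The base case is $n = N+1$, where the bound above reads $\tilde s(N+1) < \prod_{k=1}^{N} s(k) = \Pi = \Pi^{2^{0}}$, matching the claim since $2^{(N+1)-(N+1)} = 1$.

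For the inductive step, assume the bound $\tilde s(k) < \Pi^{2^{k-(N+1)}}$ holds for all $N < k < n$. Splitting the product at $k = N$, I would write
\[
\tilde s(n) < \prod_{k=1}^{n-1} \tilde s(k) = \Pi \cdot \prod_{k=N+1}^{n-1} \tilde s(k) < \Pi \cdot \prod_{k=N+1}^{n-1} \Pi^{2^{k-(N+1)}} = \Pi^{1 + \sum_{j=0}^{n-N-2} 2^{j}},
\]
and then observe that the geometric sum in the exponent telescopes as $1 + (2^{n-N-1} - 1) = 2^{n-N-1}$, yielding exactly $\tilde s(n) < \Pi^{2^{n-(N+1)}}$.

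There is no real obstacle here: the only things to be a bit careful about are (i) that the hypothesis $N \ge 3$ is what makes \cref{lem: strictly increasing s implies strictly increasing s-tilde} applicable, so that \cref{lem: s-tilde(n) < lcm(previous s-tilde(k)'s)} can be invoked at every step of the induction (ensuring that $\tilde s$ stays strictly increasing and positive), and (ii) that the strict inequality $\tilde s(n) < \lcm(\cdots)$ is the source of the strict inequality in the final bound---the passage $\lcm \le \prod$ alone would only be weak. The arithmetic of the exponents is the only real content beyond these observations.
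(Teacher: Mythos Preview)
Your proof is correct and follows essentially the same route as the paper: apply \cref{lem: s-tilde(n) < lcm(previous s-tilde(k)'s)}, bound the $\lcm$ by the product, split off the first $N$ factors as $\Pi$, and use a geometric sum in the exponent. The only cosmetic difference is that you isolate $n=N+1$ as an explicit base case, while the paper absorbs it into a single strong-induction step (where the second product is empty); both are fine.
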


\begin{proof}
We will use strong induction. Suppose that $n > N$ and for all $k$ strictly between $N$ and $n$, \[\tilde s(k) < \Pi^{2^{k - (N + 1)}}.\] By \cref{lem: s-tilde(n) < lcm(previous s-tilde(k)'s)} and the inductive hypothesis,
\begin{align*}
\tilde s(n)
&< \lcm\{\tilde s(k) \mid 0 < k < n\} \le \prod_{k = 1}^{n - 1} \tilde s(k) \\
&\le \left(\prod_{k = 1}^N s(k)\right) \left(\prod_{k = N + 1}^{n - 1} \Pi^{2^{k - (N + 1)}}\right) \\
&= \Pi \cdot \prod_{k = 0}^{n - (N + 2)} \Pi^{2^k} = \Pi \cdot \Pi^{\left(\sum_{k = 0}^{n - (N + 2)} 2^k\right)} \\
&= \Pi \cdot \Pi^{2^{n - (N + 1)} - 1} = \Pi^{2^{n - (N + 1)}}.
\end{align*}
This completes the induction.
\end{proof}

\begin{remark}
Another way to think about this theorem is that the recursively defined sequence \[a_n = \begin{cases} s(n) &\text{if } 0 \le n \le N, \\ \prod_{k = 1}^{n - 1} a_k &\text{if } n > N \end{cases}\] is exactly equal to $\Pi^{2^{n - (N + 1)}}$ for all $n > N$, and we know that $\tilde s(n)$ satisfies this recurrence but with the equality replaced by $<$ whenever $n > N$, so it should be true that $\tilde s(n) < a_n$ for all $n > N$.
\end{remark}

\subsection{A lower bound on strictly increasing integer GNS}
\label{sec: lower bound}

We proved an upper bound on the lexicographically smallest extension of the partial GNS $0, 1, 3, 4$ in the previous section, but we also stated a lower bound. The purpose of this subsection is to prove this lower bound by proving a more general result.

\begin{theorem}
\label{thm: lower bound on GNS over Z}
Let $s(n)$ be a strictly increasing generalized $n$-series over $\Z$ that is not a scalar multiple of $n \mapsto [n]_q$ for any $q \in \Z_{> 0}$. Then, $s(n) = \Omega_a(a^n)$ for all $a \ge 0$.
\end{theorem}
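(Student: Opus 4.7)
The plan is to prove the contrapositive: assume $s(n) = O(a^n)$ for some $a \geq 1$, and show $s$ is a scalar multiple of $n \mapsto [n]_q$ for some $q \in \Z_{\geq 1}$. Since $s(n-1) \mid s(n) - s(1)$, induction gives $s(1) \mid s(n)$ for all $n$; after dividing by $s(1)$ we may assume $s(1) = 1$. The quotients $q_n := (s(n)-1)/s(n-1) \in \Z_{\geq 1}$ are then well-defined for $n \geq 2$; set $q := q_2$. The target reduces to showing $q_n = q$ for all $n \geq 2$, which is equivalent to $s(n) = [n]_q$.

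The argument rests on two arithmetic inputs. (i)~For $n \geq 3$, the divisibility $s(n-2) \mid s(n) - s(2)$ combines with $s(n-1) \equiv 1 \pmod{s(n-2)}$ and the coprimality $\gcd(s(n-1), s(n-2)) = 1$ (from $s(n-1) = q_{n-1} s(n-2) + 1$) to yield the congruence $q_n \equiv q \pmod{s(n-2)}$. For $n \geq 4$ we have $s(n-2) \geq s(2) = q+1 > q$, so either $q_n = q$ or $q_n \geq q + s(n-2)$; the latter case produces a ``jump estimate'' $s(n) \geq s(n-1) s(n-2)$. (ii)~Whenever $q_m = q$ holds for every $m$ in an interval $[M, M']$ with $M' \geq 2M$, the explicit stable formula $s(n) = q^{n-M} s(M) + [n-M]_q$ plus the GNS divisibility $s(M) \mid s(2M) - s(M)$—using $\gcd(s(M), q) = 1$—yields the ``stable divisibility'' $s(M) \mid [M]_q$.

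Set $E := \{n \geq 4 : q_n \neq q\}$; the goal is $E = \emptyset$ and $q_3 = q$. If $|E| = \infty$, enumerate $E = \{n_1 < n_2 < \cdots\}$. The stable divisibility forbids $n_{i+1} > 2 n_i$, because then (ii) with $M = n_i$ would give $s(n_i) \mid [n_i]_q$, contradicting $s(n_i) \geq s(n_i - 1) s(n_i - 2) > [n_i]_q$ (from the jump at $n_i$ and the elementary lower bound $s(m) \geq q^{m-3}(q+2)$). Hence $n_{i+1} \leq 2 n_i$, and combining the jump estimate at $n_{i+1}$ with the stable recursion on $(n_i, n_{i+1})$ gives $\log s(n_{i+1}) \geq 2(n_{i+1} - n_i) \log q + 2 \log s(n_i) - O(1)$; dividing by $n_{i+1}$ and iterating, together with the spacing bound, forces $\log s(n)/n \to \infty$ along all $n$, contradicting $s(n) = O(a^n)$. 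So $|E| < \infty$. If $E \neq \emptyset$, set $N := \max E$; then (ii) applied to $(N, \infty)$ yields $s(N) \mid [N]_q$, while the jump at $N$ gives $s(N) \geq s(N-1)s(N-2) + 1 > [N]_q$ for all $N \geq 4$ (the borderline cases $N \in \{4, 5\}$ checked directly), a contradiction. Hence $E = \emptyset$. Finally, an analogous analysis at $n = 3$ gives $s(3) \mid [3]_q$; combining with $s(3) - [3]_q = (q_3 - q)(q+1)$ and $\gcd(s(3), q+1) = 1$ yields $s(3) \mid q_3 - q$, and size comparison forces $q_3 = q$. Therefore $s(n) = [n]_q$.

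The main obstacle is establishing the super-exponential conclusion when $|E| = \infty$ rigorously along all $n$, not merely along the subsequence $(n_i)$ of jumps. The spacing control $n_{i+1} \leq 2 n_i$ is essential: without it the quadratic jumps could be arbitrarily sparse, and the intermediate stable exponential growth at rate $q$ would leave a subsequence of $n$ where $s(n)$ is only exponentially large. Proving this spacing bound requires the stable divisibility lemma (ii), so the jump estimate, the stable recursion, and the spacing bound are tightly interlocked; correctly combining them into a single clean super-exponential conclusion is the technical heart of the proof.
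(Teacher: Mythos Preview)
Your approach is quite different from the paper's and has a genuine gap in the ``$E$ finite and nonempty'' case. You claim the jump at $N=\max E$ gives $s(N)\ge s(N-1)s(N-2)+1>[N]_q$, justified by the bound $s(m)\ge q^{m-3}(q+2)$. But that bound only yields $s(N-1)s(N-2)\gtrsim q^{2N-9}(q+2)^2$, which does not dominate $[N]_q\sim q^{N-1}$ for small $N$: e.g.\ with $q=3$ and the minimal value $s(3)=q+2=5$ (so $q_3=1$, $s(4)=16$), one gets $s(3)s(4)=80<121=[5]_3$. Your ``borderline cases $N\in\{4,5\}$ checked directly'' cannot be carried out uniformly, since whether the inequality holds depends on the unconstrained value $q_3\ge 1$. (In concrete examples the configuration $N=\max E$ small is actually ruled out by divisibility and integrality constraints on $q_N$, not by a size inequality; your argument does not capture this.) The same issue recurs in the spacing bound for infinite $E$, where you again invoke $s(n_i-1)s(n_i-2)>[n_i]_q$ to forbid $n_{i+1}>2n_i$.

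For comparison, the paper's proof sidesteps all of this with a single short lemma: for each fixed integer $b\ge 0$, one has $s(n+1)\ne b\,s(n)+s(1)$ for all large $n$. Proof: choose $k$ with $s(k+1)\ne b\,s(k)+s(1)$ (such $k$ exists since $s$ is not $s(1)[\cdot]_b$); if $s(n+1)=b\,s(n)+s(1)$ for some large $n$, reduce modulo $s(n-k)$ to get $s(k+1)\equiv b\,s(k)+s(1)\pmod{s(n-k)}$, and once $s(n-k)$ exceeds both sides this is an equality, contradiction. Since $s(n+1)\equiv s(1)\pmod{s(n)}$ always gives $s(n+1)=b\,s(n)+s(1)$ for some $b\ge 1$, applying the lemma to every $b<a$ forces $s(n+1)\ge a\,s(n)+s(1)$ for large $n$, hence $s(n)=\Omega(a^n)$. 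This ``reduce a witness modulo a growing modulus'' trick replaces your entire jump/stable/spacing machinery in two lines.
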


In this theorem statement, we use the convention that $[n]_1 = n$. To prove \cref{thm: lower bound on GNS over Z}, we will use the following lemma:

\begin{lemma}
\label{lem: lower bound on GNS over Z lemma}
Let $s$ satisfy the conditions of \cref{thm: lower bound on GNS over Z}, and fix a nonnegative integer $a$. Then, for all sufficiently large $n$, \[s(n + 1) \ne a s(n) + s(1).\]
\end{lemma}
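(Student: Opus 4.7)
The plan is to argue by contradiction: suppose the set $S_a := \{n \ge 1 : s(n+1) = a \cdot s(n) + s(1)\}$ is infinite, and deduce that $s(k) = [k]_a \cdot s(1)$ for every $k \ge 1$. Once this is established, the case $a \ge 1$ contradicts the standing hypothesis that $s$ is not a scalar multiple of $n \mapsto [n]_q$ for any $q \in \Z_{>0}$, while the case $a = 0$ is ruled out directly.

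The case $a = 0$ is immediate from strict monotonicity: $s(n+1) > s(1) = 0 \cdot s(n) + s(1)$ for every $n \ge 1$, so $S_0$ is empty. For $a \ge 1$, I would prove the identity $s(k) = [k]_a \cdot s(1)$ by induction on $k$, the base case $k = 1$ being tautological. For the inductive step, assume $s(j) = [j]_a \cdot s(1)$ for all $j \le k$. Since $S_a$ is infinite, one can pick $m \in S_a$ with $m \ge k+1$ and apply the GNS divisibility axiom to the index pairs $(m+1, k+1)$ and $(m, k)$, obtaining
$$s(m - k) \mid s(m+1) - s(k+1) \quad \text{and} \quad s(m - k) \mid s(m) - s(k).$$
Taking the first divisibility minus $a$ times the second, and substituting $s(m+1) = a \cdot s(m) + s(1)$, yields
$$s(m - k) \mid s(1) + a \cdot s(k) - s(k+1).$$
The right-hand side is a fixed integer independent of $m$; but as $m$ runs over the infinitely many elements of $S_a$ with $m \ge k+1$, the divisor $s(m - k)$ is unbounded by strict monotonicity. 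Hence $s(1) + a \cdot s(k) - s(k+1) = 0$, and combining with the inductive hypothesis and the standard recurrence $[k+1]_a = a \cdot [k]_a + 1$ gives $s(k+1) = [k+1]_a \cdot s(1)$, completing the induction.

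The main obstacle, such as it is, lies in finding the right linear combination of GNS divisibilities: one must eliminate the $m$-dependence on the right-hand side so that unboundedness of $s(m-k)$ forces the vanishing of a fixed constant. Everything else is mechanical. Notably, no appeal to the growth-rate estimates of the preceding subsection is required; the argument is self-contained modulo the basic divisibility properties of \cref{thm: number-theoretical properties of generalized n-series}. It is also worth observing that the inductive argument in fact proves a stronger dichotomy: if $S_a$ contains even a single element beyond a certain point depending on how far the induction has already been carried, then $s$ is forced globally onto the $[n]_a \cdot s(1)$ trajectory, so the lemma's ``for all sufficiently large $n$'' conclusion is really best possible.
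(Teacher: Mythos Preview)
Your proof is correct and follows essentially the same approach as the paper: both derive from $s(m+1) = a\,s(m) + s(1)$ the divisibility $s(m-k) \mid s(k+1) - a\,s(k) - s(1)$ and exploit the unboundedness of $s(m-k)$ to force this quantity to vanish. The only difference is organizational---you argue the contrapositive (infinitely many such $m$ forces $s = s(1)[n]_a$, via an induction whose hypothesis is actually not needed for the key divisibility step), whereas the paper fixes a single $k$ where the recurrence fails and directly produces the threshold $N$.
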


\begin{proof}
Let $k \in \Z_{\ge 0}$ such that $s(k + 1) \ne a s(k) + s(1)$. Such a $k$ must exist, because otherwise $s(n + 1) = a s(n) + s(1)$ for all $n \in \Z_{\ge 0}$. This would imply by induction that $s(n) = s(1) [n]_q$ with $q = a$ for all $n$, which contradicts our assumption about $s$. Next, choose an integer $N$ large enough so that \[s(N - k) > \max\{s(k + 1), a s(k) + s(1)\}.\] This is always possible, because $s : \Z_{\ge 0} \to \Z$ is strictly increasing and therefore unbounded.

To prove the lemma, we will show that for all $n \ge N$, we have $s(n + 1) \ne a s(n) + s(1)$. Suppose for contradiction that there exists $n \ge N$ with \[s(n + 1) = a s(n) + s(1).\] Taking this equation modulo $s(n - k)$ and using \cref{lem: congruence of generalized n-series} gives \[s(k + 1) \equiv a s(k) + s(1) \pmod{s(n - k)}.\] But $n \ge N$, so \[s(n - k) \ge s(N - k) > \max\{s(k + 1), a s(k) + s(1)\},\] so the modulus is greater than both sides of the congruence, which means that it is an equality. This contradicts the fact that $s(k + 1) \ne a s(k) + s(1)$.
\end{proof}

\begin{proof}[Proof of \cref{thm: lower bound on GNS over Z}]
We want to show that $s(n) = \Omega_a(a^n)$ for all $a \ge 0$. It suffices to prove this for $a \in \Z_{\ge 0}$. For each integer $b$ with $0 < b < a$, apply \cref{lem: lower bound on GNS over Z lemma} with $a$ replaced by $b$. This gives an integer $N_b$ such that for all $n \ge N_b$, \[s(n + 1) \ne b s(n) + s(1).\] Let \[N = \max\{N_b \mid 0 < b < a\}.\] We claim that for all $n \ge N$, we have $s(n + 1) \ge a s(n) + s(1)$.

To see this, let $n \ge N$. By \cref{lem: congruence of generalized n-series}, \[s(n + 1) \equiv s(1) \pmod{s(n)},\] so $s(n + 1) = b s(n) + s(1)$ for some $b \in \Z$. Since $s$ is strictly increasing, $b$ must be positive. We want to show that $b \ge a$. If $b < a$, then we defined $N_b$ above, and $n \ge N \ge N_b$. So $s(n + 1) \ne b s(n) + s(1)$, which is a contradiction. Therefore, $b \ge a$, so \[s(n + 1) \ge a s(n) + s(1).\]

To complete the proof, notice that the claim implies (by induction) that for all $n \ge N$, $s(n) \ge a^{n - N} s(N)$, which means that $s(n) = \Omega_a(a^n)$ as $n \to \infty$.
\end{proof}

\subsection{A more general $s$-Lucas theorem}
\label{sec: a more general s-lucas theorem}

The $s$-Lucas theorem (\cref{thm: s-Lucas theorem}) is a congruence modulo $s(p)$, where $s$ is a GNS and $p$ is prime. The $q$-Lucas theorem has a more general form which allows $p$ to be composite: \[\binom{n_1 p + n_0}{k_1 p + k_0}_q \equiv \binom{n_1}{k_1} \binom{n_0}{k_0}_q \pmod{\Phi_p(q)}.\] In this subsection, we will state and prove an $s$-analogue of the more general $q$-Lucas theorem in the case $n_0 = k_0 = 0$.

Recall the $s$-analogue $\Phi_n(s)$ of the cyclotomic polynomial from \cref{rmk: cyclotomic polynomial}.

\begin{theorem}
\label{thm: Phi_n(s) is an integer for GNS over Z}
    If $s$ is a GNS over $\Z$, then $\Phi_n(s) \in \Z$ for all $n > 0$.
\end{theorem}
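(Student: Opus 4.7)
The plan is to show that $v_\ell(\Phi_n(s)) \geq 0$ for every prime $\ell$; since $\Phi_n(s)$ is a well-defined nonzero rational number (as $s(d) \neq 0$ for $d > 0$), this suffices to place it in $\Z$. Setting $f(d) = v_\ell(s(d))$, \cref{lem: divisibility of generalized n-series} implies that $f$ is monotone with respect to divisibility, while \cref{lem: s preserves gcd} (applied in $R = \Z$) gives the sharp identity $f(\gcd(a,b)) = \min(f(a), f(b))$.

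Since $\mu(n/d) = 0$ unless $n/d$ is squarefree, writing $m = \mathrm{rad}(n)$ for the squarefree radical of $n$ and substituting $e = n/d$, we obtain
$$v_\ell(\Phi_n(s)) = \sum_{e \mid m} \mu(e)\, h(e), \qquad \text{where } h(e) := f(n/e).$$
The properties of $f$ translate into: $h$ is antitone on the divisor lattice of $m$, and $h(\mathrm{lcm}(e, e')) = \min(h(e), h(e'))$ for all $e, e' \mid m$.

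The theorem therefore reduces to the following combinatorial lemma: if $m$ is squarefree and $h: \{e \mid m\} \to \Z_{\geq 0}$ is any antitone function satisfying $h(\mathrm{lcm}(e,e')) = \min(h(e), h(e'))$, then $\sum_{e \mid m} \mu(e) h(e) \geq 0$. I would prove this by induction on $\omega(m)$, the number of distinct prime divisors of $m$. The base case $m = 1$ is trivial. For the inductive step, pick a prime $p \mid m$ and write $m = pm'$. Splitting the sum over divisors $e \mid m$ according to whether $p \mid e$ gives
$$\sum_{e \mid m} \mu(e) h(e) = \sum_{e \mid m'} \mu(e) \bigl(h(e) - h(pe)\bigr) = \sum_{e \mid m'} \mu(e) \max\bigl(0,\, h(e) - h(p)\bigr),$$
where the second equality uses $h(pe) = h(\mathrm{lcm}(p,e)) = \min(h(p), h(e))$. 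Setting $H(e) := \max(0, h(e) - h(p))$, one checks that $H$ again satisfies the hypotheses of the lemma on the divisor lattice of $m'$: antitonicity is immediate, and the $\mathrm{lcm}$-min property follows from the elementary identity $\max(0, \min(a,b) - c) = \min(\max(0, a-c), \max(0, b-c))$. By induction, $\sum_{e \mid m'} \mu(e) H(e) \geq 0$.

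The main obstacle is identifying the correct inductive invariant: after peeling off a single prime factor, the truncated differences must themselves form a $\Z_{\geq 0}$-valued function with the same two hypotheses on the smaller divisor lattice, so that the induction closes. Once this observation is made, the proof involves no induction on $n$ itself, and the remaining steps are routine.
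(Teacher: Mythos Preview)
Your proof is correct, but it takes a different route from the paper's. The paper first establishes a general combinatorial identity expressing an $\lcm$ as an alternating product of $\gcd$'s over subsets, and then uses it (together with \cref{lem: s preserves gcd}) to prove the closed-form identity
\[
\Phi_n(s) \;=\; \frac{s(n)}{\lcm\{\,s(n/p)\mid p\text{ prime factor of }n\,\}},
\]
from which integrality is immediate since each $s(n/p)$ divides $s(n)$. You instead work one prime $\ell$ at a time, reduce to a M\"obius sum over the Boolean lattice of $\mathrm{rad}(n)$, and peel off prime factors inductively; the key step is noticing that the truncation $H(e)=\max(0,h(e)-h(p))$ again satisfies the $\lcm$--$\min$ hypothesis, so the induction closes. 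Your argument is self-contained and arguably more elementary (no auxiliary $\lcm$--$\gcd$ lemma), while the paper's approach has the payoff of producing the explicit quotient formula for $\Phi_n(s)$, which is used immediately afterward in the proof of \cref{lem: lemma about Phi_n(s)}. One small remark: in your write-up, antitonicity of $h$ is actually a consequence of the $\lcm$--$\min$ property (take $e\mid e'$ so $\mathrm{lcm}(e,e')=e'$), so you need not list it as a separate hypothesis.
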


We will prove that $\Phi_n(s) \in \Z$ by showing that \[\Phi_n(s) = \frac{s(n)}{\lcm\{s(n/p) \mid p \text{ prime factor of } n\}}.\] Here is a lemma:

\begin{lemma}
\label{lem: formula for lcm in terms of gcd}
    Let $S$ be an arbitrary multiset of positive integers. Then, \[\lcm(S) = \prod_{\substack{\text{\emph{multiset }} A \subseteq S \\ A \ne \emptyset}} \gcd(A)^{(-1)^{|A| - 1}}.\]
\end{lemma}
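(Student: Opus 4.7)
The plan is to reduce the identity to a purely combinatorial statement about $p$-adic valuations, and then verify the resulting max-min identity by a direct sign-cancellation argument.

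First, I would observe that both sides of the claimed formula are positive rationals, so it suffices to check equality of their $p$-adic valuations for every prime $p$. Fix $p$ and write $S = \{s_1, \ldots, s_n\}$ (indexed, with multiplicities), and let $a_i = v_p(s_i)$. Using $v_p(\lcm S) = \max_i a_i$ and $v_p(\gcd A) = \min_{i \in A} a_i$, applying $v_p$ to both sides reduces the lemma to the purely combinatorial identity
\[
\max_{1 \le i \le n} a_i \;=\; \sum_{\emptyset \ne A \subseteq [n]} (-1)^{|A|-1} \min_{i \in A} a_i,
\]
valid for any nonnegative integers $a_1, \ldots, a_n$. Note that submultisets $A \subseteq S$ correspond bijectively to subsets $A \subseteq [n]$ of indices, so the multiset nature of $S$ causes no issues.

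Next, I would prove this max-min identity by sorting the indices so that $a_1 \le a_2 \le \cdots \le a_n$ (breaking ties by index order) and computing the coefficient of each $a_k$ on the right-hand side. The value $a_k$ appears as $\min_{i \in A} a_i$ precisely for those subsets $A$ whose minimum-indexed element is $k$, i.e.\ $A = \{k\} \cup B$ with $B \subseteq \{k+1, \ldots, n\}$. The sign contributed by such an $A$ is $(-1)^{|A|-1} = (-1)^{|B|}$, so the total coefficient of $a_k$ equals
\[
\sum_{B \subseteq \{k+1, \ldots, n\}} (-1)^{|B|} \;=\; (1-1)^{n-k},
\]
which is $1$ if $k = n$ and $0$ otherwise. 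Hence the right-hand side collapses to $a_n = \max_i a_i$, as required.

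The argument is fundamentally inclusion-exclusion, and I do not anticipate a serious obstacle; the only subtle point is bookkeeping around the bijection between submultisets of $S$ and subsets of the index set $[n]$, together with making sure the tie-breaking in the sorting step is consistent so that the coefficient computation assigns each value to a unique index. Once that is in place, the $p$-local reduction and the $(1-1)^{n-k}$ telescoping together yield the lemma.
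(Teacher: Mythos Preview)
Your proposal is correct and follows essentially the same approach as the paper's proof: both reduce to $p$-adic valuations, sort the resulting exponents, and compute the coefficient of each $a_j$ in the alternating min-sum by counting subsets with a fixed minimum index, obtaining the alternating binomial sum $(1-1)^{n-j}$. Your handling of the multiset issue via the bijection with subsets of $[n]$ matches the paper's implicit indexing of $T = \{a_1,\ldots,a_n\}$, and your explicit tie-breaking remark addresses a subtlety the paper leaves implicit.
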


This lemma can be proved using the tools of elementary number theory. Notice that when $s$ is a two-element multiset $\{a, b\}$, this formula reduces to $\lcm(a, b) = ab / \gcd(a, b)$.

\begin{proof}[Proof of \cref{lem: formula for lcm in terms of gcd}]
We can decompose the right-hand side into its prime-power factors. Let $p$ be a prime, and let $T$ be the multiset $\{v_p(a) \mid a \in S\}$, where $v_p(a)$ is the exponent of $p$ in the prime factorization of $a$. The exponent of $p$ in the right-hand side of the equality we are trying to prove is \[\sum_{\substack{\text{multiset } B \subseteq T \\ B \ne \emptyset}} (-1)^{|B| - 1} \min(B),\] because a GCD of powers of $p$ is $p$ to the power of the minimum exponent. We want to show that this is equal to $v_p(\lcm(S))$, which is $\max(T)$. Write $T = \{a_1, a_2, \dots, a_k\}$, where $k = |T|$ and $a_1 \le a_2 \le \cdots \le a_n$. We will count the number of times each $a_j$ is counted in the sum. Since $\max(T) = a_n$, we have to show that $a_n$ is counted once and $a_j$ is counted $0$ times for all $1 \le j < n$.

Let $1 \le j \le n$ and let $1 \le k \le n$. We want to count how many subsets $B \subseteq T$ have minimum $j$ and cardinality $n$. Such a subset must include $a_j$, but can have any combination of $k - 1$ elements $a_\ell$ with $\ell > j$. There are $\binom{n - j}{k - 1}$ choices for these remaining elements. Therefore, the number of times $a_j$ is counted in the sum is \[\sum_{k = 1}^n (-1)^{k - 1} \binom{n - j}{k - 1} = \sum_{k = 0}^{n - 1} (-1)^k \binom{n - j}{k}.\] Since $\binom{n - j}{k}$ is zero for $k > n - j$, this is just the alternating sum of row $n - j$ of Pascal's triangle, which is $0$ if $n - j > 0$ and $1$ if $n - j = 0$. Therefore, $a_j$ is counted $0$ times for $j < n$ and $1$ time for $j = n$ in the sum \[\sum_{\substack{B \subseteq P \\ B \ne \emptyset}} (-1)^{|B| - 1} \min(B),\] so the sum is equal to $a_n$. This means that the exponents of $p$ in the left-hand and right-hand sides of the equality in the theorem statement are both $a_n$. Combining this fact for each prime $p$ proves the lemma.
\end{proof}

\begin{proof}[Proof of \cref{thm: Phi_n(s) is an integer for GNS over Z}]
Let $P$ be the set of prime factors of $n$, and let \[r(n) = \frac{s(n)}{\lcm\{s(n/p) \mid p \in P\}}.\] Notice that $r(n) \in \Z$ because \cref{lem: divisibility of generalized n-series} implies that $s(n/p) \mid s(n)$ for all prime factors $p$ of $n$. We want to show that $\Phi_n(s) = r(n)$.

By the definition of $r(n)$, \[\frac{s(n)}{r(n)} = \lcm\left\{s\left(\frac{n}{p}\right) \bigmid p \in P\right\}.\] By \cref{lem: formula for lcm in terms of gcd} with $S = \{s(n/p) \mid p \in P\}$, this is equal to  \[\prod_{\substack{A \subseteq P \\ A \ne \emptyset}} \gcd\left\{s\left(\frac{n}{p}\right) \bigmid p \in A\right\}^{(-1)^{|A| - 1}}.\] \cref{lem: s preserves gcd} and the fact that $s$ is nonnegative imply that $s$ preserves GCDs, so \[\gcd\left\{s\left(\frac{n}{p}\right) \bigmid p \in A\right\} = s\left(\gcd\left\{\frac{n}{p} \bigmid p \in A\right\}\right) = s\left(\frac{n}{\lcm(A)}\right).\] This implies that \[\frac{s(n)}{r(n)} = \prod_{\substack{A \subseteq P \\ A \ne \emptyset}} s\left(\frac{n}{\lcm(A)}\right)^{(-1)^{|A| - 1}}.\] Dividing both sides by $s(n)$ and taking the reciprocal of both sides gives \[r(n) = \prod_{A \subseteq P} s\left(\frac{n}{\lcm(A)}\right)^{(-1)^{|A|}}.\] (On the right-hand side, we removed the condition $A \ne \emptyset$ and multiplied the exponent by $-1$.)

We have to count how many ways each divisor $d$ of $n$ can be written as $\lcm(A)$ for $A \subseteq P$. Since the elements of $A$ are distinct primes, $\lcm(A)$ is the product of the elements of $A$. Therefore, $d = \lcm(A)$ is square-free, and given $d$ there is a unique choice of $A$ (the set of prime factors of $d$). So we get \[r(d) = \prod_{\substack{d \mid n \\ d \text{ square-free}}} s\left(\frac{n}{d}\right)^{\mu(d)},\] using the fact that $\mu(d)$ is $(-1)^{|A|}$ if $A$ is the set of prime factors of a square-free number $d$. Since $\mu(d) = 0$ for $d$ not square-free, we can remove the restriction that $d$ is square-free: \[r(n) = \prod_{d \mid n} s\left(\frac{n}{d}\right)^{\mu(d)} = \prod_{d \mid n} s(d)^{\mu(n/d)} = \Phi_n(s).\] This is what we wanted to show.
\end{proof}

Before proving the $s$-Lucas theorem for composite $p$, we will prove a lemma about $\Phi_n(s)$.

\begin{lemma}
\label{lem: lemma about Phi_n(s)}
Let $s$ be a nonnegative integer GNS, let $m \in \Z_{> 0}$, and let $a, b \in \Z_{\ge 0}$ such that \[a \equiv b \not\equiv 0 \pmod{m}.\] Define \[d = \gcd(a, m) = \gcd(b, m).\] Then, \[\frac{s(a)}{s(d)} \equiv \frac{s(b)}{s(d)} \pmod{\Phi_m(s)}.\] Additionally, both sides of this congruence are units modulo $\Phi_m(s)$. Equivalently, the function $\Z_{\ge 0} \to \Z_{\ge 0}$ defined by $a \mapsto s(a) / s(\gcd(a, m))$ induces a well-defined function \[(\Z/m) \setminus \{0\} \to (\Z/\Phi_m(s))^\times.\]
\end{lemma}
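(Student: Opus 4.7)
The plan is to reduce the lemma to a direct application of the congruence and unit lemmas of \S2.2 by rescaling $s$ by $d$. First, since $a \not\equiv 0 \pmod{m}$, the common value $d = \gcd(a,m) = \gcd(b,m)$ is a proper divisor of $m$. I would write $a = kd$, $b = ld$, $m = dm'$ with $m' > 1$ and $\gcd(k, m') = \gcd(l, m') = 1$, so that the hypothesis $a \equiv b \pmod{m}$ becomes $k \equiv l \pmod{m'}$. Next, define $\tilde s \colon \Z_{\ge 0} \to \Z_{\ge 0}$ by $\tilde s(j) = s(jd)/s(d)$; this lands in $\Z$ by \cref{lem: divisibility of generalized n-series}, and a quick check of the three conditions in \cref{def: generalized n-series} (using $s((j_1-j_2)d) \mid s(j_1 d) - s(j_2 d)$ and dividing through by $s(d)$) shows that $\tilde s$ is a nonnegative GNS over $\Z$ with $\tilde s(1) = 1$.

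The crux of the argument is the divisibility $\Phi_m(s) \mid \tilde s(m') = s(m)/s(d)$. For this I would invoke the explicit formula established inside the proof of \cref{thm: Phi_n(s) is an integer for GNS over Z}, namely
\[ \Phi_m(s) \;=\; \frac{s(m)}{\lcm\{\, s(m/p) \,:\, p \text{ prime factor of } m\,\}}. \]
Because $d$ is a proper divisor of $m$, some prime $p$ divides $m/d$, so $d \mid m/p$, and hence \cref{lem: divisibility of generalized n-series} gives $s(d) \mid s(m/p)$, which in turn divides the $\lcm$ above. This yields $s(d)\, \Phi_m(s) \mid s(m)$, i.e.\ $\Phi_m(s) \mid \tilde s(m')$.

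To finish, apply \cref{lem: congruence of generalized n-series} to the GNS $\tilde s$ with $k \equiv l \pmod{m'}$ to obtain $\tilde s(k) \equiv \tilde s(l) \pmod{\tilde s(m')}$, then reduce modulo $\Phi_m(s)$. For the unit statement, since $\tilde s(1) = 1$ is a unit and $k$ (respectively $l$) is a unit modulo $m'$, \cref{lem: s of a unit mod n is a unit mod s(n)} gives that $\tilde s(k)$ and $\tilde s(l)$ are units modulo $\tilde s(m')$, hence \emph{a fortiori} modulo $\Phi_m(s)$. The main obstacle in this plan is the divisibility $s(d)\, \Phi_m(s) \mid s(m)$; the rescaling $\tilde s$ and the subsequent appeals to \S2.2 are essentially formal once this is in hand, but this step genuinely requires the product/$\lcm$ presentation of $\Phi_m(s)$ from the preceding theorem.
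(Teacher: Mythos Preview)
Your proof is correct and takes a genuinely different route from the paper. The paper works directly with $s$: it starts from $s(a) \equiv s(b) \pmod{s(m)}$, divides through by $s(d)$ to get a congruence modulo $s(m)/s(d)$, and then establishes the same key divisibility $\Phi_m(s) \mid s(m)/s(d)$ that you use (though the paper proves the stronger equality $\Phi_m(s) = \gcd\{s(m)/s(d') : d' \text{ a proper divisor of } m\}$ along the way). For the unit claim, the paper does a gcd computation, $\gcd(\Phi_m(s), s(a)/s(d)) \mid \gcd(s(m), s(a))/s(d) = s(d)/s(d) = 1$, which relies on the gcd-preservation \cref{lem: s preserves gcd}. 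By contrast, you package everything into the rescaled GNS $\tilde s(j) = s(jd)/s(d)$ with $\tilde s(1) = 1$, which lets you invoke \cref{lem: congruence of generalized n-series} and \cref{lem: s of a unit mod n is a unit mod s(n)} off the shelf once $\Phi_m(s) \mid \tilde s(m')$ is known. Your approach is a clean reduction to the basic \S\ref{sec: number-theoretical properties of GNS} toolkit and sidesteps \cref{lem: s preserves gcd} entirely; the paper's approach is more hands-on and yields the slightly sharper gcd identity as a byproduct. Both hinge on exactly the divisibility you flagged as the main obstacle, and your derivation of it from the $\lcm$ formula is correct and a bit more direct than the paper's.
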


\begin{proof}
By \cref{lem: congruence of generalized n-series}, \[s(a) \equiv s(b) \pmod{s(m)}.\] 
Since $\gcd(s(m), s(d)) = s(d)$, this implies that
\[\frac{s(a)}{s(d)} \equiv \frac{s(b)}{s(d)} \quad\left(\!\!\!\!\!\!\mod{\frac{s(m)}{s(d)}}\right).\] Notice that
\begin{align*}
\gcd\left\{\frac{s(m)}{s(d)} \bigmid d \text{ proper divisor of } m\right\}
&= \frac{s(m)}{\lcm\{s(d) \mid d \text{ proper divisor of } m\}} \\
&= \frac{s(m)}{\lcm\{s(m/p) \mid p \text{ prime factor of } m\}} \\
&= \Phi_m(s).
\end{align*}
In the second step of this chain of equalities, we used the fact that for every proper divisor $d$ of $m$ there exists a prime factor $p$ of $m$ such that $d \mid m/p$. We also used \cref{lem: divisibility of generalized n-series}. We can conclude that $\Phi_m(s) \mid s(m)/s(d)$ for all proper divisors $d$ of $m$. In particular, for $d = \gcd(a, m) = \gcd(b, m)$, we have \[\frac{s(a)}{s(d)} \equiv \frac{s(b)}{s(d)} \pmod{\Phi_m(s)}.\]

To complete the proof, we just have to show that $s(a) / s(d)$ is a unit modulo $\Phi_m(s)$. This is true because $\Phi_m(s) \mid s(m) / s(d)$ implies that \[\gcd\left(\Phi_m(s), \frac{s(a)}{s(d)}\right) \mid \gcd\left(\frac{s(m)}{s(d)}, \frac{s(a)}{s(d)}\right) = \frac{\gcd(s(m), s(a))}{s(d)} = \frac{s(d)}{s(d)} = 1. \qedhere\]
\end{proof}

\begin{theorem}[``Composite version'' of the $s$-Lucas theorem]
\label{thm: composite s-Lucas theorem}
    Let $m \in \Z_{> 0}$ and let $n, k \in \Z_{\ge 0}$. Then, \[\binom{nm}{km}_s \equiv \binom{n}{k}_{s_m} \pmod{\Phi_m(s)}.\]
\end{theorem}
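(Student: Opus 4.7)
The plan is to mimic the special case $n_0 = k_0 = 0$ of \cref{thm: s-Lucas theorem} that was already established, replacing the appeal to \cref{lem: s of a unit mod n is a unit mod s(n)} (which only gives units modulo $s(p)$ for prime $p$) with the stronger \cref{lem: lemma about Phi_n(s)} we just proved. Write out
\[
\binom{nm}{km}_s = \prod_{j=0}^{km-1} \frac{s(nm-j)}{s(km-j)}
\]
and split the index set $\{0, 1, \ldots, km-1\}$ according to whether $m \mid j$ or not. The indices $j = m\ell$ with $0 \le \ell \le k-1$ contribute exactly
\[
\prod_{\ell=0}^{k-1} \frac{s((n-\ell)m)}{s((k-\ell)m)} = \binom{n}{k}_{s_m},
\]
so the task reduces to showing that the product over the remaining indices is $\equiv 1 \pmod{\Phi_m(s)}$ in an appropriate sense.

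To make this rigorous despite $s(km-j)$ generally not being a unit modulo $\Phi_m(s)$, I introduce the ``reduced'' quantities $\tilde{s}(j) := s(j)/s(\gcd(j,m))$, which by \cref{lem: divisibility of generalized n-series} are integers. For any $j$ with $1 \le j \le km-1$ and $m \nmid j$, one has $\gcd(nm-j,m) = \gcd(km-j,m) = \gcd(j,m)$, so $s(nm-j) = s(\gcd(j,m))\tilde{s}(nm-j)$ and similarly for $s(km-j)$. Clearing denominators gives the honest integer equality
\[
\binom{nm}{km}_s \cdot \prod_{\substack{j=1 \\ m \nmid j}}^{km-1} \tilde{s}(km-j) \;=\; \binom{n}{k}_{s_m} \cdot \prod_{\substack{j=1 \\ m \nmid j}}^{km-1} \tilde{s}(nm-j).
\]

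Now reduce this identity modulo $\Phi_m(s)$. By \cref{lem: lemma about Phi_n(s)} applied to $a = nm-j$ and $b = km-j$, each factor on the right-hand side satisfies $\tilde{s}(nm-j) \equiv \tilde{s}(km-j) \pmod{\Phi_m(s)}$, and each such factor is moreover a unit modulo $\Phi_m(s)$. Replacing each $\tilde{s}(nm-j)$ by $\tilde{s}(km-j)$ and cancelling the (unit) product $\prod_{j} \tilde{s}(km-j)$ from both sides yields $\binom{nm}{km}_s \equiv \binom{n}{k}_{s_m} \pmod{\Phi_m(s)}$, as desired.

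The main technical subtlety — and the only real departure from the $n_0=k_0=0$ case of the prime version — is that \cref{lem: s of a unit mod n is a unit mod s(n)} is no longer available, since $j$ coprime to $m$ is too restrictive when $m$ is composite; the values of $j$ we must handle satisfy only $m \nmid j$, so $\gcd(j,m)$ can be a nontrivial proper divisor. This is exactly what \cref{lem: lemma about Phi_n(s)} is designed to handle, and the need to first divide out by $s(\gcd(j,m))$ before invoking unit-ness modulo $\Phi_m(s)$ is the key bookkeeping step.
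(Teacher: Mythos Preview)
Your proof is correct and follows essentially the same approach as the paper's. Both arguments divide the factors $s(nm-j)$ and $s(km-j)$ by $s(\gcd(j,m))$ and then invoke \cref{lem: lemma about Phi_n(s)} to cancel the resulting unit factors for $m\nmid j$; the only cosmetic difference is that you clear denominators first to obtain an integer identity before reducing modulo $\Phi_m(s)$, whereas the paper cancels the congruent units directly in $\Z/\Phi_m(s)$.
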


\begin{proof}
    By definition, \[\binom{nm}{km}_s = \frac{s(nm) s(nm - 1) \cdots s(nm - km + 1)}{s(km) s(km - 1) \cdots s(1)} = \frac{\prod_{j = 1}^{km} s(nm - km + j)}{\prod_{j = 1}^{km} s(j)}.\] Our goal will be to cancel each $s(nm - km + j)$ with the corresponding $s(j)$ modulo $\Phi_m(s)$. The problem is that in general, these are not units modulo $\Phi_m(s)$. To fix this, we will divide $s(nm - km + j)$ and $s(j)$ by $s(\gcd(j, m))$: \[\binom{nm}{km}_s = \frac{\prod_{j = 1}^{km} s(nm - km + j) / s(\gcd(j, m))}{\prod_{j = 1}^{km} s(j) / s(\gcd(j, m))}.\] Since $j \equiv nm - km + j \pmod{m}$, we can apply \cref{lem: lemma about Phi_n(s)} for all $j$ not divisible by $m$, and we get that \[\frac{s(nm - km + j)}{s(\gcd(j, m))} \quad \text{and} \quad \frac{s(j)}{s(\gcd(j, m))}\] are congruent units modulo $\Phi_m(s)$.

    Therefore, we can cancel $s(nm - km + j) / s(\gcd(j, m))$ with $s(j) / s(\gcd(j, m))$ modulo $\Phi_m(s)$ for every $j$ not divisible by $m$. We get
    \begin{align*}
        \binom{nm}{km}_s
        &= \frac{\prod_{j = 1}^{km} s(nm - km + j) / s(\gcd(j, m))}{\prod_{j = 1}^{km} s(j) / s(\gcd(j, m))} \\
        &\equiv \frac{\prod_{\ell = 1}^k s((n - k + \ell) m) / s(m)}{\prod_{\ell = 1}^k s(\ell m) / s(m)} \\
        &= \frac{\prod_{\ell = 1}^k s((n - k + \ell) m)}{\prod_{\ell = 1}^k s(\ell m)} \\
        &= \binom{n}{k}_{s_m} \pmod{\Phi_m(s)}.
    \end{align*}
    We changed product indices from $j$ to $\ell = j/m$, because the terms with $m \nmid j$ were canceled in the second step.
\end{proof}
\begin{remark}
When $m$ is prime, \cref{thm: composite s-Lucas theorem} reduces to the case $n_0 = k_0 = 0$ of the $s$-Lucas theorem (\cref{thm: s-Lucas theorem}). 
\end{remark}

\newpage
\section{Generalized $n$-Series and de Rham Complexes}
\label{sec: formal group laws}

\subsection{Basic properties of the $s$-de Rham complex}
\label{sec: the s-Leibniz rule}

In this subsection, we will define the ``$s$-de Rham complex'' using the $s$-derivative of \cref{def: s-derivative}. Recall that this is the $R$-linear map $\nabla_s : R[x] \to R[x]$ given on monomials by $\nabla_s(x^n) = s(n) x^{n - 1}$. Write $\AA^1 = \spec R[x]$ to denote the affine line over $R$.
\begin{definition}\label{def: s-dR}
The $s$-de Rham complex for $R[x]$ is the $2$-term complex
$$\sdr{\AA^1} := (R[x] \xar{\nabla_s} R[x] dx).$$
Here, the square indicates the dependence of $\sdr{\AA^1}$ on the choice of coordinate $x$.
\end{definition}
\begin{remark}
It is easy to generalize the $s$-de Rham complex to several variables (e.g., by defining $s\Omega_{\AA^n}$ to be $\sdr{\AA^1}^{\otimes_R n}$). Since proving multivariable analogues of the results below is straightforward, we will only study the case of a single variable.
\end{remark}
\begin{example}
Let $s: \Z_{\geq 0} \to \Z\pw{q-1}$ denote the $q$-integer GNS from \cref{ex: q-integer}. Then \cref{def: s-dR} is precisely the $q$-de Rham complex of \cite{scholze-q-def}.
\end{example}
The $s$-Pascal identity of \cref{thm: s-Pascal identity} can be restated in terms of the $s$-derivative:
\begin{lemma}
There is an equality of operators:
$$\left[\frac{\nabla_s^k}{k!_s}, x\right] = \frac{\nabla_s^{k - 1}}{(k - 1)!_s}.$$
\end{lemma}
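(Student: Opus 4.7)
The plan is to check the asserted equality of $R$-linear operators on $R[x]$ by evaluating both sides on the standard basis of monomials $\{x^n\}_{n\geq 0}$, which suffices by linearity.

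First I would record the closed form for iterated $s$-derivatives. A straightforward induction on $k$, starting from the definition $\nabla_s(x^m) = s(m)\, x^{m-1}$, gives
$$\nabla_s^k(x^n) \;=\; s(n)\, s(n-1) \cdots s(n-k+1)\, x^{n-k} \;=\; \frac{n!_s}{(n-k)!_s}\, x^{n-k}$$
whenever $n \geq k$, and zero otherwise. Dividing through by $k!_s$ yields the compact formula $\frac{\nabla_s^k}{k!_s}(x^n) = \binom{n}{k}_s x^{n-k}$, which will be the computational engine of the proof.

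Next I would evaluate each side of the commutator identity on $x^n$. Writing $x$ for the operator of multiplication by $x$, the commutator unfolds as
$$\left[\frac{\nabla_s^k}{k!_s},\, x\right]\!(x^n) \;=\; \frac{\nabla_s^k}{k!_s}(x^{n+1}) \;-\; x \cdot \frac{\nabla_s^k}{k!_s}(x^n) \;=\; \Bigl(\binom{n+1}{k}_s - \binom{n}{k}_s\Bigr) x^{n+1-k},$$
while the right-hand side $\frac{\nabla_s^{k-1}}{(k-1)!_s}$ applied to $x^n$ gives $\binom{n}{k-1}_s x^{n+1-k}$. Thus the operator identity reduces to the scalar identity
$$\binom{n+1}{k}_s - \binom{n}{k}_s \;=\; \binom{n}{k-1}_s$$
for all $n, k \geq 1$ (with the convention $\binom{n}{j}_s = 0$ for $j<0$ or $j>n$ handling the edge cases).

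The final step is to extract this scalar identity from the $s$-Pascal identity of \cref{thm: s-Pascal identity}, applied with $n$ replaced by $n+1$. I expect this to be the only genuine step of the proof, and, correspondingly, the only potential pitfall: since $s$-Pascal carries the extra factor $c_s(n+1,k) = \frac{s(n+1)-s(k)}{s(n+1-k)}$, one must be careful about which form of $s$-Pascal (or its symmetric companion obtained via $\binom{n}{k}_s = \binom{n}{n-k}_s$) is being invoked, and verify that the coefficients line up so that the commutator identity reduces cleanly to \cref{thm: s-Pascal identity}. Beyond that, the proof is purely a monomial-by-monomial check.
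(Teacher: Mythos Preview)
Your approach is essentially identical to the paper's: both evaluate on monomials, use $\frac{\nabla_s^k}{k!_s}(x^m)=\binom{m}{k}_s x^{m-k}$, and reduce to a scalar identity among $s$-binomial coefficients.

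The pitfall you flag is real, and in fact fatal. The scalar identity you need,
\[
\binom{n+1}{k}_s - \binom{n}{k}_s \;=\; \binom{n}{k-1}_s,
\]
is the \emph{ordinary} Pascal identity, not the $s$-Pascal identity, and it does not follow from \cref{thm: s-Pascal identity}: that result gives $\binom{n+1}{k}_s = \binom{n}{k-1}_s + c_s(n+1,k)\binom{n}{k}_s$, and $c_s(n+1,k)\neq 1$ in general. Concretely, for the $q$-integer GNS and $n=k=1$ one gets $\binom{2}{1}_q - \binom{1}{1}_q = [2]_q - 1 = q$ on the left but $\binom{1}{0}_q = 1$ on the right. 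Neither the symmetric form of $s$-Pascal (via $\binom{n}{k}_s=\binom{n}{n-k}_s$) helps: it too carries a nontrivial coefficient.

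The paper's own proof contains the same slip. It writes ``The $s$-Pascal identity can therefore be stated as $\frac{\nabla_s^k}{k!_s} x^n = \frac{\nabla_s^{k-1}}{(k-1)!_s} x^{n-1} + x\,\frac{\nabla_s^k}{k!_s} x^{n-1}$,'' but substituting $\frac{\nabla_s^j}{j!_s} x^m = \binom{m}{j}_s x^{m-j}$ shows this is exactly $\binom{n}{k}_s = \binom{n-1}{k-1}_s + \binom{n-1}{k}_s$ with the factor $c_s(n,k)$ silently dropped. So the lemma as stated is false for general GNS; a correct operator reformulation of $s$-Pascal must retain the coefficient $c_s$ (equivalently, involve the $\star_s$-product of \cref{thm: s-product rule}) on one of the terms.
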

\begin{proof}
Let $n > k \ge 0$ be integers. Since $\nabla_s(x^n) = s(n) x^{n - 1}$, we have \[\nabla_s^k(x^n) = s(n) s(n - 1) \cdots s(n - k + 1) x^{n - k} = \frac{n!_s}{(n - k)!_s} x^{n - k}.\] This implies that \[\frac{\nabla_s^k}{k!_s} x^n = \binom{n}{k}_s x^{n - k}.\] The $s$-Pascal identity can therefore be stated as: \[\frac{\nabla_s^k}{k!_s} x^n = \frac{\nabla_s^{k - 1}}{(k - 1)!_s} x^{n - 1} + x \frac{\nabla_s^k}{k!_s} x^{n - 1}.\] Rearranging gives \[\frac{\nabla_s^k}{k!_s} x^n - x \frac{\nabla_s^k}{k!_s} x^{n - 1} = \frac{\nabla_s^{k - 1}}{(k - 1)!_s} x^{n - 1}.\] Recognizing the left side as a commutator of operators, this can be written as \[\left[\frac{\nabla_s^k}{k!_s}, x\right] x^{n - 1} = \frac{\nabla_s^{k - 1}}{(k - 1)!_s} x^{n - 1}.\] 
This implies the desired equality of operators.
\end{proof}
\begin{remark}
One can similarly restate the $s$-Lucas theorem (\cref{thm: s-Lucas theorem}) via the $s$-derivative: namely, if the hypotheses of \cref{thm: s-Lucas theorem} are satisfied, then for any prime $p$ and any nonnegative integers $n_1, n_0, k_1, k_0$ such that $n_0, k_0 < p$, we have
$$\frac{\nabla_s^{k_1 p + k_0}}{(k_1 p + k_0)!_s}(x^{n_1 p + n_0}) \equiv \frac{\partial_{x^p}^{k_1}}{k_1!}(x^{n_1 p}) \frac{\nabla_s^{k_0}}{k_0!_s}(x^{n_0}) \pmod{s(p)}.$$
\end{remark}

\begin{prop}[$s$-product rule]
    \label{thm: s-product rule}
    For a GNS $s$ over $R$, define an $R$-bilinear operator $\star_s : R[x] \otimes_R R[x] \to R[x]$ on monomials by \[x^a \star_s x^b = c_s(a + b + 1, a) x^{a + b}\] for all $a, b \in \Z_{\ge 0}$. Then, for all $f, g \in R[x]$, \[\nabla_s(fg) = \nabla_s(f) g + f \star_s \nabla_s(g).\]
\end{prop}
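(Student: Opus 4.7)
The plan is to verify the identity on monomials and extend by $R$-bilinearity. Concretely, both sides of the claimed formula are $R$-bilinear in the pair $(f,g)$: the left side is because $\nabla_s$ is $R$-linear and multiplication in $R[x]$ is $R$-bilinear, while the right side is because $\nabla_s$ is $R$-linear, ordinary multiplication is $R$-bilinear, and $\star_s$ is declared $R$-bilinear. Hence it suffices to prove
$$\nabla_s(x^a x^b) = \nabla_s(x^a)\, x^b + x^a \star_s \nabla_s(x^b)$$
for all $a,b \in \Z_{\ge 0}$.

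For the main case $a, b \ge 1$, the left side equals $s(a+b)\, x^{a+b-1}$ by \cref{def: s-derivative}. For the right side, $\nabla_s(x^a)\, x^b = s(a)\, x^{a+b-1}$, and $x^a \star_s \nabla_s(x^b) = s(b)\,(x^a \star_s x^{b-1}) = s(b)\, c_s(a+b,a)\, x^{a+b-1}$ by the definition of $\star_s$ with the exponents $(a, b-1)$ in place of $(a,b)$. Using $c_s(a+b, a) = \frac{s(a+b)-s(a)}{s(b)}$ from \cref{eq: csnk-definition}, this simplifies to $(s(a+b) - s(a))\, x^{a+b-1}$. Adding the two contributions gives $s(a+b)\, x^{a+b-1}$, matching the left side.

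The only remaining thing is to check the edge cases $a = 0$ or $b = 0$. The case $b = 0$ is immediate because $\nabla_s(1) = 0$ (recall $s(0) = 0$), so both sides reduce to $\nabla_s(x^a) = s(a) x^{a-1}$. For $a = 0$ one uses $\nabla_s(1) = 0$ on the first summand of the right side and computes $1 \star_s \nabla_s(x^b) = s(b)(x^0 \star_s x^{b-1}) = s(b)\, c_s(b,0)\, x^{b-1} = s(b)\, x^{b-1}$, where the last equality again uses $s(0) = 0$.

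There is no substantive obstacle here: the identity is essentially a restatement of the relation $c_s(a+b, a)\, s(b) = s(a+b) - s(a)$, which is built into the definition of a GNS. The only thing worth being careful about is the handling of the boundary exponents ($a=0$ or $b=0$), where one must invoke $s(0)=0$ both to ensure $\nabla_s(1) = 0$ and to make the $c_s$-evaluation $c_s(b,0) = s(b)/s(b) = 1$ work out.
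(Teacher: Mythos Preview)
Your proof is correct and follows essentially the same approach as the paper: reduce to monomials by $R$-bilinearity and verify directly using $c_s(a+b,a)\,s(b) = s(a+b)-s(a)$. You are slightly more careful than the paper in treating the boundary cases $a=0$ and $b=0$ separately (the paper's chain of equalities formally involves $s(b)$ in a denominator), but this is a cosmetic difference rather than a genuinely different route.
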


\begin{proof}
    Since $\nabla_s$ is $R$-linear and $\star_s$ is $R$-bilinear, the theorem follows from the case where $f$ and $g$ are monomials $x^a$ and $x^b$. In this case,
    \begin{align*}
        \nabla_s(x^a) x^b + x^a \star_s \nabla_s(x^b)
        &= s(a) x^{a - 1} x^b + x^a \star_s s(b) x^{b - 1} \\
        &= s(a) x^{a + b - 1} + s(b) c_s(a + b, a) x^{a + b - 1} \\
        &= \left(s(a) + s(b) \cdot \frac{s(a + b) - s(a)}{s(b)}\right) x^{a + b - 1} \\
        &= (s(a) + s(a + b) - s(a)) x^{a + b - 1} \\
        &= s(a + b) x^{a + b - 1} \\
        &= \nabla_s(x^{a + b}). \qedhere
    \end{align*}
\end{proof}
\begin{example}
Let $s: \Z_{\geq 0} \to \Z\pw{q-1}$ denote the $q$-integer GNS from \cref{ex: q-integer}. Then 
$$c_s(a+b+1,a) = \frac{[a+b+1]_q - [a]_q}{[b+1]_q} = \frac{q^{a+b+1} - q^a}{q^{b+1}-1} = q^a,$$
so that $x^a \star_s x^b = q^a x^{a+b} = (qx)^a x^b$. In particular, 
$$f(x) \star_s \nabla_s(g(x)) = f(qx) \nabla_q(g(x)),$$
so that \cref{thm: s-product rule} reduces to the usual $q$-Leibniz rule.
\end{example}
\begin{example}
One can check that the function $s: \Z_{\geq 0} \to \Z\pw{q-1}$ given by
$$s(n) = \frac{1}{q-1} \frac{q^n - (2-q)^n}{q^n + (2-q)^n} \in \Z\pw{q-1}$$
defines a GNS; see \cref{ex: fgl-examples}. It follows that
$$c_s(a+b+1,a) = 2q^a(2-q)^a \frac{q^{b+1} + (2-q)^{b+1}}{(q^{a+b+1} + (2-q)^{a+b+1}) (q^a + (2-q)^a)}.$$
In particular, unlike for the $q$-integer GNS, there is no simple expression for $f(x) \star_s g(x)$.
\end{example}
\begin{corollary}\label{cor: dga}
Fix a GNS $s$ over $R$. Then the complex $\sdr{\AA^1}$ naturally admits the structure of a (noncommutative) differential graded $R$-algebra.
\end{corollary}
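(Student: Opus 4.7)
Since $\sdr{\AA^1}$ is concentrated in cohomological degrees $0$ and $1$, equipping it with a DGA structure amounts to specifying an associative ring structure on the degree-$0$ part, an $R[x]$-bimodule structure on the degree-$1$ part, and verifying that $\nabla_s$ is a derivation. The structure suggested by the $s$-product rule is the following: take $R[x]$ with its standard commutative multiplication in degree $0$, and give $R[x]\,dx$ the bimodule structure with right action $(h\,dx)\cdot g := hg\,dx$ (ordinary multiplication) and left action $f\cdot(h\,dx) := (f\star_s h)\,dx$, where $\star_s$ is the bilinear operation of \cref{thm: s-product rule}. The higher cross products vanish for degree reasons. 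Taking $d(f) := \nabla_s(f)\,dx$ in degree $0$, one has $d^2 = 0$ automatically since $d$ vanishes in degree $1$, and the Leibniz rule $d(fg) = d(f)\cdot g + f\cdot d(g)$ is a literal rewriting of the $s$-product rule, $\nabla_s(fg)\,dx = \nabla_s(f) g\,dx + (f\star_s \nabla_s(g))\,dx$.

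Associativity in degree $0$, of the right action, and the Leibniz identity are therefore immediate. The main thing to verify is the associativity of the left action, namely the identity
\[
(f_1 f_2)\star_s h \;=\; f_1\star_s (f_2\star_s h)
\]
for all $f_1, f_2, h \in R[x]$. By $R$-bilinearity, it suffices to check this on monomials $x^{a_1}, x^{a_2}, x^b$, where after unpacking the definition $x^a\star_s x^b = c_s(a+b+1,a)\,x^{a+b}$ the problem reduces to a relation among the coefficients $c_s(n,k) = (s(n)-s(k))/s(n-k)$. My plan is to derive this relation by applying the $s$-product rule to the triple product $f_1 f_2 g$ associated in the two possible ways (as $(f_1 f_2) g$ and as $f_1(f_2 g)$) and comparing: the associativity of ordinary multiplication on $R[x]$, together with choosing $g$ so that $\nabla_s(g)$ produces the desired monomial, should force the required identity on $c_s$-values.

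The main obstacle will be precisely this bimodule-associativity check: the identity on $c_s$-values is not a formal consequence of the definition of $\star_s$, and its verification will rely on careful manipulation of the divisibility axiom $s(n-k)\mid s(n)-s(k)$ built into the definition of a GNS, together with the cancellation properties of the differences $s(n)-s(k)$. Once this step is established, the remaining DGA axioms (strict associativity of the total product and the graded Leibniz rule) assemble from the pieces above without further work.
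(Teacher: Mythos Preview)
Your setup matches the paper's exactly: the paper also equips $R[x]\,dx$ with the standard right $R[x]$-action, the left action $f\cdot(h\,dx) = (f\star_s h)\,dx$, and then appeals to the $s$-product rule for the Leibniz identity. The paper's argument is in fact briefer than yours and does not address any of the associativity checks you raise.

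You correctly observe that the left-action identity $(f_1 f_2)\star_s h = f_1\star_s(f_2\star_s h)$ must be verified for this to define an honest associative graded ring. However, there is a genuine gap. First, you also need the mixed associativity $(f\star_s h)\cdot g = f\star_s(hg)$, which you do not mention. Second, and more seriously, neither identity holds for a general GNS. Take any GNS over $\Z$ extending the partial GNS $s(0),\dots,s(4) = 0,1,2,3,10$ (such extensions exist by \cref{thm: recursive construction of generalized n-series}); then
\[
(x\cdot x)\star_s x = c_s(4,2)\,x^3 = 4x^3, \qquad x\star_s(x\star_s x) = c_s(3,1)\,c_s(4,1)\,x^3 = 1\cdot 3\,x^3 = 3x^3,
\]
so left-associativity already fails, and similarly $(x\star_s x)\cdot x = x^3$ while $x\star_s(x\cdot x) = 3x^3$, so the bimodule compatibility fails too. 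Your proposed strategy of expanding $\nabla_s(f_1 f_2 g)$ in the two possible ways cannot rescue this: after cancelling the common $\nabla_s(f_1)f_2 g$ term one obtains only a relation \emph{between} the two associativity defects, namely $(f_1\star_s\nabla_s f_2)g + (f_1 f_2)\star_s\nabla_s g = f_1\star_s(\nabla_s(f_2)\,g) + f_1\star_s(f_2\star_s\nabla_s g)$, not the vanishing of either one separately. Thus, with the standard (associative) meaning of ``differential graded algebra,'' the structure you and the paper describe does not exist for arbitrary GNS; your plan to prove associativity from the GNS axioms would necessarily fail.
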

\begin{proof}
Define a left and right $\sdr{\AA^1}^0 = R[x]$-module structure on $\sdr{\AA^1}^1$ as follows: the right module structure is the obvious one, and the left module structure is given by $g(x) \cdot f(x) dx = g \star_s f(x) dx$. Then the $s$-Leibniz rule of \cref{thm: s-product rule} produces a $\sdr{\AA^1}^0$-bimodule structure on $\sdr{\AA^1}^1$ such that the $s$-derivative satisfies the Leibniz rule; this is precisely the structure of a differential graded $R$-algebra.
\end{proof}
\subsection{$s$-analogues of the Poincar\'e lemma and Cartier isomorphism}
\label{sec: s-Cartier}

The Poincar\'e lemma says that over a field $k$ of characteristic zero, the cohomology of the de Rham complex is concentrated in degree zero (where it is isomorphic to $k$). A version of this statement is also true over $\Z$. Namely, if $\Z\pdb{x} = \Z[x, \frac{x^n}{n!}]_{n\geq 0}$ denotes the divided power envelope of $\Z[x]$, then the cohomology of the complex $\Omega^\bull_{\Z[x]/\Z} \otimes_{\Z[x]} \Z\pdb{x}$ is concentrated in degree zero (where it is isomorphic to $\Z$). This admits a straightforward generalization to the $s$-de Rham complex:
\begin{prop}[$s$-Poincar\'e lemma]\label{prop: s-poincare}
Let $s$ be a GNS over $R$, and let $R\pdb{x}_s$ denote the ring $R[x, \frac{x^n}{n!_s}]_{n\geq 0}$. Then the cohomology of the complex $\sdr{\AA^1} \otimes_{R[x]} R\pdb{x}_s$ is concentrated in degree zero, where it is isomorphic to $R$.
\end{prop}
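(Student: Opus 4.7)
The plan is to reduce the computation to inspection after choosing a suitable $R$-basis of $R\pdb{x}_s$. First I would verify that $R\pdb{x}_s$ is free as an $R$-module on $\{x^n/n!_s\}_{n \geq 0}$. Linear independence follows from the inclusion $R\pdb{x}_s \hookrightarrow R[1/s][x]$, which is valid because each $n!_s$ is not a zero-divisor (by condition (2) in \cref{def: generalized n-series}). The $R$-span of this set is already a subring, since
$$\frac{x^m}{m!_s} \cdot \frac{x^n}{n!_s} = \binom{m+n}{m}_s \cdot \frac{x^{m+n}}{(m+n)!_s},$$
and $\binom{m+n}{m}_s \in R$ by \cref{thm: s-Pascal identity}; as this span contains every generator of $R\pdb{x}_s$, it equals $R\pdb{x}_s$.

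Next I would compute the differential of the tensored complex on this basis. The $R[1/s]$-linear extension of $\nabla_s$ to $R[1/s][x]$ restricts to an $R$-linear endomorphism of $R\pdb{x}_s$, because
$$\nabla_s\!\left(\frac{x^n}{n!_s}\right) = \frac{s(n)\, x^{n-1}}{n!_s} = \frac{x^{n-1}}{(n-1)!_s} \quad (n \geq 1), \qquad \nabla_s(1) = 0.$$
Hence $\sdr{\AA^1} \otimes_{R[x]} R\pdb{x}_s$ becomes the two-term complex $R\pdb{x}_s \xrightarrow{\nabla_s} R\pdb{x}_s\, dx$ in which $\nabla_s$ is simply the ``left shift'' operator on the free $R$-basis, annihilating the basis element in position $0$.

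From here the cohomology is immediate: a shift operator on a free module with basis $\{e_0, e_1, \dots\}$ sending $e_n \mapsto e_{n-1}$ and $e_0 \mapsto 0$ has kernel $R \cdot e_0$ and is surjective, so $H^0 = R$ and $H^1 = 0$. I do not anticipate any real obstacle; the only non-bookkeeping input is the $R$-integrality of the $s$-binomial coefficients from \cref{thm: s-Pascal identity}, which is what makes the divided-power basis of $R\pdb{x}_s$ genuinely $R$-free in the first place.
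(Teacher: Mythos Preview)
Your proposal is correct and follows the same core idea as the paper: both arguments hinge on the identity $\nabla_s(x^n/n!_s) = x^{n-1}/(n-1)!_s$, which makes $\nabla_s$ a shift operator on the divided-power basis. Your version is more thorough in that you explicitly verify $\{x^n/n!_s\}_{n\geq 0}$ is a free $R$-basis of $R\pdb{x}_s$ (invoking \cref{thm: s-Pascal identity} for closure under multiplication), whereas the paper leaves this implicit and simply reads off surjectivity and the kernel from the shift formula.
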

\begin{proof}
We need to show that the $R$-linear map
$$R\pdb{x}_s \xar{\nabla_s} R\pdb{x}_s dx$$
is surjective, and has kernel $R$. Surjectivity follows from the observation that $\nabla_s \frac{x^n}{n!_s} = \frac{x^{n-1}}{(n-1)!_s}$; this also implies that the kernel of $\nabla_s$ is precisely the $R$-submodule of $R\pdb{x}_s$ generated by the constants.
\end{proof}
\begin{remark}
Note that the ring $R\pdb{x}_s$ is nonzero, since the elements $n!_s\in R$ are not zero-divisors; in fact, $R\pdb{x}_s$ is a subring of $R[1/s][x]$.
\end{remark}
The $s$-de Rham complex also satisfies an analogue of the Cartier isomorphism. 
\begin{recall}
The Cartier isomorphism says that if $A$ is a smooth $\FF_p$-algebra, there is a canonical isomorphism $\Omega^i_{A/\FF_p} \cong \H^i(\Omega^\bull_{A/\FF_p})$. If $\varphi$ denotes the Frobenius on $R$, this isomorphism is roughly given by ``$\frac{\varphi}{p^i}$''. When $A = \FF_p[x]$, one can interpret the Cartier isomorphism as giving a canonical isomorphism
$$\Omega^i_{\Z[x^p]/\Z} \otimes_\Z \FF_p \cong \H^i(\Omega^\bull_{\Z[x]/\Z} \otimes_\Z \FF_p)$$
sending $d(x^p) \mapsto [x^{p-1} dx]$.
In \cite[Proposition 3.4]{scholze-q-def}, Scholze proves a $q$-analogue of the Cartier isomorphism. Let $\Z[\zeta_p]$ denote the quotient $\Z\pw{q-1}/[p]_q$, so that $\zeta_p$ denotes a primitive $p$th root of unity. Then there is a canonical isomorphism
$$\Omega^i_{\Z[x^p]/\Z} \otimes_\Z \Z[\zeta_p] \cong \H^i(q\Omega^\bull_{\Z[x]/\Z} \otimes_{\Z\pw{q-1}} \Z[\zeta_p]).$$
\end{recall}
Both of these results admit an $s$-analogue.
\begin{prop}\label{prop: s-cartier}
Let $s$ be a GNS over $R$ such that $s(1)$ is a unit in $R/s(p)$. Then there is a canonical isomorphism
$$\sdr{\AA^1}^i \otimes_R R/s(p) \cong \H^i(\sdr{\AA^1} \otimes_R R/s(p))$$
sending $x^n\mapsto x^{np}$ in degree zero and $x^n dx \mapsto [x^{np} x^{p-1} dx]$.
\end{prop}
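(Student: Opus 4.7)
The plan is to compute the kernel and image of $\nabla_s$ after base change to $R/s(p)$ directly on the monomial basis, and then observe that these coincide with the images of the prescribed maps.

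The key preliminary is a dichotomy for the reduction of $s(n)$ modulo $s(p)$. If $p \mid n$, then \cref{lem: divisibility of generalized n-series} gives $s(p) \mid s(n)$, so $s(n) \equiv 0 \pmod{s(p)}$. If $p \nmid n$, write $n = pk + r$ with $1 \le r < p$; then \cref{lem: congruence of generalized n-series} gives $s(n) \equiv s(r) \pmod{s(p)}$, and the argument of \cref{lem: s of a unit mod n is a unit mod s(n)} (whose proof only requires that $s(1)$ be a unit in the target $R/s(p)$, rather than in $R$ itself) shows that $s(r)$ is a unit in $R/s(p)$. Hence the image of $s(n)$ in $R/s(p)$ is zero if $p \mid n$ and a unit otherwise.

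For $H^0$: the kernel of $\nabla_s \otimes_R R/s(p) \colon R/s(p)[x] \to R/s(p)[x]\, dx$ consists of those $\sum_n a_n x^n$ with $a_n \cdot s(n) \equiv 0 \pmod{s(p)}$ for all $n \ge 1$, which by the dichotomy forces $a_n = 0$ whenever $p \nmid n$. Thus $H^0 = R/s(p)[x^p] \subseteq R/s(p)[x]$, and the assignment $x^n \mapsto x^{np}$ is the obvious $R/s(p)$-linear isomorphism from $R/s(p)[x]$ onto this subring. For $H^1$: the image of $\nabla_s \otimes_R R/s(p)$ is the $R/s(p)$-span of $\{s(n)\, x^{n-1}\, dx : n \ge 1\}$, which by the dichotomy equals the span of $\{x^m\, dx : m \ge 0,\ m \not\equiv p - 1 \pmod p\}$. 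Therefore $H^1$ is a free $R/s(p)$-module on $\{x^{pk + p - 1}\, dx\}_{k \ge 0}$, and the prescribed map $x^n\, dx \mapsto [x^{np + p - 1}\, dx]$ is the induced isomorphism.

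No step is genuinely substantive beyond the dichotomy on $s(n)$ mod $s(p)$; the entire argument reduces to a direct monomial computation. The only mildly delicate point is confirming that \cref{lem: s of a unit mod n is a unit mod s(n)} still applies given that $s(1)$ is assumed a unit only in $R/s(p)$ and not in $R$, which is immediate from inspecting the proof of that lemma.
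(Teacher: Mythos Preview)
Your proof is correct and follows essentially the same approach as the paper: both compute $H^0$ and $H^1$ directly on the monomial basis via the dichotomy that $s(n)$ is zero or a unit in $R/s(p)$ according as $p\mid n$ or not. Your explicit remark that \cref{lem: s of a unit mod n is a unit mod s(n)} only needs $s(1)$ to be a unit in $R/s(p)$ (rather than in $R$) is a useful clarification, since the paper invokes that lemma under the weaker hypothesis without comment.
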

\begin{proof}
Let us first compute $\H^0(\sdr{\AA^1} \otimes_R R/s(p))$, i.e., the kernel of $\nabla_s$. Observe that if $a\in R$, then $ax^n \mapsto as(n) x^{n-1} dx$. If $p \mid n$, then $s(p) \mid s(n)$, so that $\nabla_s(ax^n) = 0\in R[x]/s(p)$. If $p\nmid n$, it follows from \cref{lem: s of a unit mod n is a unit mod s(n)} that $\nabla_s(ax^n) = 0\in R[x]/s(p)$ if and only if $s(p) \mid a$. This implies that $\H^0(\sdr{\AA^1} \otimes_R R/s(p)) \cong R[x^p]/s(p)$.

To calculate $\H^1(\sdr{\AA^1} \otimes_R R/s(p))$, i.e., the cokernel of $\nabla_s$, we need to determine the image of $\nabla_s$. If $ax^n dx$ is in the image of $\nabla_s$ for some $a\in R$, then there must be some $b\in R$ such that $b s(n+1) = a$. If $p \mid n+1$, it follows that $a = 0\in R/s(p)$; if $p\nmid n+1$, then $s(n+1)$ is a unit (by the preceding discussion), so that $b = \frac{a}{s(n+1)}$. It follows that the image of $\nabla_s$ is precisely $\bigoplus_{p\nmid n+1} R/s(p)\{x^n dx\}$, so that 
$$\coker(\nabla_s) = \bigoplus_{n\geq 1} R/s(p)\{x^{np-1} dx\} \cong \bigoplus_{n\geq 1} R/s(p)\{x^{p(n-1)} \cdot x^{p-1} dx\}.$$
This implies that $\H^1(\sdr{\AA^1} \otimes_R R/s(p)) \cong R[x^p]/s(p) d(x^p)$, as desired.
\end{proof}
In fact, the classical and $q$-Cartier isomorphisms are special cases of a more general result due to Berthelot and Ogus \cite{berthelot-ogus}. Let us now review this statement; we will then state and prove the analogue for generalized $n$-series.
\begin{recall}
Let $R$ be a ring, and let $f\in A$ be a non-zero-divisor. If $M^\bull$ is a cochain complex of $R$-modules which is termwise $f$-torsionfree, the \textit{d\'ecalage} $\eta_f M^\bull$ is the subcomplex of $M^\bull[1/f]$ defined via
$$(\eta_f M)^i = \{x\in f^i M^i \mid dx\in f^{i+1} M^{i+1}\}.$$
See \cite[Section 6]{bms-i} and \cite{berthelot-ogus}. One basic property of the d\'ecalage construction is the following. Let $\H^\bull(M/f)$ denote the complex whose underlying graded abelian group is $\bigoplus_{i\in \Z} \H^i(M/f)$, and where the differential is given by the $f$-Bockstein $\beta: \H^i(M/f) \to \H^{i+1}(M/f)$. Then there is a natural isomorphism of complexes
\begin{equation}\label{eq: coh-decalage}
    \eta_f(M)/f \xar{\sim} \H^\bull(M/f).
\end{equation}
We will only need the case when $M^\bull$ is termwise $f$-torsionfree; but let us mention that $\eta_f$ preserves quasi-isomorphisms, and one can extend $\eta_f$ to a (non-exact) functor $L\eta_f: D(R) \to D(R)$ on the the derived category of $R$.

Let $k$ be a perfect field of characteristic $p>0$.
A very special case of a result of Berthelot and Ogus (in \cite{berthelot-ogus}) says that if $W(k)$ is the ring of Witt vectors of $k$ and $A$ is a $W(k)$-algebra, there is a Frobenius\footnote{The existence of a Frobenius on $\Omega_A$ is not obvious in general, and depends on the existence of crystalline cohomology.} semilinear isomorphism $\Omega_A \xar{\sim} L\eta_p \Omega_A$. Suppose for simplicity that $\Omega_A$ is $p$-torsionfree; applying \cref{eq: coh-decalage} with $f = p$ then defines an isomorphism of complexes
\begin{equation}\label{eq: decalage-dR}
    \eta_p(\Omega_A)/p \xar{\sim} \H^\bull(\Omega_A/p).
\end{equation}
Note that if we write $A_0 = A/p$, then $\Omega_A/p \cong \Omega_{A_0/k}$. The Frobenius semilinear isomorphism $\Omega_A \xar{\sim} L\eta_p \Omega_A$ gives a Frobenius semilinear equivalence $\Omega_{A_0/k} \xar{\sim} \eta_p(\Omega_{A_0})/p$. Comparing the left and right-hand sides of \cref{eq: decalage-dR} recovers the Cartier isomorphism for $A_0$.

A similar result was proved in \cite[Theorem 1.16(4)]{bhatt-scholze} for the $q$-de Rham complex: namely, if $A$ is a smooth $\Z_p[\zeta_p]$-algebra, then there is a Frobenius semilinear equivalence $q\Omega_A \xar{\sim} L\eta_{[p]_q} q\Omega_A$. As above, using \cref{eq: coh-decalage} with $f = [p]_q$ recovers the $q$-analogue of the Cartier isomorphism.
\end{recall}
As one might expect, there is a d\'ecalage result for the $s$-de Rham complex, too:
\begin{prop}\label{prop: decalage}
Fix a GNS $s$ over $R$ such that:
\begin{enumerate}
    \item there is a ring endomorphism $\varphi: R \to R$ which sends $s(n) \mapsto \frac{s(np)}{s(p)}$.
    \item $s(1)$ is a unit in $R/s(p)$, and $R$ is $s(p)$-adically complete.
\end{enumerate}
Write $\AA^{1,(p)} = \spec R[x^p]$, and define a map $\Phi: \sdr{\AA^{1,(p)}} \to \sdr{\AA^1}$ via
\begin{align*}
    R[x^p] & \xar{\varphi} R[x], \ \text{in degree }0, \\
    R[x^p] d(x^p) & \xar{\varphi, d(x^p) \mapsto s(p) x^{p-1} dx} R[x] dx, \ \text{in degree }1.
\end{align*}
Then $\Phi$ factors through a quasi-isomorphism $\varphi^\ast \sdr{\AA^{1,(p)}} \xar{\sim} \eta_{s(p)} \sdr{\AA^1}$.
\end{prop}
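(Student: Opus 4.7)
The plan is to verify that $\Phi$ factors through the subcomplex $\eta_{s(p)}\sdr{\AA^1} \subset \sdr{\AA^1}$ and is a chain map, and then to show that the induced target-$R$-linear map $\tilde\Phi\colon \varphi^*\sdr{\AA^{1,(p)}} \to \eta_{s(p)}\sdr{\AA^1}$ is a quasi-isomorphism by a direct component-wise computation of $H^0$ and $H^1$ on each side.

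The factorization of $\Phi$ through $\eta_{s(p)}\sdr{\AA^1}$ is immediate in degree one (every image is manifestly divisible by $s(p)$), and in degree zero reduces to the observation that $\nabla_s(x^{np}) = s(np)\,x^{np-1}$ lies in $s(p)R[x]$; this follows from \cref{lem: divisibility of generalized n-series} applied to $p\mid np$. The chain-map property $\nabla_s \circ \Phi = \Phi \circ \nabla_s$ then amounts to the identity $\varphi(s(n))\,s(p) = s(pn)$, which is precisely the hypothesis on $\varphi$.

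For the quasi-isomorphism, the crucial ingredient is that since $R$ is $s(p)$-adically complete and $s(1)$ is a unit in $R/s(p)$, a routine adaptation of \cref{lem: s of a unit mod n is a unit mod s(n)} (which only needs $s(1)$ to be a unit in $R/s(p)$ rather than in $R$) yields that $s(k)$ is a unit in $R/s(p)$, hence a unit in $R$ itself, for every $k$ with $p\nmid k$. With this in hand, I would compute cohomology component by component: on the $\eta_{s(p)}$-side, the image of $\nabla_s\colon (\eta_{s(p)}\sdr{\AA^1})^0 \to (\eta_{s(p)}\sdr{\AA^1})^1$ at coordinate $x^{k-1}\,dx$ with $p\nmid k$ is all of $s(p)R$ (so contributes nothing to $H^1$), while at $x^{pn-1}\,dx$ it equals $s(pn)R$, yielding an $H^1$-contribution $s(p)R/s(pn)R \cong R/(s(pn)/s(p))R$. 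Identifying $\varphi^*\sdr{\AA^{1,(p)}}$ as a complex of target-$R$-modules with $R[y] \to R[y]\,dy$ and differential $y^n \mapsto \varphi(s(n))\,y^{n-1}\,dy = (s(pn)/s(p))\,y^{n-1}\,dy$ (using that $\varphi^*R \cong R$ via $r\otimes s \mapsto r\varphi(s)$, extended coefficient-wise to the polynomial ring), its $H^1$ at $y^{n-1}\,dy$ is likewise $R/(s(pn)/s(p))R$, and $\tilde\Phi$ sends this generator to $s(p)\,x^{pn-1}\,dx$, which is a generator of the target summand. The $H^0$-comparison is tautological: both sides equal $R$, and $\tilde\Phi$ induces the identity after unwinding the $\varphi$-twist.

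The main obstacle is the delicate role of the $s(p)$-adic completeness hypothesis, which is what upgrades ``unit modulo $s(p)$'' to ``unit in $R$'' for $s(k)$ with $p\nmid k$; without this, the non-$p$-multiple coordinates of $(\eta_{s(p)}\sdr{\AA^1})^1$ would contribute extra, nonzero summands to $H^1$ that obstruct the isomorphism. A more conceptual alternative would be to first verify the quasi-isomorphism modulo $s(p)$ by composing the d\'ecalage identity $(\eta_{s(p)}\sdr{\AA^1})/s(p) \simeq \H^\bullet(\sdr{\AA^1}/s(p))$ from \cref{eq: coh-decalage} with the $s$-Cartier isomorphism of \cref{prop: s-cartier}, and then promote this integrally via derived Nakayama using the completeness hypothesis; however, the direct computation above is likely the cleanest route.
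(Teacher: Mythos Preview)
Your proposal is correct and follows essentially the same approach as the paper: both compute $\eta_{s(p)}\sdr{\AA^1}$ and the cohomology of each side directly, using the key fact that $s(k)$ is a unit in $R$ for $p\nmid k$ (from $s(p)$-adic completeness together with \cref{lem: s of a unit mod n is a unit mod s(n)}), and then match the $H^1$-summands via multiplication by $s(p)$. Your explicit verification of the chain-map identity and your concrete identification of $\varphi^\ast\sdr{\AA^{1,(p)}}$ as the complex $R[y]\to R[y]\,dy$ with differential $y^n\mapsto (s(np)/s(p))\,y^{n-1}\,dy$ are details the paper leaves implicit, but the substance of the argument is the same.
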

\begin{proof}
Since $\sdr{\AA^1}$ is $s(p)$-torsionfree, we can directly compute the complex $\eta_{s(p)} \sdr{\AA^1}$. If $f(x) = \sum_{n\geq 0} a_n x^n \in R[x]$, then 
$$\nabla_s f(x) = \sum_{n\geq 0} a_n s(n) x^{n-1}.$$
Since $s(p) \mid s(pj)$ for any $j\geq 0$, we see that $\nabla_s f(x)\in s(p) R[x]$ if and only if $s(p) \mid a_n$ for $p \nmid n$. Therefore,
\begin{align*}
    \eta_{s(p)} \sdr{\AA^1} & = \left(R[x^p] + s(p) R[x] \xar{\nabla_s} s(p) R[x] dx\right).
\end{align*}
The map $\Phi$ clearly factors through the inclusion $\eta_{s(p)} \sdr{\AA^1} \subseteq \sdr{\AA^1}$. It remains to check that the map $\varphi^\ast \sdr{\AA^{1,(p)}} \to \eta_{s(p)} \sdr{\AA^1}$ induces an isomorphism on cohomology. Observe that
\begin{align*}
    \H^0(\sdr{\AA^{1,(p)}}) & \cong R, \\
    \H^1(\sdr{\AA^{1,(p)}}) & \cong \bigoplus_{n\geq 1} R/s(n) \{(x^p)^{n-1} d(x^p)\},
\end{align*}
and similarly
\begin{align*}
    \H^0(\eta_{s(p)} \sdr{\AA^1}) & \cong R, \\
    \H^1(\eta_{s(p)} \sdr{\AA^1}) & \cong \bigoplus_{n\geq 1} (s(p))/(s(np)) \{x^{np-1} dx\} \oplus \bigoplus_{p\nmid m} (s(p))/(s(m) s(p)) \{x^{m-1} dx\}.
\end{align*}
The ring endomorphism $\varphi$ of $R$ sends $s(n) \mapsto \frac{s(np)}{s(p)}$, and $s(p)$ is a non-zero-divisor in $R$, we see that $\varphi$ descends to an isomorphism 
$$\varphi^\ast (R/s(n)) \xar{\sim} R/\tfrac{s(np)}{s(p)} \xar{\sim} (s(p))/(s(np)).$$
If $p\nmid m$, then $s(m)$ is a unit in $R/s(p)$ by \cref{lem: s of a unit mod n is a unit mod s(n)}; since $R$ is $s(p)$-adically complete, this implies that $s(m)$ is a unit in $R$ itself. It follows that $0 \cong R/s(m) \xar{\sim} (s(p))/(s(m) s(p))$.
Putting these together, we see that the map $\varphi^\ast \sdr{\AA^{1,(p)}} \xar{\sim} \eta_{s(p)} \sdr{\AA^1}$ is a quasi-isomorphism.
\end{proof}

In \cref{prop: decalage}, the condition that $R$ be $s(p)$-adically complete is rather powerful. For instance, one of the most basic tools in $p$-adic mathematics is the Legendre formula; this admits an analogue for generalized $n$-series, too.
\begin{prop}[$s$-analogue of Legendre formula]\label{prop: legendre}
Let $s$ be a GNS over $R$ satisfying the hypotheses of \cref{prop: decalage}. For any $n\geq 0$, we have
\begin{align*}
    n!_s & = u \prod_{j\geq 1} \varphi^{j-1}(s(p))^{\lfloor n/p^j\rfloor}
\end{align*}
for some unit $u\in R^\times$.
\end{prop}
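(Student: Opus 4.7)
The plan is to induct on $n$, splitting the product $n!_s = \prod_{k=1}^{n} s(k)$ into terms indexed by $k$ with $p \nmid k$ and terms with $p \mid k$, and then recognizing the second factor as $s(p)^{\lfloor n/p\rfloor}$ times $\varphi$ applied to a smaller factorial.

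First, I would dispose of the units. For any $k \ge 1$ with $p \nmid k$, the class of $k$ is a unit in $\Z/p$, so by \cref{lem: s of a unit mod n is a unit mod s(n)} the element $s(k)$ is a unit in $R/s(p)$. The $s(p)$-adic completeness of $R$ forces any element of $1 + s(p)R$ to be a unit (inverse given by the convergent geometric series), which upgrades $s(k)$ to a unit in $R$ itself. Consequently $\prod_{1\le k\le n,\, p\nmid k} s(k) \in R^{\times}$, and the base case $n<p$ of the induction is automatic since both sides equal a unit (the product on the right-hand side is empty).

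For the inductive step, on the $p$-divisible part I would write $k = pm$ with $1 \le m \le \lfloor n/p\rfloor$ and use the defining relation $\varphi(s(m)) = s(pm)/s(p)$ (valid since $s(p)$ is a non-zero-divisor) to obtain
\begin{equation*}
\prod_{m=1}^{\lfloor n/p\rfloor} s(pm) \;=\; s(p)^{\lfloor n/p\rfloor}\,\varphi\!\left(\lfloor n/p\rfloor!_s\right).
\end{equation*}
Applying the induction hypothesis to $\lfloor n/p\rfloor!_s$ gives $\lfloor n/p\rfloor!_s = u'\prod_{j\ge 1}\varphi^{j-1}(s(p))^{\lfloor n/p^{j+1}\rfloor}$ for some $u'\in R^\times$, and applying $\varphi$ (which preserves units) and reindexing $j\mapsto j-1$ converts this into a product over $j\ge 2$ of $\varphi^{j-1}(s(p))^{\lfloor n/p^j\rfloor}$. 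Combining with the unit coming from the $p\nmid k$ factor and the isolated $s(p)^{\lfloor n/p\rfloor}$ (which supplies the $j=1$ term) gives exactly the asserted formula.

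The only place any real care is needed is the unit upgrade step, i.e., concluding from \cref{lem: s of a unit mod n is a unit mod s(n)} plus $s(p)$-adic completeness that the $p$-prime factors contribute an honest unit of $R$ rather than merely a unit modulo $s(p)$; everything else is bookkeeping. Note also that the product on the right-hand side is finite because $\lfloor n/p^j\rfloor = 0$ once $p^j > n$, so no convergence issues arise.
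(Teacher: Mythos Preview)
Your proof is correct and follows essentially the same approach as the paper's. Both arguments split off the factors $s(k)$ with $p\nmid k$ as units (using \cref{lem: s of a unit mod n is a unit mod s(n)} together with $s(p)$-adic completeness), rewrite the remaining product $\prod_{m} s(pm)$ as $s(p)^{\lfloor n/p\rfloor}\varphi(\lfloor n/p\rfloor!_s)$ via the defining property of $\varphi$, and then recurse; your write-up is simply more explicit about the induction and the reindexing than the paper's one-line ``inductively strip powers of $p$ off of $n$''.
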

\begin{proof}
The argument is a straightforward adaptation of \cite[Lemma 12.6]{bhatt-scholze}. Since $R$ is $s(p)$-adically complete, we know that if $p\nmid m$, then $s(m)$ is a unit in $R$. This implies that
\begin{align*}
    (np)!_s & = u \prod_{i=1}^n s(np) = u \prod_{i=1}^n \left(\frac{s(np)}{s(p)} \cdot s(p)\right)\\
    & = u s(p)^n \prod_{i=1}^n \varphi(s(n)) = u s(p)^n \varphi(n!_s),
\end{align*}
where $u = \prod_{1\leq j \leq np, p\nmid j} s(j)$ is a unit in $R$. Using the above identity to inductively strip powers of $p$ off of $n$ produces the desired claim.
\end{proof}
This implies the following analogue of \cite[Lemma 12.5]{bhatt-scholze}, which gives a criterion for admitting ``$s$-divided powers''.
\begin{corollary}\label{cor: s-divided powers criterion}
Let $s$ be a GNS over $R$ satisfying the hypotheses of \cref{prop: decalage}, and suppose that for any $n\geq 0$, $\varphi(n!_s)$ is a non-zero-divisor in $R/s(p)$. Let $A$ be an $s(p)$-completely flat $R$-algebra equipped with a $R$-linear multiplicative map $\phi: \varphi^\ast A \to A$ and an element $x\in A$ such that $\varphi(x) = x^p$, and $\varphi(x)$ is divisible by $s(p)$.
Then $n!_s \mid x^n$, i.e., $\frac{x^n}{n!_s}$ is well-defined in $A$.
\end{corollary}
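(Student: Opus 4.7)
The plan is to proceed by strong induction on $n$, with the inductive step driven by the $s$-Legendre formula (\cref{prop: legendre}). Writing $n = pm + r$ with $0 \leq r < p$ and $m \geq 1$, a short check shows $\lfloor n/p^j \rfloor = \lfloor m/p^{j-1} \rfloor$ for all $j \geq 1$, so \cref{prop: legendre} gives $n!_s = u \cdot s(p)^m \cdot \varphi(m!_s)$ for some unit $u \in R^\times$. This identity is what lets me reduce divisibility by $n!_s$ to divisibility by $s(p)^m$ together with divisibility by the Frobenius-twisted lower factorial. The base case $n < p$ is immediate: each $s(j)$ with $1 \leq j < p$ is a unit in $R/s(p)$ by \cref{lem: s of a unit mod n is a unit mod s(n)} (using $s(1) \in R^\times$, which follows from the hypothesis on $R/s(p)$ together with $s(p)$-adic completeness), hence a unit in $R$, so $n!_s$ is itself a unit.

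For the inductive step, by the inductive hypothesis applied to $m$ there exists $\tilde x \in A$ with $x^m = m!_s \cdot \tilde x$. Applying the $\varphi$-semilinear multiplicative map $\phi$ (and using $\phi(x) = x^p$) gives $x^{pm} = \phi(x^m) = \varphi(m!_s) \cdot \phi(\tilde x)$. On the other hand, the hypothesis that $s(p)$ divides $x^p$ lets one write $x^p = s(p) y$, whence $x^{pm} = (x^p)^m = s(p)^m y^m$. Equating the two expressions yields the central identity
$$\varphi(m!_s) \cdot \phi(\tilde x) = s(p)^m y^m \quad\text{in } A.$$
Once I can upgrade this to the statement $\phi(\tilde x) \in s(p)^m A$, writing $\phi(\tilde x) = s(p)^m w$ will give $x^n = x^{pm} \cdot x^r = \varphi(m!_s) s(p)^m w \cdot x^r = u^{-1} \, n!_s \cdot (w x^r)$, completing the induction.

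The hard part will be precisely this upgrade, and my plan here is an $m$-fold iterated descent along $s(p)$. First, the hypothesis that $\varphi(m!_s)$ is a non-zero-divisor in $R/s(p)$, combined with $s(p)$-complete flatness of $A$ over $R$, propagates to the statement that $\varphi(m!_s)$ is a non-zero-divisor in $A/s(p)$, and from there a standard bootstrap argument using that $s(p)$ is a non-zero-divisor in $A$ (again by complete flatness) shows $\varphi(m!_s)$ remains a non-zero-divisor in $A/s(p)^k$ for every $k \geq 1$. Setting $w_0 := \phi(\tilde x)$, the central identity reads $\varphi(m!_s) w_0 = s(p)^m y^m$; reducing mod $s(p)$ and using that $\varphi(m!_s)$ is a non-zero-divisor mod $s(p)$ forces $w_0 \in s(p) A$, so $w_0 = s(p) w_1$. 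Cancelling one $s(p)$ (permissible since $s(p)$ is a non-zero-divisor in $A$) gives $\varphi(m!_s) w_1 = s(p)^{m-1} y^m$, and iterating this descent exactly $m$ times produces $w_m \in A$ with $\phi(\tilde x) = s(p)^m w_m$, closing the argument.
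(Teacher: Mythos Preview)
Your proof is correct and follows essentially the same route as the paper's: reduce via the $s$-Legendre formula to showing $s(p)^m \varphi(m!_s) \mid x^{pm}$, obtain $\varphi(m!_s) \mid x^{pm}$ from the inductive hypothesis pushed through $\phi$, obtain $s(p)^m \mid x^{pm}$ from $s(p) \mid x^p$, and combine the two using that $\varphi(m!_s)$ is a non-zero-divisor modulo $s(p)$. The paper phrases the induction as ``$n!_s \mid x^n \Rightarrow (np)!_s \mid x^{np}$'' and simply asserts that the non-zero-divisor condition lets one pass from the two individual divisibilities to their product; your iterated descent modulo $s(p)$ is exactly the argument making that step explicit.
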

\begin{proof}
Because $s(j)$ is a unit in $R$ is $p\nmid j$, it suffices to show: for any $n\geq 0$, if $n!_s \mid x^n$, then $(np)!_s \mid x^{np}$. By (the proof of) \cref{prop: legendre}, it suffices to show that $\varphi(n!_s) s(p)^n \mid x^{np}$. Because $\varphi(n!_s)$ is a non-zero-divisor in $R/s(p)$ (by assumption), it is also a non-zero-divisor in $A/s(p)$ by flatness of $A$. It therefore suffices to show that $\varphi(n!_s)$ and $s(p)^n$ each individually divide $x^{np}$. Since $n!_s \mid x^n$, it is clear that $\varphi(n!_s) \mid \varphi(x^n) = x^{np}$. Since $s(p) \mid x^p$ by assumption, we also see that $s(p)^n \mid x^{np}$, as desired.
\end{proof}
\subsection{Formal group law $n$-series and the $s$-derivative}
\label{sec: formal group law n-series and the s-derivative}

Some of the most important (and accessible) examples of generalized $n$-series come from formal group laws. Throughout this section, we will fix a base commutative ring $R$.
\begin{recall}\label{recall: fgl-recall}
A ($1$-dimensional) \textit{formal group law} over a commutative ring $R$ is a two-variable power series $F(x,y)\in R\pw{x,y}$ such that $F(F(x,y),z) = F(x,F(y,z))$ and $F(x,y) \equiv x + y \pmod{(x,y)^2}$. It will sometimes be convenient to denote $F(x,y)$ by $x +_F y$. A morphism $f: F \to G$ of formal group laws is a power series $f(x)\in R\pw{x}$ such that $f(F(x,y)) = G(f(x), f(y))$; a morphism is called an isomorphism if it admits a compositional inverse.

If $n\geq 0$ is an integer, the $n$-series of $F$ is defined via the formula
$$[n]_F(t) = \overbrace{F(t, F(t, F(t, \dots F(t, t) \dots )))}^n = \overbrace{t +_F t +_F \cdots +_F t}^n.$$
This can be extended to all integers $n\in \Z$ by using the existence of inverses for the formal group law.
The $n$-series $[n]_F(t)$ is an element of $R\pw{t}$, and it is always divisible by $t$.  We will define $\ab{n}_F(t) := [n]_F(t) / t$ (where we agree that $\ab{0}_F(t) = 0$). Sometimes, it will be notationally convenient to simply write these as $[n]_F$ and $\ab{n}_F$ (it being implicit that these are functions of $t$).
If $R$ is an $\FF_p$-algebra, then either $[p]_F(t) = 0$ or $[p]_F(t) = \lambda t^{p^h} + O(t^{p^h + 1})$ for some $h>0$. If $v_j$ denotes the coefficient of $t^{p^j}$ in $[p]_F(t)$, then $F$ is said to be of \textit{height $\geq n$} if $v_j = 0$ for $j<n$; if $v_n$ is a unit, then $F$ is said to be of \textit{height $n$}.

If $\QQ\subseteq R$, then every formal group law $F(x,y)$ is isomorphic to the additive formal group law via the \textit{logarithm}. Let $F_y(x,y) = \partial_y F(x,y)$; then, the logarithm is given by the integral
$$\ell_F(x) := \int^x_0 \frac{dt}{F_y(t,0)}.$$
We will write $\ce_F(x)$ to denote its compositional inverse, so that $F(x,y) = \ce_F(\ell_F(x) + \ell_F(y))$. Observe that $[n]_F(t) = \ce_F(n\ell_F(t))$ for any $n\in \Z$.
\end{recall}
\begin{example}\label{ex: fgl-examples}
Fix a base commutative ring $R$.
The polynomial $F(x,y) = x+y$ is known as the \textit{additive} formal group law, and $\ab{n}_F(t) = n$. The polynomial $F(x,y) = x+y+xy$ is known as the \textit{multiplicative} formal group law, and
$$\ab{n}_F(t) = \frac{(1+t)^n-1}{t} \in R\pw{t}.$$
Note that $\pdb{n}_F = [n]_q$, where we set $q = t+1$. The power series $F(x,y) = \frac{x+y}{1+xy}$ is known as the \textit{hyperbolic} formal group law (since it describes the addition law for $\tanh$), and a simple induction on $n$ shows that
$$\ab{n}_F(t) = \frac{1}{t} \frac{(1+t)^n - (1-t)^n}{(1+t)^n + (1-t)^n} = \frac{1}{q-1} \frac{q^n-(2-q)^n}{q^n+(2-q)^n} \in R\pw{q-1}.$$
\end{example}
\begin{remark}\label{rmk: rescaled-fgl}
Given a formal group law $F(x,y)$ over a (torsionfree, say) commutative ring $R$, one can define a ``rescaled'' formal group law $\tilde{F}(x,y)$ over $R\pw{t}$ which is characterized by the property that $\tilde{F}(x,y) = \frac{1}{t} F(xt, yt)$. Observe that over the special fiber (i.e., $t=0$), $\tilde{F}(x,y)$ degenerates to the additive formal group law.
Over $(R\otimes \QQ)\pw{t}$, the logarithm $\tilde{\ell}_F(x)$ is given by $\frac{1}{t} \ell_F(tx)$; note that this power series does not have polar terms in $t$, since $x \mid \ell_F(x)$ (and hence $tx \mid \ell_F(tx)$).
\end{remark}

We begin by showing that the map $n \mapsto [n]_F(t)$ is a GNS over $R\ps{t}$, as long as $F$ satisfies a mild condition. The existence of the power series $\ell_F$ is the main reason that GNS arising via formal group laws are particularly well-behaved.
\begin{prop}\label{prop: [n]_F(t) GNS}
Let $F$ be a formal group law over a ring $R$, and suppose that $[n]_F(t)\in R\pw{t}$ is not a zero-divisor for any $n > 0$. Define $s : \Z_{\ge 0} \to R\ps{t}$ by $s(n) = [n]_F(t)$. Then, $s$ is a GNS over $R\ps{t}$. Similarly, the function $s_F: \Z_{\geq 0} \to R\pw{t}$ sending $s_F(n) = \ab{n}_F(t)$ is a GNS over $R\pw{t}$.
\end{prop}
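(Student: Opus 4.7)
The plan is to verify the three defining conditions of a GNS for $s(n) = [n]_F(t)$, and then deduce the $s_F$ case by dividing by $t$. Condition (1), $s(0) = 0$, holds by the convention $[0]_F(t) = 0$, and condition (2) is the hypothesis. The substantive content is condition (3): for $n > k > 0$, I must show $[n-k]_F(t) \mid [n]_F(t) - [k]_F(t)$ in $R\pw{t}$.

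The key identity will be $[n]_F(t) = F([k]_F(t), [n-k]_F(t))$, obtained by splitting off the first $k$ summands from the iterated formal sum defining $[n]_F(t)$; this is a straightforward induction on $n$ using associativity of $F$. Granted this, everything reduces to a single structural claim about $F$: the power series $F(x,y) - x - y$ is divisible by $xy$ in $R\pw{x,y}$. If $F(x,y) - x - y = xy \cdot H(x,y)$ for some $H \in R\pw{x,y}$, substituting $x = [k]_F(t)$ and $y = [n-k]_F(t)$ then gives
$$[n]_F(t) - [k]_F(t) = [n-k]_F(t) \cdot \bigl(1 + [k]_F(t) \cdot H([k]_F(t), [n-k]_F(t))\bigr),$$
which exhibits the required divisibility, establishing condition (3).

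It remains to justify $xy \mid F(x,y) - x - y$. Since this is a power series in two variables, it suffices to verify $F(x,0) = x$ and $F(0,y) = y$. For the first, I plan to set $y = z = 0$ in the associativity relation $F(F(x,y),z) = F(x,F(y,z))$; using $F(0,0) = 0$ (which follows from $F(x,y) \equiv x+y \pmod{(x,y)^2}$) this yields $u(u(x)) = u(x)$ where $u(x) := F(x,0)$. Since $u(x) = x + O(x^2)$, $u$ is invertible under composition, and applying $u^{-1}$ forces $u(x) = x$; the identity $F(0,y) = y$ follows symmetrically. This is the one step requiring any real care, since the paper's definition of a formal group law does not build in the unit axiom explicitly.

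Finally, for $s_F(n) = \ab{n}_F(t) = [n]_F(t)/t$: well-definedness is immediate since $t \mid [n]_F(t)$, and non-zero-divisibility of $s_F(n)$ follows from that of $s(n)$ together with the fact that $t$ is a non-zero-divisor in $R\pw{t}$ (if $a \cdot s_F(n) = 0$, multiply by $t$ to get $a \cdot s(n) = 0$, hence $a = 0$). Dividing the displayed identity above by $t$ yields $s_F(n) - s_F(k) = s_F(n-k) \cdot \bigl(1 + [k]_F(t) \cdot H([k]_F(t), [n-k]_F(t))\bigr)$, verifying condition (3) for $s_F$.
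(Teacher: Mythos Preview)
Your proof is correct, and it takes a genuinely different route from the paper's.

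The paper argues inductively: setting $G(x) = F(t,x)$, the recursive definition gives $[n+1]_F - [k+1]_F = G([n]_F) - G([k]_F)$, and since $x - y \mid G(x) - G(y)$ for any power series $G$, one obtains $[n]_F - [k]_F \mid [n+1]_F - [k+1]_F$. Iterating and then setting $k = 0$ yields the GNS condition. This avoids the unit axiom entirely and never needs the global identity $[n]_F = F([k]_F,[n-k]_F)$.

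Your approach instead decomposes $[n]_F = F([k]_F,[n-k]_F)$ directly and uses the structural fact $xy \mid F(x,y) - x - y$; since the paper's definition of formal group law omits the unit axiom, you correctly derive $F(x,0) = x$ and $F(0,y) = y$ from associativity plus the leading-term condition. Your argument is more direct and has a hidden bonus: the identity you display actually shows $[k]_F \cdot [n-k]_F \mid [n]_F - [k]_F - [n-k]_F$, i.e., the quantity $C_s(a,b)$ of \cref{def: capital-CS-ab} lies in $R\pw{t}$. This is exactly the extra hypothesis \cref{eq: s-lucas-criterion} needed for the $s$-Lucas theorem, so your method proves more than was asked. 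The paper's approach, by contrast, is a touch more economical in its inputs but does not yield this stronger divisibility.
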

\begin{proof}
It is easy to see that if $s$ is a GNS, the same is true of $s_F$. Let us now show that $s$ is a GNS by checking the conditions of \cref{def: generalized n-series}.
Condition (1) is clear, since $[0]_F = 0$. Condition (2) is already assumed in the theorem statement.

For condition (3), let $G(x) \in R\ps{t}\ps{x}$ be the power series $F(t, x)$. Then, \[[n + 1]_F - [k + 1]_F = G([n]_F) - G([k]_F).\] Since $x - y$ divides $x^j - y^j$ for each $j\geq 0$, and $G$ is a power series, $x - y$ also divides $G(x) - G(y)$. In particular, $s(n)-s(k) = [n]_F - [k]_F$ divides $G([n]_F) - G([k]_F)$; but $G([n]_F) - G([k]_F) = [n + 1]_F - [k + 1]_F$ is precisely $s(n + 1) - s(k + 1)$, so $s(n)-s(k)$ divides $s(n+1)-s(k+1)$. Inducting, we conclude that $s(n)-s(k)$ divides $s(n+j) - s(k+j)$ for all $n, k, j \in \Z_{\ge 0}$. For $k = 0$, this gives $s(n) \mid s(n + j) - s(j)$ for all $n, j \in \Z_{\ge 0}$. After relabeling the indices, this becomes $s(n - k) \mid s(n) - s(k)$, which proves condition (3).
\end{proof}
\begin{lemma}
    Let $F$ be a formal group law over a ring $R$. If $n \in \Z_{> 0}$ is not a zero-divisor in $R$ (e.g., $R$ is torsionfree), then $[n]_F$ and $\ab{n}_F$ are not zero-divisors in $R\ps{t}$.
\end{lemma}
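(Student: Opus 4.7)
The plan is to reduce both statements to the observation that the constant term of $\ab{n}_F(t)$ is precisely $n$, and then to exploit the standard fact that an element of $R\ps{t}$ with non-zero-divisor leading coefficient is itself not a zero-divisor.

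First I would recall that the defining condition $F(x,y) \equiv x+y \pmod{(x,y)^2}$ gives, by an easy induction on $n$, the congruence $[n]_F(t) \equiv nt \pmod{t^2}$. Dividing by $t$, this means $\ab{n}_F(t) = n + t g(t)$ for some $g(t) \in R\ps{t}$; in particular the constant term of $\ab{n}_F(t)$ is $n$.

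Next I would prove the key lemma: if $a_0 \in R$ is not a zero-divisor and $h(t) = a_0 + a_1 t + a_2 t^2 + \cdots \in R\ps{t}$, then $h(t)$ is not a zero-divisor in $R\ps{t}$. Suppose $f(t) h(t) = 0$ with $f(t) = \sum_{i\geq 0} b_i t^i$ nonzero. Let $k$ be minimal with $b_k \neq 0$. The coefficient of $t^k$ in $f(t) h(t)$ is $b_k a_0$, which must vanish, forcing $b_k = 0$ since $a_0$ is not a zero-divisor --- a contradiction. Applying this with $a_0 = n$ yields that $\ab{n}_F(t)$ is not a zero-divisor whenever $n$ is not a zero-divisor in $R$.

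Finally, to conclude the same for $[n]_F(t) = t \cdot \ab{n}_F(t)$, it suffices to note that $t$ itself is not a zero-divisor in $R\ps{t}$ (if $t \cdot f(t) = 0$ then every coefficient of $f$ is zero), and the product of two non-zero-divisors is a non-zero-divisor. I do not anticipate a genuine obstacle here; the only mildly subtle point is making sure the leading-coefficient argument is phrased for power series rather than polynomials, which is handled by picking the minimal nonzero index rather than the maximal one.
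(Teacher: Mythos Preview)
Your proposal is correct and follows essentially the same approach as the paper: both arguments hinge on the fact that $[n]_F(t) \equiv nt \pmod{t^2}$ and then compare lowest-degree terms in a product $f(t)\cdot [n]_F(t) = 0$ to force $n$ to annihilate a nonzero element. The only cosmetic difference is that you isolate the ``non-zero-divisor constant term implies non-zero-divisor power series'' step as a separate lemma and deduce the $[n]_F$ case from the $\ab{n}_F$ case via $[n]_F = t\cdot\ab{n}_F$, whereas the paper argues directly for $[n]_F$ and remarks that the same works for $\ab{n}_F$.
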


\begin{proof}
    Suppose for the sake of contradiction that $[n]_F(t)$ is a zero-divisor for some $n$ (the same argument works for $\ab{n}_F$). Then there exists a power series $f(t) \in R\ps{t}$ such that \[f(t) \cdot [n]_F(t) = 0.\] It follows that the product of the coefficients of the lowest-degree terms of $f(t)$ and $[n]_F(t)$ must be $0$. Since the lowest-degree term of $[n]_F(t)$ is $nt$, this implies that $n$ times the lowest-degree coefficient of $f(t)$ is $0$. In particular, $n$ is a zero-divisor in $R$.
\end{proof}

\begin{definition}
\label{def: f-dR}
Let $F$ be a formal group law over $R$ such that $[n]_F(t)\in R\pw{t}$ is not a zero-divisor for any $n > 0$.\footnote{Many of the results below do not rely on this assumption; but we keep it nonetheless, since \cref{prop: [n]_F(t) GNS} allows to immediately transport many results about generalized $n$-series obtained above. The interested reader should have no trouble removing this condition as necessary in the results below.}
Let $\fdr{\AA^1}$ denote the differential graded $R\pw{t}$-algebra given by the $s$-de Rham complex associated to the GNS $s: \Z_{\geq 0} \to R$ sending $n\mapsto \ab{n}_F$. We will refer to $\fdr{\AA^1}$ as the \textit{$F$-de Rham complex} of the affine line $\AA^1 = \spec R[x]$; we will abusively also refer to the differential $\nabla_F$ as the \textit{$F$-derivative}.
\end{definition}
\begin{example}
\hspace{0in} 
\begin{itemize}
    \item For the additive formal group law, $\ab{n}_F = n$; so the resulting $F$-de Rham complex is simply the usual de Rham complex.
    \item For the multiplicative formal group law, $\ab{n}_F = [n]_q$; so the resulting $F$-de Rham complex is simply the $q$-de Rham complex. Note that the $F$-derivative can be defined directly on polynomials (instead of only on monomials) via $f(x) \mapsto \frac{f(qx) - f(x)}{(q-1) x}$. This can be seen directly from \cref{prop: rational-qiso}: indeed,
    $$\ce_F(\ell_F(t)z) = \frac{(1+t)^z - 1}{t} = \frac{q^z - 1}{q-1},$$
    and the operator $q^{x\partial_x}$ sends $f(x) \mapsto f(qx)$.
\end{itemize}
\end{example}
\begin{remark}\label{rmk: fdr not divided}
One can consider a slight variant of the $F$-de Rham complex, given by the complex
$$C^\bull := \left(R\pw{t}[x] \to R\pw{t}[x] dx\right), \ x^n \mapsto [n]_F x^{n-1} dx.$$
This is the $s$-de Rham complex for the function $s: \Z_{\geq 0} \to R$ sending $n\mapsto [n]_F$. Then, there is a quasi-isomorphism $\fdr{\AA^1} \simeq \eta_t C^\bull$.
\end{remark}

When $\QQ\subseteq R$, there is a general formula for the $F$-derivative.
\begin{prop}\label{prop: rational-qiso}
Suppose that $\QQ\subseteq R$, and let $F$ be a formal group law over $R$. Then there is an equality of $R\pw{t}$-linear operators on $R\pw{t}[x]$:
$$\nabla_F = \frac{1}{tx} \ce_F(\ell_F(t) x\partial_x).$$
In particular, there is a canonical isomorphism $\fdr{\AA^1} \cong \Omega_{\QQ[x]/\QQ} \otimes_\QQ R\pw{t}$.
\end{prop}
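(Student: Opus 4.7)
The plan is to verify the operator identity on monomials $x^n$ (which span $R\pw{t}[x]$ over $R\pw{t}$), and then to deduce the ``in particular'' from the invertibility afforded by the hypothesis $\QQ\subseteq R$.

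First, I would observe that the Euler operator $x\partial_x$ is $R\pw{t}$-linear and acts diagonally on $R\pw{t}[x]$ via $x\partial_x(x^n) = n\, x^n$. Consequently, for any power series $g(u)\in R\pw{t}\pw{u}$ with $g(0)\in tR\pw{t}$, the formal substitution $g(x\partial_x)$ defines an $R\pw{t}$-linear endomorphism of $R\pw{t}[x]$ sending $x^n\mapsto g(n)\,x^n$; the evaluation $g(n)$ converges in $R\pw{t}$ because substituting an integer into a power series whose coefficients lie in $tR\pw{t}$ preserves the $t$-adic filtration. In particular, since $\ce_F(u) = u + O(u^2)$ and $\ell_F(t)\in tR\pw{t}$, the operator $\ce_F(\ell_F(t)\,x\partial_x)$ is a well-defined operator on $R\pw{t}[x]$.

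Next, I would invoke the identity $[n]_F(t) = \ce_F(n\,\ell_F(t))$ from \cref{recall: fgl-recall} to conclude that
\[ \ce_F(\ell_F(t)\,x\partial_x)(x^n) \;=\; \ce_F(n\,\ell_F(t))\,x^n \;=\; [n]_F(t)\,x^n. \]
Since $t\mid [n]_F(t)$ (with $[0]_F(t)=0$), we may formally divide by $tx$ without leaving $R\pw{t}[x]$, interpreting the result as $0$ when $n=0$, to obtain $\ab{n}_F\,x^{n-1} = \nabla_F(x^n)$. By $R\pw{t}$-linearity, this establishes the claimed operator identity.

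For the ``in particular,'' I would produce an explicit $R\pw{t}$-linear isomorphism of two-term complexes. Under the hypothesis $\QQ\subseteq R$, the scalar $nt/[n]_F(t)\in R\pw{t}$ is a unit for every $n\geq 1$, since it reduces to $1$ modulo $t$. Let $U_0 = \id_{R\pw{t}[x]}$, and let $U_1\colon R\pw{t}[x]\,dx \to R\pw{t}[x]\,dx$ be the $R\pw{t}$-linear map sending $x^{n-1}\,dx$ to $\tfrac{nt}{[n]_F(t)}\,x^{n-1}\,dx$; this is a bijection by the unit property just noted. A direct monomial check gives $U_1(\nabla_F(x^n)) = n\,x^{n-1}\,dx = \partial_x(U_0(x^n))$, so $(U_0, U_1)$ defines the desired isomorphism $\fdr{\AA^1} \xrightarrow{\sim} \Omega_{\QQ[x]/\QQ}\otimes_\QQ R\pw{t}$. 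The only point requiring a moment's care is the well-definedness of $\ce_F(\ell_F(t)\,x\partial_x)$ as an operator, since this involves evaluating a power series at an unbounded operator; but this is handled by the $t$-adic filtration and presents no real obstacle.
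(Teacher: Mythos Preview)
Your proof of the operator identity is correct and is essentially the same as the paper's: both verify the identity on monomials $x^n$ by using that $x\partial_x$ acts diagonally as multiplication by $n$, so that $\ce_F(\ell_F(t)\,x\partial_x)(x^n) = \ce_F(n\,\ell_F(t))\,x^n = [n]_F(t)\,x^n$, and then divide by $tx$. The paper writes this out by expanding $\ce_F$ as a power series and computing $(x\partial_x)^k(x^m) = m^k x^m$, but the content is identical.

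For the ``in particular,'' you go further than the paper, which states the conclusion without proof. Your explicit chain isomorphism $(U_0,U_1)$ with $U_1(x^{n-1}\,dx) = \tfrac{nt}{[n]_F(t)}\,x^{n-1}\,dx$ is correct and makes the claim concrete; the key point, that $\tfrac{nt}{[n]_F(t)} \equiv 1 \pmod t$ is a unit in $R\pw{t}$ once $\QQ\subseteq R$, is exactly what is needed. Your added remarks on the $t$-adic well-definedness of $\ce_F(\ell_F(t)\,x\partial_x)$ are also a useful clarification that the paper leaves implicit.
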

\begin{proof}
Let $\nabla_F'$ denote the expression on the right-hand side. By definition of $\nabla_F$, it suffices to check that $\nabla_F'(x^m) = \pdb{m} x^{m-1}$ for every $m\geq 1$. Write $\ce_F(t) = \sum_n a_n t^n$; then
\begin{align*}
    \nabla'_F(x^m) & = \frac{1}{xt} \sum_n a_n \ell_F(t)^n (x\partial_x)^n(x^m) \\
    & = \frac{1}{xt} \sum_n a_n (m\ell_F(t))^n x^m \\
    & = \frac{1}{t} \ce_F(m\ell_F(t)) x^{m-1} = \pdb{m} x^{m-1},
\end{align*}
as desired.
\end{proof}
\begin{remark}\label{rmk: isomorphic fgls have the same dr complex}
Since every formal group law over a $\QQ$-algebra is isomorphic to the additive formal group law, the final statement of \cref{prop: rational-qiso} is a special case of the following more general observation: if $F_1$ and $F_2$ are isomorphic formal group laws, then the associated de Rham complexes are also isomorphic.
\end{remark}
\begin{example}
Using \cref{prop: rational-qiso}, we can make the $F$-derivative explicit for the hyperbolic formal group law. In this case, $\ce_F(t) = \tanh(t)$, so that $\ell_F(t) = \tanh^{-1}(t)$, and
$$\ce_F(\ell_F(t) z) = \tanh(z\tanh^{-1}(t)) = \frac{q^z - (2-q)^z}{q^z+(2-q)^z},$$
where $q-1 = t$.
Since the operator $q^{x\partial_x}$ sends $f(x) \mapsto f(qx)$, the $F$-derivative can be expressed as
$$\nabla_F: f(x) \mapsto \frac{1}{(q-1)x} \frac{f(qx) - f((2-q)x)}{f(qx) + f((2-q)x)}.$$
\end{example}

\begin{example}\label{ex: morava k-theory}
Let $R_0 = \FF_p[v_n]$. Then, there is a unique formal group law (known as the \textit{Honda formal group law}; see \cite{original-honda-fgl}) over $R_0$ which is characterized by the property that its $p$-series is given by $[p]_F(t) = v_n t^{p^n}$. This implies that up to a unit in $R_0\pw{t}$, we have $[m]_F(t) = v_n^{v_p(m)} t^{m^n}$, where $v_p(m)$ denotes the $p$-adic valuation of $m$.
The formal group law over $R_0$ lifts to a formal group law over $R = \Z_p[v_n]$ such that over $R\otimes \QQ \cong \QQ_p[v_n]$, its logarithm is given by
$$\ell_F(x) = \sum_{j\geq 0} v_n^{\frac{p^{jn}-1}{p^n-1}} \frac{x^{p^{jn}}}{p^j}.$$
For example, if $n=1$ and we adjoin a $(p-1)$st root $\beta$ of $v_1$, this is essentially the logarithm of the Artin-Hasse exponential, so that
$$\ell_F(x) = -\frac{1}{\beta} \sum_{p\nmid d} \frac{\mu(d)}{d} \log(1 - (\beta x)^d).$$
One can say something similar for general $n$.
Recall that the polylogarithm is defined by $\plog_s(x) = \sum_{j\geq 1} \frac{x^j}{j^s}$, so that $\plog_1(x) = -\log(1-x)$. If $\beta$ denotes a $(p^n-1)$st root of $v_n$, then $\ell_F(x)$ can be understood as a ``$p^n$-typical'' version of $\frac{1}{\beta} \plog_{1/n}(\beta x)$.

When $n=1$, the resulting $F$-de Rham complex over $\FF_p\pw{t}$ is closely related to the mod $p$ reduction of the $q$-de Rham complex: indeed, observe that the $p$-series of the multiplicative formal group law is congruent to $t^p\pmod{p}$. The resulting lifted formal group law over $\Z_p[v_1]$ is the $p$-typification of the multiplicative formal group law (see \cite[Appendix 2]{green}). For higher $n$, the resulting $F$-de Rham complex behaves qualitatively similar to the case $n=1$. For instance, we have
$$\H^0(\fdr{\AA^1}) \cong \FF_{p^n}[v_n]\pw{t}, \ \H^1(\fdr{\AA^1}) \cong \bigoplus_{j\geq 1} \FF_{p^n}[v_n, t]/(v_n^{v_p(j)} t^{j^n}) \{x^{j-1} dx\}.$$
\end{example}
\begin{example}\label{ex: lubin-tate}
Let $n\geq 1$, let $k$ be an algebraically closed field of characteristic $p>0$, and let $R = W(k)\pw{u_1, \cdots, u_{n-1}}$ denote the Lubin-Tate ring. Let $F$ denote the formal group law associated to the universal deformation of a chosen formal group law of height $n$ over $k$. Then, the resulting $F$-de Rham complex specializes to the $q$-de Rham complex when $n=1$. For general $n$, this $F$-de Rham complex is closely related to deep phenomena in chromatic homotopy theory (see \cref{rmk: homotopy theory}).
\end{example}

\begin{lemma}[$F$-Taylor expansion]\label{lem: F-taylor}
Let $F$ be a formal group law over $R$ such that $[n]_F(t)\in R\pw{t}$ is not a zero-divisor for any $n > 0$. If $f(x)\in (R\otimes \QQ)\pw{t, x-1}$, there is a Taylor expansion
$$f(x) = \sum_{n\geq 0} \nabla_F^n(f(x))|_{x=1} \frac{(x-1)^n_s}{n!_F}.$$
Here, $(x-1)^n_s$ denotes the symbol from \cref{def: (x + y)^n_s} with $y = -1$.
\end{lemma}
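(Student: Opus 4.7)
The strategy is to verify that both sides of the identity produce the same iterated $F$-derivative values at $x=1$, and then to deduce equality from a $t$-adic uniqueness argument. Denote the right-hand side by $\Psi(f)$.

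The first step is the interpolation identity $\nabla_F^k((x-1)^n_s/n!_F)|_{x=1} = \delta_{n,k}$. Specializing condition~(3) of \cref{def: (x + y)^n_s} to $y = -1$ and iterating gives $\nabla_F^k((x-1)^n_s) = \tfrac{n!_F}{(n-k)!_F}(x-1)^{n-k}_s$ for $k \leq n$ and $0$ for $k > n$, while condition~(2) specialized to $(x,y) = (1,-1)$ gives $(x-1)^m_s|_{x=1} = 0$ for $m \geq 1$. Combining these yields the interpolation identity, so $g := f - \Psi(f)$ satisfies $\nabla_F^k(g)|_{x=1} = 0$ for every $k \geq 0$.

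To ensure $\Psi(f)$ actually lies in $(R\otimes\QQ)\pw{t,x-1}$, I would prove by induction on $n$ that $(x-1)^n_s \in (t,x-1)^n$ inside this ring. The mod-$t$ reduction of the GNS $n \mapsto \ab{n}_F$ is simply $n\mapsto n$, so $(x-1)^n_s \equiv (x-1)^n \pmod{t}$ accounts for the leading $(x-1)^n$ contribution; the correction $(x-1)^n_s - (x-1)^n \in tR\pw{t}[x]$ is controlled via the recursion $\nabla_F((x-1)^n_s) = \ab{n}_F(x-1)^{n-1}_s$, the inductive hypothesis on $(x-1)^{n-1}_s$, and the fact that $\ab{k}_F - k \in tR\pw{t}$. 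Since $n!_F$ is a unit times $n!$ in $(R\otimes\QQ)\pw{t}$, each summand of $\Psi(f)$ has $(t,x-1)$-adic valuation at least $n$, so the series converges.

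Finally, to conclude $g = 0$, I would reduce modulo $t$: because $\ab{m}_F \equiv m \pmod{t}$, the operator $\nabla_F$ reduces to $\partial_x$, so $\partial_x^k(g \bmod t)|_{x=1} = 0$ for all $k$, and the classical Taylor expansion over the $\QQ$-algebra $R\otimes\QQ$ forces $g \equiv 0 \pmod{t}$. Writing $g = tg_1$ and using the $R\pw{t}$-linearity of $\nabla_F$ together with $t$-torsion-freeness yields $\nabla_F^k(g_1)|_{x=1} = 0$ for all $k$, and the same argument iterates to place $g$ in $\bigcap_n t^n(R\otimes\QQ)\pw{t,x-1} = 0$ by $t$-adic separation. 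The main obstacle is the inductive valuation estimate $(x-1)^n_s \in (t,x-1)^n$: while the mod-$t$ picture is immediate, controlling the lower-order $(x-1)$-coefficients requires careful tracking of how the pattern $\ab{k}_F - k \in tR\pw{t}$ propagates through the defining differential equation, and this is the combinatorial heart of the proof.
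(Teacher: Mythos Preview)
Your proposal is correct and follows essentially the same route as the paper. Both arguments reduce to showing that $g := f - \Psi(f)$ has all iterated $F$-derivatives vanishing at $x=1$, and then kill $g$ by the $t$-adic iteration: $\nabla_F \equiv \partial_x \pmod t$, so the ordinary Taylor theorem over the $\QQ$-algebra $R\otimes\QQ$ forces $t\mid g$, and one repeats. The paper's proof is terser (it simply cites the analogous argument in Ansch\"utz--Le Bras) and does not spell out either the interpolation identity $\nabla_F^k\bigl((x-1)^n_s/n!_F\bigr)\big|_{x=1}=\delta_{n,k}$ or the convergence of $\Psi(f)$ in the $(t,x-1)$-adic topology; you make both of these explicit and correctly flag the valuation estimate $(x-1)^n_s\in(t,x-1)^n$ as the one point requiring real work, which the paper leaves implicit.
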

\begin{proof}
This is the same argument as in \cite[Proposition 4.4]{anschutz-le-bras}. First, observe that if $g(x) \in (R\otimes \QQ)\pw{t, x-1}$ is a function such that $\nabla_F^n(g(x))|_{x=1} = 0$ for all $n\geq 0$, then $g = 0$. Indeed, since $\nabla_F$ is simply the usual derivative modulo $t$, we see that $g(x)$ is divisible by $t$. Write $g(x) = t g_1(x)$; then, $\nabla_F^n(g_1(x))|_{x=1} = 0$ for all $n\geq 0$, so $t \mid g_1(x)$. Continuing, we see that $g(x)$ is infinitely $t$-divisible, and hence is zero (since $t$ is topologically nilpotent).

We can now apply the above observation to
$$g(x) := f(x) - \sum_{n\geq 0} \nabla_F^n(f(x))|_{x=1} \frac{(x-1)^n_s}{n!_F}.$$
By definition of $(x-1)^n_s$, we know that $\nabla_F(\frac{(x-1)^n_s}{n!_F}) = \frac{(x-1)^{n-1}_s}{(n-1)!_s}$; so $\nabla_F^n(g(x))|_{x=1} = 0$ for all $n\geq 0$, and hence $g = 0$, as desired.
\end{proof}
\begin{corollary}[$F$-logarithm]\label{cor: F-log}
Let $F$ be a formal group law over $R$ such that $[n]_F(t)\in R\pw{t}$ is not a zero-divisor for any $n > 0$. Consider the function $F\log(x) \in (R\otimes \QQ)\pw{t, x-1}$ given by $\frac{t}{\ell_F(t)} \log(x)$. Then, we have:
\begin{enumerate}
    \item $\nabla_F(F\log(x)) = \frac{1}{x}$.
    \item $F\log(xy) = F\log(x) + F\log(y)$.
    \item There is a series expansion
    $$F\log(x) = \sum_{n\geq 1} \frac{\pdb{-n+1}_F \cdots \pdb{-1}_F}{n!_F} (x-1)^n_s.$$
\end{enumerate}
\end{corollary}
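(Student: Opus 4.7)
All three claims are direct applications of the explicit formula for $\nabla_F$ supplied by \cref{prop: rational-qiso} together with the $F$-Taylor expansion of \cref{lem: F-taylor}; there are no substantive obstacles. The plan is as follows.

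For (1), I would use \cref{prop: rational-qiso} to write $\nabla_F = \frac{1}{tx}\ce_F(\ell_F(t) x\partial_x)$. Since $\ce_F(u) = u + O(u^2)$ (as $\ce_F$ is the compositional inverse of $\ell_F(u) = u + O(u^2)$), and since $x\partial_x(\log x) = 1$ with $(x\partial_x)^k(\log x) = 0$ for $k\geq 2$, only the linear term of $\ce_F$ contributes, yielding $\ce_F(\ell_F(t) x\partial_x)(\log x) = \ell_F(t)$. Hence $\nabla_F(\log x) = \frac{\ell_F(t)}{tx}$, and multiplying by the scalar $\frac{t}{\ell_F(t)}$ gives $\nabla_F(F\log(x)) = \frac{1}{x}$. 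Claim (2) is immediate from the defining formula $F\log(x) = \frac{t}{\ell_F(t)}\log(x)$ combined with $\log(xy) = \log(x)+\log(y)$ in $(R\otimes\QQ)\pw{x-1,y-1}$.

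For (3), I would apply the $F$-Taylor expansion of \cref{lem: F-taylor} to $F\log$ at $x = 1$. The $n=0$ term vanishes since $F\log(1) = 0$. For $n \geq 1$, part (1) gives $\nabla_F^n(F\log(x)) = \nabla_F^{n-1}(x^{-1})$, and the identity $\nabla_F(x^m) = \pdb{m}_F x^{m-1}$, which holds for every integer $m \in \Z$ (directly from the explicit formula in \cref{prop: rational-qiso} applied to the eigenvector $x^m$ of $x\partial_x$), yields by induction
$$\nabla_F^{n-1}(x^{-1}) = \pdb{-1}_F \pdb{-2}_F \cdots \pdb{-(n-1)}_F\, x^{-n}.$$
Evaluating at $x = 1$ and substituting into the Taylor expansion produces exactly the series in the statement, after rewriting $\pdb{-1}_F \cdots \pdb{-(n-1)}_F = \pdb{-n+1}_F \cdots \pdb{-1}_F$.

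The only (very minor) point to check throughout is that $\nabla_F$ makes sense on elements of $(R\otimes \QQ)\pw{t, x-1}$ rather than merely on polynomials; this follows because the explicit operator $\frac{1}{tx}\ce_F(\ell_F(t) x\partial_x)$ converges $t$-adically on this ring, since $\ell_F(t)^k$ lies in $t^k (R\otimes\QQ)\pw{t}$. This also validates the use of \cref{lem: F-taylor}, whose hypotheses apply verbatim to $F\log(x)$.
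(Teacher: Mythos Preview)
Your proof is correct and follows essentially the same approach as the paper: both use \cref{prop: rational-qiso} together with $(x\partial_x)^k(\log x) = 0$ for $k\geq 2$ to prove (1), note that (2) is immediate from the ordinary logarithm identity, and deduce (3) by iterating $\nabla_F$ on $x^{-1}$ and plugging into \cref{lem: F-taylor}. Your extra remark on $t$-adic convergence of the operator is a welcome clarification but not a departure from the paper's argument.
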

\begin{proof}
The first statement follows from \cref{prop: rational-qiso}. Indeed, write $\ce_F(y) = \sum_{n\geq 1} a_n y^n$; the condition that $F(x,y) \equiv x + y\pmod{(x,y)^2}$ forces $a_1 = 1$. Since 
$$(x\partial_x)(F\log(x)) = \frac{t}{\ell_F(t)} (x\partial_x) \log(x) = \frac{t}{\ell_F(t)},$$
we see that
\begin{align*}
    x\nabla_F(F\log(x)) & = \frac{1}{t} \sum_{n\geq 1} a_n \ell_F(t)^n (x\partial_x)^n(F\log(x)) \\
    & = \frac{1}{t} \left(\ell_F(t) \cdot \frac{t}{\ell_F(t)} + \sum_{n\geq 2} a_n \ell_F(t)^n (x\partial_x)^n(F\log(x))\right).
\end{align*}
The second sum vanishes, since $(x\partial_x)^n(F\log(x)) = 0$ for $n\geq 2$. The first term cancels out to give $x\nabla_F(F\log(x)) = 1$, as desired.

The second statement is clear.
For the third statement, observe that
$$\nabla_F^n(F\log(x)) = \nabla_F^{n-1}(1/x) = \pdb{-n+1}_F \cdots \pdb{-1}_F x^{-n}.$$
Evaluating at $x=1$ and using \cref{lem: F-taylor} gives the desired claim.
\end{proof}
\begin{warning}
The $F$-logarithm $F\log(x)$ is \textit{not} the same as the logarithm $\ell_F(x)$ associated to the formal group law. This unfortunate terminology stems from attempting to simultaneously emulate the standard terminology ``$q$-logarithm'' and the ``logarithm of the multiplicative formal group law''.
\end{warning}
\begin{remark}\label{rmk: Flog lives in divided power}
\cref{cor: F-log} implies that $F\log(x)$ is a well-defined class in the ring $R\pw{t}\left[x^{\pm 1}, \frac{(x-1)^n_s}{n!_F}\right]$; this is the ring of functions on an $F$-analogue of the divided power completion of the identity section of $(\GG_m)_{R\pw{t}}$.
\end{remark}
\begin{remark}
The formal series in \cref{cor: F-log}(3) can be written for arbitry GNS $s$; when it exists and converges, its $s$-derivative will formally be $1/x$. However, we have chosen to state \cref{cor: F-log} only in the case of GNS arising via formal group laws, since it is otherwise difficult to get a computational grip on the resulting formal series.
\end{remark}
\begin{example}
When $F$ is the multiplicative formal group law over $\Z$, the function $F\log(x)$ can be identified with the $q$-logarithm
$$\log_q(x) = \sum_{n\geq 1} (-1)^{n+1} q^{-\binom{n}{2}} \frac{(x-1)(x-q) \cdots (x-q^{n-1})}{[n]_q} \in \QQ\pw{q-1, x-1}.$$
Indeed, this follows from \cref{cor: F-log}(3) and the observation that $\pdb{-j}_F = [-j]_q = -q^{-j} [j]_q$.
See \cite[Section 4]{anschutz-le-bras} for more on the $q$-logarithm.
\end{example}

Let us summarize some of the results from the previous section upon specialization to the $F$-de Rham complex:
\begin{theorem}\label{thm: omnibus-fgl}
Let $R$ be a commutative ring, and let $F$ be a formal group law over $R$ such that $[n]_F(t)\in R\pw{t}$ is not a zero-divisor for any $n > 0$. Let $\hat{\GG} = \spf R\pw{t}$ denote the formal group over $R$. Then:
\begin{enumerate}
    \item Let $R\pw{t}\pdb{x}_F$ denote the ring $R\pw{t}[x, \frac{x^n}{[n]_F!}]_{n\geq 0}$. Then the Poincar\'e lemma holds: the cohomology of the complex $\fdr{\AA^1} \otimes_{R\pw{t}[x]} R\pw{t}\pdb{x}_F$ is concentrated in degree zero, where it is isomorphic to $R\pw{t}$.
    \item The Cartier isomorphism holds: there is a canonical isomorphism
    $$\fdr{\AA^{1,(p)}}^i \otimes_{R\pw{t}} R\pw{t}/\pdb{p}_F \cong \H^i(\fdr{\AA^1} \otimes_{R\pw{t}} R\pw{t}/\pdb{p}_F)$$
    sending $(x^p)^n\mapsto x^{np}$ in degree zero and $(x^p)^n d(x^p) \mapsto [x^{np} x^{p-1} dx]$. Note that $\spf R\pw{t}/[p]_F \cong \hat{\GG}[p]$.
    \item The d\'ecalage isomorphism holds: replace $R\pw{t}$ with its $\pdb{p}_F$-adic completion. Let $\varphi: R\pw{t} \to R\pw{t}$ denote the $R$-algebra map sending $t\mapsto [p]_F(t)$, i.e., the map induced on rings by the multiplication-by-$p$ map $\hat{\GG} \to \hat{\GG}$. Then, there is a quasi-isomorphism $\varphi^\ast \fdr{\AA^{1,(p)}} \xar{\sim} \eta_{\pdb{p}_F} \fdr{\AA^1}$.
    \item The hypotheses of \cref{cor: s-divided powers criterion} are satisfied, so that there is a criterion for admitting ``$F$-divided powers''. Namely, replace $R\pw{t}$ by its $(p, \pdb{p}_F)$-adic completion, and suppose that for any $n\geq 0$, $\varphi(n!_F)$ is a non-zero-divisor in $R\pw{t}/\pdb{p}_F$. Let $A$ be a $\pdb{p}_F$-completely flat $R\pw{t}$-algebra equipped with a $R\pw{t}$-linear multiplicative map $\phi: \varphi^\ast A \to A$. If $x\in A$ is an element such that $\varphi(x) = x^p$ and $\pdb{p}_F \mid \varphi(x)$, then $\frac{x^n}{n!_s}\in A$.
\end{enumerate}
\end{theorem}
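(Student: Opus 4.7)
The plan is to deduce each of the four statements from the corresponding GNS-level results in \cref{sec: s-Cartier}, applied to the GNS $s(n) = \pdb{n}_F$ on $R\pw{t}$ furnished by \cref{prop: [n]_F(t) GNS}. The principal task is then to verify the hypotheses of those general statements in this formal-group-theoretic setting and to match up notation.

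Part (1) is an immediate instance of the $s$-Poincar\'e lemma (\cref{prop: s-poincare}): the ring $R\pw{t}\pdb{x}_F$ coincides with the ring $R\pw{t}\pdb{x}_s$ appearing there, and $\fdr{\AA^1}$ is $\sdr{\AA^1}$ in this notation. For part (2), I apply \cref{prop: s-cartier}; the condition that $s(1) = \pdb{1}_F = [1]_F(t)/t = 1$ be a unit in $R\pw{t}/\pdb{p}_F$ is trivial. The map of that proposition, sending $x^n \mapsto x^{np}$ in degree zero and $x^n \, dx \mapsto [x^{np} x^{p-1}\,dx]$ in degree one, yields the stated formulation after identifying $R\pw{t}[x^p]$ with $\fdr{\AA^{1,(p)}}^0$ and the $R\pw{t}[x^p]$-line in $\H^1$ generated by $x^{p-1}\,dx$ with $\fdr{\AA^{1,(p)}}^1/\pdb{p}_F$ via $d(x^p) \leftrightarrow x^{p-1}\,dx$.

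The central step is part (3). Let $\varphi : R\pw{t} \to R\pw{t}$ be the $R$-algebra endomorphism determined by $t \mapsto [p]_F(t)$; this is well-defined since $[p]_F(t)$ lies in the augmentation ideal, and corresponds on formal spectra to the multiplication-by-$p$ map on $\hat{\GG}$. The key compatibility to check is
\[
\varphi(\pdb{n}_F) \;=\; \pdb{n}_F([p]_F(t)) \;=\; \frac{[n]_F([p]_F(t))}{[p]_F(t)} \;=\; \frac{[np]_F(t)}{[p]_F(t)} \;=\; \frac{\pdb{np}_F}{\pdb{p}_F},
\]
where the crucial middle equality $[n]_F([p]_F(t)) = [np]_F(t)$ expresses the fact that the multiplication-by-$n$ and multiplication-by-$p$ endomorphisms of $\hat{\GG}$ commute and compose to multiplication-by-$np$ (i.e., the commutativity of $\End(\hat{\GG})$ on integer-multiplication maps). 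After replacing $R\pw{t}$ by its $\pdb{p}_F$-adic completion (so that, together with $\pdb{1}_F = 1$, condition (2) of \cref{prop: decalage} is met), applying \cref{prop: decalage} yields the quasi-isomorphism $\varphi^\ast\fdr{\AA^{1,(p)}} \xar{\sim} \eta_{\pdb{p}_F}\fdr{\AA^1}$.

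Part (4) is a direct transcription of \cref{cor: s-divided powers criterion} with the same $\varphi$, after passing to the $(p,\pdb{p}_F)$-adic completion; the non-zero-divisor hypothesis on $\varphi(n!_F)$ is imported verbatim from the corollary. The main obstacle across all four parts is really confined to the verification of $\varphi(\pdb{n}_F) = \pdb{np}_F/\pdb{p}_F$ in part (3); once that compatibility is in hand, each item reduces mechanically to its GNS-level counterpart.
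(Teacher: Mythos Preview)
Your proposal is correct and follows essentially the same approach as the paper: each part is reduced to the corresponding GNS-level result (\cref{prop: s-poincare}, \cref{prop: s-cartier}, \cref{prop: decalage}, \cref{cor: s-divided powers criterion}), with the only nontrivial verification being the identity $\varphi(\pdb{n}_F) = \pdb{np}_F/\pdb{p}_F$ via $[n]_F([p]_F(t)) = [np]_F(t)$, which you carry out exactly as the paper does.
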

\begin{proof}
The first part is \cref{prop: s-poincare}. The second part is \cref{prop: s-cartier}, where the hypothesis in the proposition holds because $\pdb{1}_F = 1$. The third (and fourth) part is an application of \cref{prop: decalage}. Note that the first hypothesis holds by construction of $\varphi: R\pw{t} \to R\pw{t}$: indeed, $\varphi$ sends
$$s(n) = \frac{[n]_F(t)}{t} \mapsto \frac{[n]_F([p]_F(t))}{[p]_F(t)} = \frac{[np]_F(t)}{[p]_F(t)} = \frac{s(np)}{s(p)}.$$
The second hypothesis follows from the assumption on $R\pw{t}$.
\end{proof}
\begin{remark}
Using \cref{rmk: isomorphic fgls have the same dr complex}, one can upgrade \cref{thm: omnibus-fgl} to the case when $R$ (rather, $\spec R$) is replaced by the moduli stack of formal groups. However, we will not discuss this further in this article.
\end{remark}
Motivated by \cite[Conjecture 3.1]{scholze-q-def}, we propose the following conjecture. In fact, considerations with ring stacks following \cite{drinfeld-prism} strongly indicate that the conjecture is \textit{false}, but we have stated it nonetheless in the hopes that it might spur investigation into these sort of questions. Arpon Raksit has informed the first author that he is currently working on some variant of this conjecture.
\begin{conjecture}\label{conj: polynomial maps}
Let $\Poly_R$ denote the category of polynomial $R$-algebras and $R$-algebra maps between them. Let $F$ be a formal group law over $R$ such that $[n]_F(t)\in R\pw{t}$ is not a zero-divisor for any $n > 0$. Then, there is a functor $\Gammaf{-}: \Poly_R \to \CAlg(R\pw{t})$ landing in the $\infty$-category of $\Eoo$-$R\pw{t}$-algebras which sends $R[x_1, \cdots, x_n] \mapsto \fdr{\AA^n}$.\footnote{In other words, the assignment $R[x_1, \cdots, x_n] \mapsto \fdr{\AA^n}$ is functorial in $R$-algebra maps of polynomial $R$-algebras.} Furthermore, each part of \cref{thm: omnibus-fgl} admits a generalization to $\Gammaf{-}$.
\end{conjecture}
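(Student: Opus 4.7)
The plan is to construct $\Gammaf{-}$ by prescribing its action on objects directly, then extending to morphisms via descent from the rational case, with the composition structure verified through an $F$-analogue of the chain rule. On objects, I would set $\Gammaf{R[x_1, \ldots, x_n]} := (\fdr{\AA^1})^{\otimes_{R\pw{t}} n}$, using \cref{cor: dga} and the monoidal structure on chain complexes to make this a DGA over $R\pw{t}$; by standard rectification of DGAs, it represents an object of $\CAlg(R\pw{t})$. This construction is manifestly independent of the presentation up to canonical isomorphism when we restrict to coordinate permutations, giving the functor on the subcategory of such maps.

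For general morphisms, the strategy is to lift from the rational setting. By \cref{prop: rational-qiso}, after tensoring with $\QQ$ the $F$-de Rham complex becomes isomorphic to the classical algebraic de Rham complex tensored with $R\pw{t}$, and pullback of differential forms gives a canonical rational functor $\Gammaf{-}_\QQ$. For a map $\phi: R[x] \to R[y]$ sending $x \mapsto f(y)$, the rational lift acts by $x \mapsto f(y)$ and $dx \mapsto \nabla_F(f(y))\, dy$, and both images visibly land in the integral subcomplex $\fdr{\AA^1_y} \subset \fdr{\AA^1_y} \otimes_{R\pw{t}} (R \otimes \QQ)\pw{t}$ since $\nabla_F$ preserves $R\pw{t}[y]$. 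So integrality of the assignment on generators is automatic; the work is verifying that it extends multiplicatively compatibly with the twisted Leibniz rule of \cref{thm: s-product rule}.

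The crux is proving an $F$-chain rule of the form $\nabla_F(f \circ g) = (\nabla_F f)(g(y)) \star_F \nabla_F(g)$ up to coherent homotopy, so that $\Gammaf{\psi \circ \phi} \simeq \Gammaf{\psi} \circ \Gammaf{\phi}$ in $\CAlg(R\pw{t})$. I would first establish this identity for monomial $f = x^m$, where the left side is $\nabla_F(g^m)$ and can be expanded via the $s$-binomial theorem (\cref{thm: s-binomial theorem}) combined with iterated application of the $s$-product rule. I would then extend by $R\pw{t}$-linearity to polynomial $f$. To handle composition on a generic polynomial $g$, the idea is to base change to the $F$-divided power envelope $R\pw{t}\pdb{y}_F$ and expand $f(g(y))$ using the $F$-Taylor expansion (\cref{lem: F-taylor}) around a basepoint; because $g$ is polynomial, the resulting series truncates, and the identity can then be descended back to $R\pw{t}[y]$.

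The main obstacle is precisely this chain rule. Classical and $q$-derivatives enjoy clean chain rules only because the additive and multiplicative formal group laws have closed-form logarithms on the nose; for a general $F$ the $\star_F$-products introduced by iterated composition do not collapse into a single product, and one expects a genuine higher-coherence witness is needed. This is the obstruction the ring-stack perspective flags, and overcoming it will likely require constructing an explicit chain of homotopies in $\CAlg(R\pw{t})$ realizing associativity of composition—not an equality of maps. Assuming this is achieved, the generalizations of \cref{thm: omnibus-fgl} should follow almost formally: the Poincar\'e lemma extends by taking the relative $F$-divided power envelope along each projection $\AA^n \to \AA^{n-1}$; the Cartier isomorphism extends via the functorial Frobenius $x_i \mapsto x_i^p$; the d\'ecalage statement is obtained by checking that $L\eta_{\pdb{p}_F}$ commutes with the tensor product structure defining $\Gammaf{-}$ on $\AA^n$; and the $F$-divided power criterion transports by pulling back along $\Gammaf{-}$.
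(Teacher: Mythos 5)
The statement you are trying to prove is a \emph{conjecture}, not a theorem: the paper does not supply a proof, and in fact the surrounding text states explicitly that ``considerations with ring stacks following [Drinfeld] strongly indicate that the conjecture is \textit{false}.'' So there is no paper argument to compare your proposal against, and the task is not to fill a gap in a known proof but rather to confront the fact that the authors themselves believe the statement is wrong.

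Within that frame, your analysis is actually quite good at locating the genuine difficulty. You correctly identify that functoriality comes down to producing an $F$-analogue of the chain rule, and you correctly flag that an honest strict chain rule only holds for the additive and multiplicative formal group laws. But you then write ``Assuming this is achieved, the generalizations of \cref{thm: omnibus-fgl} should follow almost formally'' — and this assumption is precisely what the authors believe cannot be achieved, even up to coherent homotopy. The ring-stack perspective you gesture at is not an obstruction that higher coherences can route around; it is the evidence that the requisite $\Eoo$-multiplication on $\fdr{\AA^n}$ functorially compatible with general polynomial maps does not exist for generic $F$.

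Two more local issues are worth naming. First, ``integrality of the assignment on generators is automatic since $\nabla_F$ preserves $R\pw{t}[y]$'' conflates two problems: integrality of the value of $\nabla_F(f(y))$ is trivial, but what fails is that the assignment $x \mapsto f(y)$, $dx \mapsto \nabla_F(f(y))\,dy$ is generally \emph{not} a chain map from $\fdr{\AA^1_x}$ to $\fdr{\AA^1_y}$: one needs $\nabla_F(g(f(y)))$ to factor as $(\nabla_F g)(f(y))$ twisted against $\nabla_F(f(y))$, and the $\star_F$-twists of \cref{thm: s-product rule} prevent this from holding. Second, expanding $\nabla_F(g^m)$ using the $s$-binomial theorem plus iterated $s$-product rule does give a formula, but that formula is a sum of $\star_F$-twisted terms that do \emph{not} telescope into $(\nabla_F f)(g) \star_F \nabla_F(g)$; this is not a bookkeeping problem but a structural one, since $\star_F$ is not associative or commutative in the naive sense and the twists compound under iteration. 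In short, your proposal correctly diagnoses the obstruction, but then treats it as surmountable when the authors' stated expectation — supported by the ring-stack picture — is that it is fatal.
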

\begin{remark}
If the preceding conjecture is true, then the construction $\fdr{-}$ can be extended to all (animated) $R$-schemes by left Kan extension:
$$\xymatrix{
\Poly_R^\op \ar[rr]^-{\Gammaf{-}} \ar[d] & & \CAlg(R\pw{t})^\op \\
\Sch_R^\op \ar@{-->}[urr]_-{\Gammaf{-}} & &
}$$
We expect that the resulting functor $X\mapsto \Gammaf{X}$, if it exists, should be rather interesting. When $F$ is the multiplicative formal group law, \cref{conj: polynomial maps} is true, and the resulting assignment $X\mapsto \Gammaf{X}$ is the $q$-de Rham cohomology of \cite{bhatt-scholze}.
\end{remark}

Let us end this section by discussing the motivation behind the construction of the $F$-de Rham complex. We will necessarily be brief, since this is not the main subject of the present article.
\begin{remark}\label{rmk: homotopy theory}
Let $A$ be an even-periodic $\Eoo$-ring equipped with a complex orientation. Then, Quillen defined a canonical formal group law $F(x,y)$ over $R := \pi_0(A)$, whose underlying formal group is $\spf \pi_0(A^{hS^1})$. Let $\tau_{\geq 0} A$ denote the connective cover of $A$, and let $\F^\star_\ev \HP(\tau_{\geq 0} A[x]/\tau_{\geq 0} A)$ denote the even filtration on the periodic cyclic homology of $\tau_{\geq 0} A[x]$ (defined in \cite{even-filtr}). The assignment $A \mapsto (\tau_{\geq 0} A)^{tS^1}$ is the homotopical analogue of the construction of the ``rescaled'' formal group law from \cref{rmk: rescaled-fgl}; in other words, $\spf \pi_0((\tau_{\geq 0} A)^{tS^1})$ is the rescaled analogue of the Quillen formal group $\spf \pi_0(A^{hS^1})$.

Unpublished work of Arpon Raksit shows that the $F$-de Rham complex $\fdr{\AA^1}$ arises as the zeroth associated graded piece $\gr^0_\ev \HP(\tau_{\geq 0} A[x]/\tau_{\geq 0} A)$.\footnote{Using similar methods, one can also show that the variant of the $F$-de Rham complex from \cref{rmk: fdr not divided} arises as the zeroth associated graded piece $\gr^0_\ev \HC^-(A[x]/A)$ in the negative cyclic homology of $A[x]$.} In particular, in this case, the $F$-de Rham complex admits the structure of an $\Eoo$-$R\pw{t}$-algebra. There are homotopical analogues of each part of \cref{thm: omnibus-fgl}: for example, the d\'ecalage isomorphism of \cref{thm: omnibus-fgl}(3) is proved as \cite[Proposition 3.5.3]{thh-xn}.

When $A = \KU$ is periodic complex K-theory (so $\tau_{\geq 0} A = \ku$ is connective complex K-theory), the formal group law over $\pi_0(A)$ is precisely the multiplicative one; so the $q$-de Rham complex arises as $\gr^0_\ev \HP(\ku[x]/\ku)$. As explained in \cite{tp-Z}, the $p$-completion of $\HP(\ku[x]/\ku)$ can be understood via the topological negative cyclic homology of $\Z_p[\zeta_p][x]$; this is a homotopical analogue of the Bhatt-Scholze construction \cite{bhatt-scholze} of $q$-de Rham cohomology via prismatic cohomology.

When $A = E_n$ is the Morava E-theory associated to the Lubin-Tate formal group (see \cref{ex: lubin-tate}) and $\tau_{\geq 0} A = e_n$ is its connective cover, the $F$-de Rham complex of \cref{ex: lubin-tate} arises as $\gr^0_\ev \HP(e_n[x]/e_n)$. The periodic cyclic homology $\HP(e_n[x]/e_n)$ plays an important role in higher chromatic analogues of the work of Bhatt-Morrow-Scholze \cite{bms-ii}, and will be explored in future work.

Although this does not quite fall into the above framework, the first author hopes to show in future work that when $A = L^s(\Z)$ is the symmetric L-theory of the integers (see \cite{l-theory-of-Z}), the $F$-de Rham complex associated to the hyperbolic formal group law is closely related to $\gr^0_\ev \HP(L^s(\Z)[x]/L^s(\Z))$. This is a manifestation of the observation (going back to the Hirzebruch signature theorem) that the logarithm of the formal group law associated to the complex orientation on $L^s(\Z)$ is given by the hyperbolic tangent function $\tanh(x)$.
\end{remark}
\subsection{A variant of the Weyl algebra}
\label{sec: weyl-algebra}

It is well-known that the classical de Rham complex over a base commutative ring $R$ is Koszul dual to the usual Weyl algebra of differential operators on the affine line:
$$\cd_{\AA^1} = R \pdb{x, \partial_x}/([\partial_x,x] = 1).$$
The complex of \cref{rmk: fdr not divided} for the additive formal group is also Koszul dual to a rescaled analogue $R\pw{t} \pdb{x, D}/([D,x] = t)$ of this Weyl algebra; this rescaling amounts to replacing $\partial_x$ by $t\partial_x$.
This fact has an analogue for arbitrary formal group laws, as we now explain. It turns out to be significantly more convenient to study the Weyl algebra of $\GG_m$ instead, so we will restrict to that case. Some of the discussion in this section appears briefly in \cite[Section 3.3]{coh-gr}. As usual, fix a base commutative ring $R$ and a formal group law $F(x,y)$ over $R$.
\begin{definition}\label{def: weyl algebra}
The \textit{$F$-Weyl algebra} of $\GG_m = \spec R[x^{\pm 1}]$ is defined to be the associative $R\pw{t}$-algebra given by
$$\fdiff{\GG_m} := R\pw{t}\pdb{x^{\pm 1},y}^\wedge_y/(yx = xF(y,t)).$$
\end{definition}
\begin{example}
For the additive formal group law, the relation imposed in \cref{def: weyl algebra} is just $yx = x(y+t)$, or equivalently that $[y,x] = tx$. This defines an isomorphism between $\fdiff{\GG_m}$ and the rescaled Weyl algebra for $\GG_m$ by sending $y$ to the rescaled vector field $tx\partial_x$.
\end{example}
\begin{example}
For the multiplicative formal group law, let us write $q = 1+t$. If we define $\tilde{y} = 1 + y$, the relation imposed in \cref{def: weyl algebra} is just 
$$(\tilde{y}-1)x = x(q\tilde{y}-1),$$
or equivalently that $\tilde{y}x = qx\tilde{y}$. Observe that $\tilde{y}$ acts as the operator $q^{x\partial_x}$, so that we obtain an isomorphism between $\fdiff{\GG_m}$ and the completion of $R\pw{q-1}\pdb{x^{\pm 1}, q^{x\partial_x}}/(q^{x\partial_x}x = qxq^{x\partial_x})$ at the ideal $(q^{x\partial_x}-1)$. This algebra is essentially the $q$-Weyl algebra of $\GG_m$, and is sometimes known as the ``quantum torus'', as well as the algebra of $q$-difference operators on the torus.
\end{example}
\begin{lemma}
The algebra $R\pw{t}[x^{\pm 1}]$ is canonically a left $\fdiff{\GG_m}$-module, where the action of $y$ sends $x^n \mapsto [n]_F x^n$.
\end{lemma}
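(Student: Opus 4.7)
The plan is to define the action on the $R\pw{t}$-basis $\{x^n\}_{n\in\Z}$ of $R\pw{t}[x^{\pm 1}]$ by the formulas $x\cdot x^n = x^{n+1}$ and $y\cdot x^n = [n]_F(t)\,x^n$, extend $R\pw{t}$-linearly, and verify both that this respects the defining relation $yx = xF(y,t)$ and that it extends continuously over the $y$-adic completion.

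First I would check the relation on a monomial $x^n$. On one side, $(yx)\cdot x^n = y\cdot x^{n+1} = [n+1]_F(t)\,x^{n+1}$. On the other, $y$ acts as the scalar $[n]_F(t)$ on the $R\pw{t}$-line spanned by $x^n$, so any power series in $y$ and $t$ acts on that line through scalar substitution: $F(y,t)\cdot x^n = F([n]_F(t),t)\,x^n$. By the recursive definition of the $n$-series, $F([n]_F(t),t) = [n]_F(t) +_F t = [n+1]_F(t)$, so $xF(y,t)\cdot x^n = [n+1]_F(t)\,x^{n+1}$, matching the other side. Since both $x$ and $y$ preserve the $R\pw{t}$-linear span of the monomials, this is enough to conclude that the relation is respected on all of $R\pw{t}[x^{\pm 1}]$.

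For the extension to the $y$-adic completion, the key observation is that $[n]_F(t)\in tR\pw{t}$ for every $n$, so $y^k$ carries $x^n$ into $t^k R\pw{t}\cdot x^n$. Hence $y$ acts topologically nilpotently in the $t$-adic topology on coefficients, and any $y$-adically convergent element of $\fdiff{\GG_m}$ --- written in a PBW-style normal form with the $y$'s on the right --- acts on a given Laurent polynomial through a coefficientwise $t$-adically convergent sum in $R\pw{t}$. The hard part will be the middle step: making rigorous the diagonal substitution $F(y,t)\cdot x^n = F([n]_F(t),t)\,x^n$, since $F(y,t)$ is a genuine power series element of the completion rather than a polynomial. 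This is legitimate precisely because $R\pw{t}\cdot x^n$ is a $y$-stable $R\pw{t}$-line on which $y$ acts by an element of the ideal $(t)$, so the $y$-adically convergent series $F(y,t)$, viewed as an operator, converges on this line to scalar multiplication by $F([n]_F(t),t)$.
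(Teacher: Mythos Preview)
Your proposal is correct and follows essentially the same route as the paper: both arguments check the relation $yx = xF(y,t)$ on a monomial $x^n$ by computing $(yx)\cdot x^n = [n+1]_F\,x^{n+1}$ and $xF(y,t)\cdot x^n = x\cdot F([n]_F,t)\,x^n = [n+1]_F\,x^{n+1}$. Your version is more careful than the paper's in that you also address why the action extends over the $y$-adic completion (via $[n]_F(t)\in tR\pw{t}$), a point the paper leaves implicit.
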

\begin{proof}
Observe that
$$(yx) (x^n) = [n+1]_F x^{n+1} = x F([n]_F, t) x^n = xF(y,t) x^n,$$
so that the action prescribed above satisfies the relation in \cref{def: weyl algebra}.
\end{proof}
The following result describing the complex from \cref{rmk: fdr not divided} as Koszul dual to the $F$-Weyl algebra is sketched in \cite[Proposition 3.3.9]{coh-gr}. We will not need this result below, so we only state it for completeness.
\begin{prop}\label{prop: weyl koszul dual}
The derived tensor product $R\pw{t}[x^{\pm 1}] \otimes_{\fdiff{\GG_m}} R\pw{t}[x^{\pm 1}]$ can be identified with the complex 
$$C^\bull \otimes_{R[x]} R[x^{\pm 1}] \cong \left(R\pw{t}[x^{\pm 1}] \to R\pw{t}[x^{\pm 1}] d\log(x)\right), \ x^n \mapsto [n]_F x^n d\log(x).$$
\end{prop}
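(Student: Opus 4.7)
The plan is to exhibit a length-one free resolution of $R\pw{t}[x^{\pm 1}]$ over $\fdiff{\GG_m}$ and then compute the tensor product directly. The key structural fact I need is that $\fdiff{\GG_m}$ admits a PBW basis $\{x^m y^k\}_{m \in \Z, k \geq 0}$ as a topological $R\pw{t}$-module (with respect to the $y$-adic topology built into the definition). This follows from the relation $yx = xF(y,t)$: because $F(y,0) = y$ for any formal group law, we have $F(y,t) - y \in (t) R\pw{t,y}$, so the commutation relation is a $t$-small perturbation of commutativity, and the straightening procedure (pushing every $y$ to the right of every $x$) converges in the $y$-adic topology. Equivalently, the associated graded of $\fdiff{\GG_m}$ for the $y$-adic filtration is the commutative ring $R\pw{t}[x^{\pm 1}]\pw{y}$.

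Granting the PBW basis, right multiplication by $y$ sends $x^m y^k \mapsto x^m y^{k+1}$, so it is an injection $\fdiff{\GG_m} \hookrightarrow \fdiff{\GG_m}$. The quotient $\fdiff{\GG_m}/\fdiff{\GG_m} \cdot y$ is therefore free on the cosets $\{x^m\}_{m \in \Z}$ as an $R\pw{t}$-module, and the induced left $\fdiff{\GG_m}$-action agrees with the one described in the lemma preceding the statement (since $y$ acts as zero on the coset of $1$, so on $x^n$ it acts as $[n]_F$ by the very relation used in the proof of that lemma). This yields the short exact sequence of left $\fdiff{\GG_m}$-modules
\[0 \to \fdiff{\GG_m} \xar{\cdot y} \fdiff{\GG_m} \to R\pw{t}[x^{\pm 1}] \to 0,\]
a free resolution of length one.

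Applying $R\pw{t}[x^{\pm 1}] \otimes_{\fdiff{\GG_m}} (-)$, the two free terms collapse to copies of $R\pw{t}[x^{\pm 1}]$, and the map induced by $\cdot y$ is, by definition, the operator $\nabla$ on $R\pw{t}[x^{\pm 1}]$ sending $x^n$ to $[n]_F x^n$. Relabeling the target copy of $R\pw{t}[x^{\pm 1}]$ as $R\pw{t}[x^{\pm 1}]\, d\log(x)$ gives exactly the complex claimed, namely $x^n \mapsto [n]_F x^n \, d\log(x)$. A brief consistency check matches this with the two-term complex $C^\bull \otimes_{R[x]} R[x^{\pm 1}]$ of \cref{rmk: fdr not divided} by comparing generators.

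The main obstacle is the PBW statement for $\fdiff{\GG_m}$: one must show that the evident straightening rule on monomials really does produce a unique normal form, and that the $y$-adic completion is well-behaved enough that no relations beyond $yx = xF(y,t)$ are introduced. I would handle this either by a filtered/associated-graded argument (filtering by $y$-degree and identifying the associated graded with a commutative polynomial ring over $R\pw{t}[x^{\pm 1}]$), or, equivalently, by realizing $\fdiff{\GG_m}$ as a topological Ore extension of $R\pw{t}[x^{\pm 1}]$ by $y$ with the automorphism/derivation determined by $F(y,t)$. Once PBW is in hand, the remaining steps are formal.
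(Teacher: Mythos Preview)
The paper does not actually prove this proposition: it only cites \cite[Proposition 3.3.9]{coh-gr} for a sketch and states the result for completeness. Your argument via the length-one Koszul resolution $\fdiff{\GG_m}\xrightarrow{y}\fdiff{\GG_m}\twoheadrightarrow R\pw{t}[x^{\pm 1}]$ is the standard one and is almost certainly what the cited reference does.

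One small point to clean up: you resolve the \emph{left} module by right multiplication $\cdot y$, then tensor on the left with $R\pw{t}[x^{\pm 1}]$. The induced differential is then the \emph{right} action of $y$ on $R\pw{t}[x^{\pm 1}]$, but the paper only specifies the left module structure (and the target complex uses the left action $x^n\mapsto [n]_F x^n$). The cleanest fix is to swap sides: resolve the right module $R\pw{t}[x^{\pm 1}]$ by \emph{left} multiplication $y\cdot$ (a map of right modules), then apply $(-)\otimes_{\fdiff{\GG_m}} R\pw{t}[x^{\pm 1}]$. The induced differential is then the left action of $y$, which is exactly $x^n\mapsto [n]_F x^n$ by the lemma preceding the proposition. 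Your PBW discussion is correct and is what makes either resolution exact.
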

\begin{remark}\label{rmk: affine grassmannian}
In \cite[Section 3.3]{coh-gr}, we show that if $R$ is a complex-oriented even-periodic $\Eoo$-ring and $F(x,y)$ is the formal group law over $\pi_0(R)$, then $\fdiff{\GG_m}$ arises as the loop-rotation equivariant homology $\pi_\ast (R[\Omega T]^{h(T \times S^1_\rot)})$ where $T$ is a compact torus of rank $1$ (i.e., a circle, not to be confused with the loop-rotation circle).\footnote{In fact, the first author initially came across the $F$-Weyl algebra in this manner, and this article was originally intended to be about this algebra. However, upon learning of the $F$-de Rham complex from Arpon Raksit, it became clear that the $F$-de Rham complex was a much simpler object to work with; hence the present form of the article.} We also explained that, under the discussion of \cref{rmk: homotopy theory}, the Koszul duality of \cref{prop: weyl koszul dual} is a manifestation of the Koszul duality between $R[\Omega T]^{h(T \times S^1_\rot)}$ and the negative cyclic homology $R[\cL T_+]^{hS^1_\rot} = \HC^-(R[\Omega T]/R)$.
\end{remark}
\begin{remark}[Mellin transform]
Let $\AA^1_{R\pw{t}}$ denote the affine line over $R\pw{t}$ with coordinate $y$, so that $\Z$ acts on $\AA^1_{R\pw{t}}$ via the map $y\mapsto F(y,t)$.
It follows from \cref{def: weyl algebra} and Morita theory that the category of $\fdiff{\GG_m}$-modules is equivalent to the category of quasicoherent sheaves on $\AA^1_{R\pw{t}}/\Z$ which are $y$-complete. In the case of the additive formal group law, this is a $t$-deformation of the Mellin transform, which gives an equivalence between the category of $\cd_{\GG_m}$-modules and quasicoherent sheaves on $\AA^1/\Z$ (where $\Z$ acts by $y\mapsto y+1$).
\end{remark}
Let us now describe some special properties of the center of $\fdiff{\GG_m}$.
\begin{recall}
One property satisfied by the ordinary Weyl algebra in characteristic $p>0$ is that it has a large center: namely, if $R$ is an $\FF_p$-algebra, there is an isomorphism $Z(\cd_{\GG_m}) \cong R[x^{\pm p}, x^p \partial_x^p]$ which identifies $Z(\cd_{\GG_m})$ with the ring of functions on the cotangent bundle of the Frobenius twist $(\GG_m)^{(p)}$. Note that $x^p \partial_x^p \equiv (x\partial_x)^p - x\partial_x \pmod{p}$.
Under the Koszul duality between $\cd_{\GG_m}$ and the de Rham complex, this identification of $Z(\cd_{\GG_m})$ is in fact Koszul dual to the Cartier isomorphism $\H^\ast(\Omega^\bull_{\GG_m/R}) \cong \Omega^\ast_{(\GG_m)^{(p)}/R}$.
\end{recall}
It is therefore natural to ask for a description of the center of $\fdiff{\GG_m}$; this leads to the following result, which is Koszul dual to the Cartier isomorphism of \cref{thm: omnibus-fgl}(b).
\begin{theorem}\label{thm: center Fdiff}
The center of $\fdiff{\GG_m}\otimes_{R\pw{t}} R\pw{t}/\pdb{p}_F$ can be identified as follows:
$$Z(\fdiff{\GG_m}\otimes_{R\pw{t}} R\pw{t}/\pdb{p}_F) \cong R\pw{t}\left[x^{\pm p}, \prod_{j=0}^{p-1} (y +_F [j]_F) \right]/\pdb{p}_F.$$
\end{theorem}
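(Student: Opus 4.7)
The plan is to exploit the commutation relation $yx = xF(y,t)$—which iterates to $yx^n = x^n(y +_F [n]_F)$ for all $n \in \Z$—to reduce centrality to a question about invariants of a formal-group translation action, and then to identify those invariants by viewing $[p]: \hat{\GG} \to \hat{\GG}$ as a $\Z/p$-torsor over $\{\pdb{p}_F = 0\}$. Throughout write $N(y) := \prod_{j=0}^{p-1}(y +_F [j]_F)$.

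First I would verify the two generators are central mod $\pdb{p}_F$. Since $[np]_F(t) = [n]_F([p]_F(t))$ and $[p]_F = t\pdb{p}_F \equiv 0 \pmod{\pdb{p}_F}$, the iterated relation yields $yx^{np} \equiv x^{np}y$, so $x^{\pm p}$ is central. The element $N(y)$ commutes with $y$ trivially, and $N(y)\cdot x = x\cdot N(y +_F t) = x\prod_{j=0}^{p-1}(y +_F [j+1]_F)$; reindexing and using $[p]_F \equiv [0]_F$ to identify the shifted $j=p$ factor with the original $j=0$ factor delivers $xN(y)$.

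For the reverse inclusion I would expand a central element uniquely as $\sum_n x^n f_n(y)$ with $f_n \in R\pw{t}\pw{y}$. Factoring $F(y,z) - y = z \cdot g(y,z)$ with $g(0,0)=1$ (so $g(y,[n]_F)$ is a unit in $R\pw{t}\pw{y}$), the commutator is $[y, x^n f_n(y)] = x^n \cdot [n]_F \cdot g(y,[n]_F) \cdot f_n(y)$, so centrality with $y$ modulo $\pdb{p}_F$ reduces to $[n]_F f_n \equiv 0$. For $p \nmid n$, applying \cref{lem: s of a unit mod n is a unit mod s(n)} to the GNS $s = \pdb{\,\cdot\,}_F$ (for which $s(1)=1$) shows $\pdb{n}_F$ is a unit modulo $\pdb{p}_F$; hence $[n]_F = t\pdb{n}_F$ annihilates $f_n$ iff $tf_n \equiv 0$, and under the standing hypothesis that $t$ is a non-zero-divisor in $R\pw{t}/\pdb{p}_F$ (automatic when that quotient is a domain, as in the Lubin--Tate setting) we conclude $f_n \equiv 0$. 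The surviving terms have $n = pm$, and commutation with $x$ then yields $f_{pm}(y +_F t) \equiv f_{pm}(y) \pmod{\pdb{p}_F}$, i.e., invariance under the order-$p$ translation action $y \mapsto y +_F t$.

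The crux is to identify $(R\pw{t}/\pdb{p}_F\pw{y})^{\Z/p}$ with $R\pw{t}/\pdb{p}_F\pw{N(y)}$. Note that $[p]_F(y)$ is invariant, since $[p]_F(y +_F t) = [p]_F(y) +_F [p]_F(t) \equiv [p]_F(y)$; and modulo $\pdb{p}_F$ both $N(y)$ and $[p]_F(y)$ are power series in $y$ whose roots are the $p$-torsion points $-_F[j]_F$ for $0 \le j < p$, so Weierstrass preparation makes them agree up to a unit of $(R\pw{t}/\pdb{p}_F)^\times$. Hence $R\pw{t}/\pdb{p}_F\pw{N(y)} = R\pw{t}/\pdb{p}_F\pw{[p]_F(y)}$, and Weierstrass preparation further presents $R\pw{t}/\pdb{p}_F\pw{y}$ as a free module of rank $p$ over this subring with basis $1, y, \ldots, y^{p-1}$. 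The induced map of formal schemes is the restriction of the degree-$p$ isogeny $[p]: \hat{\GG} \to \hat{\GG}$ over $\{\pdb{p}_F = 0\}$, on which the kernel $\hat{\GG}[p] \setminus \{0\}$ (parametrized by $t$) acts freely by translation, realizing the map as a $\Z/p$-torsor and giving the invariants identification. The main obstacle is executing this last step rigorously without inverting $p$, since the averaging operator $\tfrac{1}{p}\sum_j$ is unavailable; one clean route is to extend scalars to $R\pw{t}/\pdb{p}_F[1/p]$, where averaging identifies invariants on the generic fiber, then descend via normality of $R\pw{t}/\pdb{p}_F\pw{[p]_F(y)}$ (a two-dimensional regular local ring when $R\pw{t}/\pdb{p}_F$ is a DVR, as in the $R = W(k)$ case). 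Alternatively, one can proceed by hand: expand an invariant as $f(y) = \sum_{i=0}^{p-1} a_i(N(y)) y^i$ in the rank-$p$ basis and compare with $f(y +_F t)$ term by term, forcing $a_i \equiv 0$ for $i \neq 0$.
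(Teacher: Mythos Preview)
Your verification that $x^{\pm p}$ and $N(y)$ are central modulo $\pdb{p}_F$ matches the paper's argument exactly; the paper then simply asserts the resulting inclusion ``can be checked to be an isomorphism,'' so your treatment of the reverse inclusion goes beyond what is supplied there. Your reduction is sound up to the invariants step: expanding a central element as $\sum_n x^n f_n(y)$, commuting with $y$ kills the $p \nmid n$ terms (granted that $t$ is regular in $A := R\pw{t}/\pdb{p}_F$), and commuting with $x$ forces each surviving $f_{pm}$ to be invariant under $\sigma: y \mapsto y +_F t$. The regularity hypothesis you flag is genuinely needed for the statement as written: for the multiplicative law over $R = \FF_p$ with $p > 2$ one has $\pdb{p}_F = t^{p-1}$, and then $t^{p-2}x$ is central (since $[y,t^{p-2}x] = t^{p-2}x\cdot t(1+y) = 0$) but visibly not in the proposed subring.

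The genuine gap is your identification of $N(y)$ with $[p]_F(y)$ up to a unit in $A\pw{y}$: this is false outside height $1$. For the additive law over $\Z$ one has $A = \FF_p\pw{t}$ and $N(y) = y^p - t^{p-1}y$ (as in the paper's first example following the theorem), whereas $[p]_F(y) = py \equiv 0$. Conceptually, $[p]_F(y) = 0$ cuts out the full $p$-torsion of $\hat{\GG}_A$ in the $y$-coordinate---of order $p^h$ in height $h$, and all of $\hat{\GG}_a$ in the additive case---while $N(y) = 0$ cuts out only the orbit of the origin under translation by the single $p$-torsion point $t \in A$; these agree only in height $1$, so the $[p]$-torsor picture collapses in general. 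The salvage is exactly the direct route you sketch at the end: since $N(y) \equiv y^p \pmod t$, Weierstrass division makes $A\pw{y}$ free of rank $p$ over $C := A\pw{N(y)}$ on $1,\ldots,y^{p-1}$; writing an invariant as $\sum_{i} a_i(N)\,y^i$, dividing $\sigma(f)-f$ by $t$ and reducing modulo $t$ gives $\sum_i i\,\bar a_i\,y^{i-1}\cdot(\text{unit}) = 0$ in $(R/p)\pw{y}$, and since the $\bar a_i \in (R/p)\pw{y^p}$ occupy pairwise distinct residue classes of $y$-degree mod $p$ this forces $a_i \in tC$ for $1 \le i \le p-1$; iterating and invoking $t$-adic separatedness yields $a_i = 0$.
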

\begin{proof}
Observe that
$$yx^p = x^p F(y,[p]_F) \equiv x^p y\pmod{\pdb{p}_F},$$
so that $x^p$ is in the center of $\fdiff{\GG_m}\otimes_{R\pw{t}} R\pw{t}/\pdb{p}_F$.
Similarly, since $\prod_{j=0}^{p-1} (y +_F [j]_F) \equiv \prod_{j=0}^{p-1} (y +_F [j]_F) \pmod{\pdb{p}_F}$, we have
$$\prod_{j=0}^{p-1} (y +_F [j]_F) x = x \prod_{j=1}^p (y +_F [j]_F) \equiv x \prod_{j=0}^{p-1} (y +_F [j]_F) \pmod{\pdb{p}_F},$$
and hence $\prod_{j=0}^{p-1} (y +_F [j]_F)$ is in the center of $\fdiff{\GG_m}\otimes_{R\pw{t}} R\pw{t}/\pdb{p}_F$. This defines an inclusion 
$$R\pw{t}\left[x^p, \prod_{j=0}^{p-1} (y +_F [j]_F) \right]/\pdb{p}_F \subseteq Z(\fdiff{\GG_m}\otimes_{R\pw{t}} R\pw{t}/\pdb{p}_F),$$
which can be checked to be an isomorphism.
\end{proof}
\begin{example}
For the additive formal group law over $\Z$ (say), we have
$$\prod_{j=0}^{p-1} (y + jt) \equiv y^p - t^{p-1} y \pmod{p},$$
so that upon identifying $y = x\partial_x$, \cref{thm: center Fdiff} implies the following isomorphism for the rescaled Weyl algebra:
$$Z(\fdiff{\GG_m}/p) \cong \FF_p\pw{t}[x^{\pm p}, (x\partial_x)^p - t^{p-1} x\partial_x].$$
Regarding the above algebra as a graded ring with both $t$ and $x\partial_x$ in weight $1$ allows us to replace $t$ by a polynomial generator (instead of a power series variable). Inverting $t$ and taking $\GG_m$-invariants (which is to be thought of as setting $t=1$) then produces the ring $\FF_p[x^{\pm p}, t^{-p} x^p \partial_x^p] = \co_{T^\ast \GG_m^{(1)}}$.
\end{example}
\begin{example}
For the multiplicative formal group law over $\Z$ (say), if we define $\tilde{y} = y+1$ as above (so that $\tilde{y} = q^{x\partial_x}$), we have
$$\prod_{j=0}^{p-1} (y +_F (q^j-1)) = \prod_{j=0}^{p-1} (q^j \tilde{y}-1) \equiv q^{p(p-1)/2} (\tilde{y}^p - 1) \pmod{[p]_q}.$$
Note that $q^{p(p-1)/2}$ is $1\pmod{[p]_q}$ for $p>2$, but is $-1\pmod{[2]_q}$. In either case, $q^{p(p-1)/2}$ is a unit, so \cref{thm: center Fdiff} implies the following isomorphism:
$$Z(\fdiff{\GG_m}/[p]_q) \cong \Z[\zeta_p][x^{\pm p}, q^{p x \partial_x}].$$
\end{example}
\begin{remark}
Since $\fdiff{\GG_m}$ can be recovered from the equivariant homology $\pi_\ast (R[\Omega T]^{h(T \times S^1_\rot)})$ for a compact torus $T$ of rank $1$ (see \cref{rmk: affine grassmannian}), it is natural to wonder whether there is an explanation of \cref{thm: center Fdiff} from the perspective of the affine Grassmannian. In the case when $R$ is ordinary (integral) homology or K-theory, this has been answered in \cite{lonergan-steenrod} --- in fact, the methods there are sufficiently geometric that they work even for more general $R$ (and for $T$ replaced by a more general connected compact Lie group!), so we refer the reader to \textit{loc. cit.} for further discussion of this question.
\end{remark}
\subsection{An analogue of the Bhatt-Lurie Cartesian square}
\label{sec: bhatt-lurie-analogue}

Throughout this section, we will fix a $p$-completely flat $\Z_p$-algebra $R$ and a formal group law $F(x,y)$ over $R$ (so $R$ is torsionfree). The symbol $R\pw{t}$ will always denote the \textit{$p$-adic completion} of the formal power series ring, and all constructions will be done internal to the category of $(p,t)$-adically $R\pw{t}$-schemes. (We have omitted the completion from the notation for readability.) Let $\hat{\GG}$ denote the associated formal group, so that its underlying formal scheme is $\spf R\pw{t}$.

In \cite[Lemma 3.5.18]{apc}, Bhatt-Lurie showed that there is a Cartesian square of group schemes over $R$:
\begin{equation}\label{eq: gm-sharp-log-square}
    \xymatrix{
    \GG_m^\sharp \ar[r]^-{\log} \ar[d] & \GG_a^\sharp \ar[d]^-{x\mapsto \exp(px)} \\
    \GG_m \ar[r]_-{x\mapsto x^p} & \GG_m^{(1)}.
    }
\end{equation}
Taking vertical quotients, we obtain an isomorphism between $\GG_m/\GG_m^\sharp$ and $\GG_m^{(1)}/\GG_a^\sharp$; one can identify $\GG_m/\GG_m^\sharp$ with the ``de Rham stack'' $\GG_m^\dR$ of $\GG_m$ (see, e.g., \cite{bhatt-f-gauge-lectures}), so that this isomorphism describes $\GG_m^\dR$ in terms of the group scheme $\GG_a^\sharp$.

The proof of \cref{eq: gm-sharp-log-square} in \textit{loc. cit.} used the relationship between the group schemes appearing in the square and the ring scheme of Witt vectors.
In this section, we prove an $F$-analogue of this result (see \cref{thm: BL-analogue}); in the case of the additive formal group law, this reproves \cref{eq: gm-sharp-log-square}. Let us state at the outset that in the case when $F$ is the multiplicative formal group law, this result was obtained in a discussion between the first author and Michael Kural. Moreover, the argument in this section rests crucially on \cref{eq: basis expression for betan-Flogy}, the $q$-analogue of which (\cref{ex: michael's identity}) was proved by Michael Kural. Any errors below are solely the fault of the first author!
\begin{definition}
\cref{rmk: rescaled-fgl} gives a formal group $\hat{\GG}_t$ over $\spf R\pw{t}$ whose logarithm is $\tilde{\ell}_F(x) = \frac{1}{t} \ell_F(tx)$. Let $x$ denote the coordinate on $\hat{\GG}_t$, so that its underlying formal scheme is $\spf R\pw{t,x}$. Let $\hat{\GG}_t^\vee$ denote the Cartier dual $\Hom(\hat{\GG}_t, (\GG_m)_{R\pw{t}})$ of $\hat{\GG}_t$; see \cite[Section 3]{drinfeld-formal-group} for some generalities on Cartier duals of formal groups.
The element $x\in \co_{\hat{\GG}_t}$ defines a homomorphism $\tau: \hat{\GG}_t^\vee \to (\GG_a)_{R\pw{t}}$.
\end{definition}
\begin{observe}\label{obs: beta_n notation}
Over $(R\otimes \QQ)\pw{t}$, the rescaled logarithm $\tilde{\ell}_F$ of \cref{rmk: rescaled-fgl} defines an isomorphism $\tilde{\ell}_F: \hat{\GG}_t \xar{\sim} (\hat{\GG}_a)_{(R\otimes \QQ)\pw{t}}$ of formal groups. Therefore, the canonical pairing $\hat{\GG}_t \times_{R\pw{t}} \hat{\GG}_t^\vee \to (\GG_m)_{R\pw{t}}$ fits into a diagram
$$\xymatrix{
\hat{\GG}_t \times_{R\pw{t}} \hat{\GG}_t^\vee \ar[dr]^-\mu \ar[d]^-\sim_-{\tilde{\ell}_F\times \id} & \\
(\hat{\GG}_a)_{(R\otimes \QQ)\pw{t}} \times_{R\pw{t}} \hat{\GG}_t^\vee \ar[r]_-{\nu} &
(\GG_m)_{(R\otimes \QQ)\pw{t}}.}$$
Since $R\pw{t}$ is $(p,t)$-adically complete, the Cartier dual of $(\hat{\GG}_a)_{R\pw{t}}$ can be identified with the divided power completion $(\GG_a^\sharp)_{R\pw{t}}$. The pairing $\nu$ is base-changed from $R\pw{t}$ itself, where it is given by the formula
$$\nu: (x,y) \mapsto \exp(xy).$$
It follows that the pairing $\mu$ is given by
$$\mu(x,y) = \exp(\tilde{\ell}_F(x) y).$$
This can be expanded as a power series in $x$:
$$\mu(x,y) = \sum_{n\geq 0} \beta_n(y) x^n.$$
Unwinding the definition of the Cartier dual, and using that $R\pw{t}$ is $p$-torsionfree (using our assumption that $R$ is a $p$-completely flat $\Z_p$-algebra), we see that the ring of functions on $\hat{\GG}_t^\vee$ has underlying $R\pw{t}$-module given by (the $(p,t)$-adic completion of)
$$\co_{\hat{\GG}_t^\vee} = R\pw{t}\{\beta_n(y)\}_{n\geq 0}.$$
\end{observe}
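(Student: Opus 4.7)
The plan is to establish each assertion in the observation by exploiting the standard integral Cartier duality for the formal additive group, together with the rational identification of $\hat{\GG}_t$ with $\hat{\GG}_a$ via the rescaled logarithm. First, I would verify that $\tilde{\ell}_F(x) = t^{-1}\ell_F(tx)$ defines an isomorphism of formal groups $\tilde{\ell}_F: \hat{\GG}_t \xrightarrow{\sim} (\hat{\GG}_a)_{(R\otimes\QQ)\pw{t}}$: viewed as a power series in $x$ over $(R\otimes\QQ)\pw{t}$, it has no polar $t$-terms (since $x \mid \ell_F(x)$, as in \cref{rmk: rescaled-fgl}), has linear coefficient $1$ in $x$ hence is invertible, and intertwines $\tilde{F}$ with addition by the defining property $\ell_F(F(x,y)) = \ell_F(x) + \ell_F(y)$ after rescaling.

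Next, for the formal additive group $(\hat{\GG}_a)_{R\pw{t}}$ over the $(p,t)$-adically complete base, the Cartier dual is identified integrally with $(\GG_a^\sharp)_{R\pw{t}}$ through the universal character $\nu(x,y) = \exp(xy) = \sum_{n\geq 0} y^{[n]} x^n$: this is classical integral Cartier duality, using that $\co_{\GG_a^\sharp}$ is the free $R\pw{t}$-module on the divided powers $\{y^{[n]}\}_{n\geq 0}$. Cartier-dualizing $\tilde{\ell}_F$ gives a rational isomorphism between $\hat{\GG}_t^\vee \otimes \QQ$ and $(\GG_a^\sharp)_{(R\otimes\QQ)\pw{t}}$, under which the pairing $\mu$ becomes $\mu(x,y) = \exp(\tilde{\ell}_F(x) y)$; expanding in $x$ defines the coefficients $\beta_n(y)$, which a priori lie in $\co_{\hat{\GG}_t^\vee} \otimes \QQ$ but in fact land in the integral ring $\co_{\hat{\GG}_t^\vee}$ because $\mu$ is the universal character of $\hat{\GG}_t$ and hence is defined over $R\pw{t}$ to begin with.

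To prove the structural claim that $\{\beta_n(y)\}_{n\geq 0}$ forms an $R\pw{t}$-module basis of $\co_{\hat{\GG}_t^\vee}$ (after $(p,t)$-adic completion), I would combine the universal property of the Cartier dual with an invertibility argument for the change of basis. The universal property says that an $R\pw{t}$-algebra map $\co_{\hat{\GG}_t^\vee} \to S$ unwinds to a group-like element $u(x) = \sum_n c_n x^n \in S\pw{x}$ specified by coefficients $c_n = u^\ast(\beta_n)$, so the $\beta_n$ tautologically generate $\co_{\hat{\GG}_t^\vee}$ as a topological $R\pw{t}$-module. For linear independence, I would base change to $(R\otimes\QQ)\pw{t}$, invoke the rational identification $\hat{\GG}_t^\vee \otimes \QQ \cong \GG_a^\sharp$ with its free basis $\{y^{[n]}\}$, and then use that $R\pw{t}$ is $p$-torsionfree (since $R$ is $p$-completely flat over $\Z_p$) so that $\co_{\hat{\GG}_t^\vee} \hookrightarrow \co_{\hat{\GG}_t^\vee} \otimes \QQ$ is injective. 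The main obstacle I expect is verifying that the rational change-of-basis matrix from $\{\beta_n\}$ to $\{y^{[n]}\}$ is invertible over $R\pw{t}$ itself (not merely over $(R\otimes\QQ)\pw{t}$): this should follow from $\tilde{\ell}_F(x) = x + O(tx^2)$ in $(R\otimes\QQ)\pw{t}\pw{x}$, which makes the matrix unitriangular modulo each power of $t$ and hence invertible in the $t$-adic completion.
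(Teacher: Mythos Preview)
Your proposal is correct and follows the same line of reasoning the paper embeds in the observation itself: identify $\hat{\GG}_t$ rationally with $\hat{\GG}_a$ via $\tilde{\ell}_F$, use the standard integral Cartier duality $\hat{\GG}_a^\vee \cong \GG_a^\sharp$ with universal character $\exp(xy)$, and read off $\mu(x,y)=\exp(\tilde{\ell}_F(x)y)$; then invoke $p$-torsionfreeness to descend from $(R\otimes\QQ)\pw{t}$ back to $R\pw{t}$.

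The only place you diverge from the paper is your last paragraph. The paper simply says ``unwinding the definition of the Cartier dual'' to conclude that $\{\beta_n\}$ is a basis, and this is meant tautologically: the coordinate ring of $\hat{\GG}_t^\vee$ is by construction the continuous $R\pw{t}$-linear dual of $R\pw{t}\pw{x}$, so the coefficients $\beta_n$ of the universal character $\mu$ in the $x$-expansion are literally the dual basis to $\{x^n\}$. Your unitriangular change-of-basis argument (using $\tilde{\ell}_F(x)\equiv x \pmod{t}$, hence $\beta_n \equiv y^{[n]} \pmod{t}$) is a perfectly valid and more explicit way to see the same thing, and it has the virtue of making the integral structure concrete; but it is not needed once one takes the tautological viewpoint. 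In short: same approach, with your version supplying an extra hands-on check where the paper is content to gesture at the definition.
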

\begin{example}\label{ex: gm-cartier-dual}
When $F$ is the multiplicative formal group law, the function $\mu$ is simply
$$\mu(x,y) = \exp\left(\frac{y}{q-1} \log(1+(q-1)x)\right) = (1+(q-1)x)^{y/(q-1)};$$
its power series expansion is given by
$$\mu(x,y) = \sum_{n\geq 0} \frac{\prod_{j=0}^{n-1} (y - j(q-1))}{n!} x^n.$$
This expression plays an important role in \cite{drinfeld-formal-group}.
\end{example}
\begin{example}
When $F$ is the hyperbolic formal group law (so that $\ell_F(x) = \tanh^{-1}(x) = \frac{1}{2} \log\left(\frac{1+x}{1-x}\right)$), the function $\mu$ is
$$\mu(x,y) = \exp\left(\frac{y}{2(q-1)} \log\left(\frac{1+(q-1)x}{1-(q-1)x}\right)\right) = \left(\frac{1+(q-1)x}{1-(q-1)x}\right)^{y/2(q-1)}.$$
The power series expansion of this function is somewhat complicated: one can show that upon writing $\mu(x,y) = \sum_{n\geq 0} \beta_n(y) x^n$, we have $\beta_0 = 1$, $\beta_1 = y$, and there is a recurrence
$$\beta_{n+2}(y) = \frac{y\beta_{n+1}(y) + n(q-1)^2 \beta_n(y)}{n+2}.$$
Using a computer, one can compute that the first few terms of this expansion are
\begin{align*}
    \mu(x,y) & = 1 + y x + \frac{y^2}{2} x^2 + \frac{2 (q-1)^2 y + y^3}{3!} x^3 + \frac{8 (q-1)^2 y^2 + y^4}{4!} x^4 \\
    & + \frac{24 (q-1)^4 y + 20 (q-1)^2 y^3 + y^5}{5!} x^5 + \cdots.
\end{align*}
Observe that $\beta_n(y) \equiv \frac{y^n}{n!} \pmod{2}$, reflecting the fact that the base-change of the hyperbolic formal group to $\FF_2$ is isomorphic to the additive formal group.
\end{example}
\begin{definition}\label{def: Gm-sharp F}
Let $\GG_m^{\sharp, F}$ denote the formal scheme over $\spf R\pw{t}$ given by (the $(p,t)$-adic completion of)
$$\GG_m^{\sharp, F} = \spf R\pw{t}\left[y^{\pm 1}, \frac{(y-1)^n_s}{n!_F}\right]_{n\geq 0}.$$
This can be viewed as the ``$F$-divided power hull'' of the identity section of $(\GG_m)_{R\pw{t}}$.
Equip $\GG_m^{\sharp, F}$ with the structure of a group scheme where the coproduct sends $y\mapsto y\otimes y$. It is not immediate that this is well-defined, but we will prove this below in \cref{cor: Gm-sharp is well-defined}. There is a canonical homomorphism $\can: \GG_m^{\sharp, F} \to (\GG_m)_{R\pw{t}}$.

Note that \cref{rmk: Flog lives in divided power} implies that $F\log(y)$ defines an element of the coordinate ring of $\GG_m^{\sharp, F}$, i.e., it defines a map $F\log: \GG_m^{\sharp, F} \to (\GG_a)_{R\pw{t}}$. This is in fact a homomorphism, since $F\log(y_1 y_2) = F\log(y_1) + F\log(y_2)$.
\end{definition}
\begin{prop}\label{prop: michael's result}
Work over the base $(R\otimes \QQ)\pw{t}$. Then, the iterated $F$-derivative of $\mu(x,F\log(y))$ with respect to the variable $y$ is given by
\begin{equation}\label{eq: F-derivative iterated}
    \nabla_{F,y}^n \mu(x,F\log(y)) = \frac{x(x+_{\tilde{F}}\pdb{-1}_F(t)) \cdots (x+_{\tilde{F}}\pdb{-n+1}_F(t))}{y^n} \mu(x,F\log(y)).
\end{equation}
\end{prop}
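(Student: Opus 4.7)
The plan is to reduce \eqref{eq: F-derivative iterated} to a direct computation using the explicit formula for $\nabla_F$ over a $\QQ$-algebra from \cref{prop: rational-qiso}. First I would unwind the definitions: since $\mu(x,z) = \exp(\tilde\ell_F(x) z)$ and $F\log(y) = \tfrac{t}{\ell_F(t)}\log y$, we have
$$\mu(x, F\log(y)) = \exp\left(\frac{\ell_F(tx)}{\ell_F(t)}\log y\right),$$
so $\mu(x,F\log(y))$ is an ``eigenvector'' for the Euler operator $y\partial_y$ with eigenvalue $\lambda := \ell_F(tx)/\ell_F(t)$. More generally, $y\partial_y(y^{-m}\mu) = (\lambda-m)\, y^{-m}\mu$ and hence $(y\partial_y)^k(y^{-m}\mu) = (\lambda-m)^k\, y^{-m}\mu$ for every $m,k\geq 0$.

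Next, apply \cref{prop: rational-qiso} with $y$ in place of $x$: the operator $\nabla_{F,y}$ equals $\tfrac{1}{ty}\ce_F(\ell_F(t)\, y\partial_y)$. Feeding the eigenvalue calculation into this formula gives the key identity
\begin{align*}
\nabla_{F,y}(y^{-m}\mu) &= \tfrac{1}{ty^{m+1}}\,\ce_F\!\left(\ell_F(t)(\lambda-m)\right)\mu \\
&= \tfrac{1}{ty^{m+1}}\left(\ce_F(\ell_F(tx)) +_F \ce_F(-m\ell_F(t))\right)\mu \\
&= \tfrac{1}{ty^{m+1}}\left(tx +_F [-m]_F(t)\right)\mu \;=\; \tfrac{x +_{\tilde F} \pdb{-m}_F(t)}{y^{m+1}}\,\mu,
\end{align*}
where I use that $\ce_F$ is the formal exponential of $F$ (so $\ce_F(a+b) = \ce_F(a) +_F \ce_F(b)$), together with the definitions $\tilde F(a,b) = t^{-1} F(ta,tb)$ and $\pdb{n}_F = [n]_F/t$.

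With this boxed identity in hand, \eqref{eq: F-derivative iterated} follows by a straightforward induction on $n$. The base case $n=1$ is the $m=0$ instance (using $\pdb{0}_F = 0$, so $x +_{\tilde F} \pdb{0}_F = x$). For the inductive step, applying $\nabla_{F,y}$ once more to the $n$th formula and invoking the identity with $m=n$ produces the next factor $x +_{\tilde F} \pdb{-n}_F(t)$ and replaces $y^{-n}$ with $y^{-(n+1)}$. The main (mild) subtlety is simply to confirm that all formal manipulations take place inside a well-behaved ring; since we work over $(R\otimes\QQ)\pw{t}$ after inverting $y$ and treat $\mu(x, F\log(y))$ as a single symbol on which $y\partial_y$ acts diagonally, no convergence or interpretation issue arises, and the $F$-derivative formula transported from \cref{prop: rational-qiso} applies without change.
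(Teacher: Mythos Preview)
Your proposal is correct and follows essentially the same approach as the paper: both rewrite $\mu(x,F\log(y))$ as $y^{\lambda}$ with $\lambda=\ell_F(tx)/\ell_F(t)$, use that $y\partial_y$ acts on $y^{-m}\mu$ with eigenvalue $\lambda-m$, plug this into the formula $\nabla_{F,y}=\tfrac{1}{ty}\ce_F(\ell_F(t)\,y\partial_y)$ from \cref{prop: rational-qiso}, simplify $\ce_F(\ell_F(tx)-m\ell_F(t))=tx+_F[-m]_F(t)$, and induct. Your write-up is in fact slightly more economical, since you establish the general identity $\nabla_{F,y}(y^{-m}\mu)=\tfrac{x+_{\tilde F}\pdb{-m}_F(t)}{y^{m+1}}\mu$ once and then induct, whereas the paper first treats $n=1$ separately before redoing the same computation with $j$ in place of $0$.
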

\begin{proof}
Observe that:
\begin{align*}
    \mu(x, F\log(y)) & = \sum_{n\geq 0} \beta_n(F\log(y)) x^n = \exp(F\log(y) \tilde{\ell}_F(x)) \\
    & = \exp\left(\frac{t}{\ell_F(t)} \log(y) \cdot \frac{\ell_F(tx)}{t}\right) = \exp\left(\log(y) \frac{\ell_F(tx)}{\ell_F(t)}\right) = y^{\frac{\ell_F(tx)}{\ell_F(t)}};
\end{align*}
the third equality used the definition of $F\log(y)$ via \cref{cor: F-log} and the definition of $\tilde{\ell}_F(x)$ via \cref{rmk: rescaled-fgl}. One can deduce \cref{eq: F-derivative iterated} from this; let us illustrate this rather inefficiently. Let us write $a = \frac{\ell_F(tx)}{\ell_F(t)}$ for notational simplicity, so that
$$y^a = \sum_{m\geq 0} \frac{a(a-1) \cdots (a-(m-1))}{m!} (y-1)^m,$$
and $\partial_y y^a = a y^{a-1}$.

We can now inductively compute the iterated $F$-derivative using \cref{prop: rational-qiso}. We begin with the base case $n=1$. 
Note that $(y\partial_y) y^{a} = a y^{a}$, so that
\begin{equation}\label{eq: iterated y-partial_y}
    (y\partial_y)^m y^{a} = a^m y^{a}
\end{equation}
by an easy induction on $m$.
Write $\ce_F(z) = \sum_{m\geq 0} b_m z^m$; then using \cref{eq: iterated y-partial_y}, we have:
\begin{align*}
    \nabla_{F,y} \mu(x, F\log(y)) & = \frac{1}{yt} \sum_{m\geq 0} b_m \ell_F(t)^m (y\partial_y)^m y^{a} = \frac{1}{yt} \sum_{m\geq 0} b_m \ell_F(t)^m a^m y^{a} \\
    & = \frac{y^{a}}{yt} \sum_{m\geq 0} b_m \ell_F(tx)^m = \frac{y^{a}}{yt} \ce_F(\ell_F(tx)) = \frac{tx}{yt} \cdot y^{a} \\
    & = \frac{x}{y} y^{a} = \frac{x}{y} \mu(x, F\log(y)),
\end{align*}
as desired.

The proof of the iterated $F$-derivative is similar. Indeed, note that \cref{eq: iterated y-partial_y} implies that for any $j\geq 0$, we have:
\begin{equation}\label{eq: redux iterated y-partial_y}
    (y\partial_y)^m \left(\frac{\mu(x,F\log(y))}{y^j}\right) = (y\partial_y)^m y^{a-j} = (a-j)^m y^{a-j} = (a-j)^m \frac{\mu(x,F\log(y))}{y^j}.
\end{equation}
Assume that \cref{eq: F-derivative iterated} holds for $n$; then:
\begin{align}
    \nabla_{F,y}^{n+1} \mu(x,F\log(y)) & = \nabla_{F,y} \nabla_{F,y}^n \mu(x,F\log(y)) \nonumber \\
    & = x(x+_{\tilde{F}}\pdb{-1}_F(t)) \cdots (x+_{\tilde{F}}\pdb{-n+1}_F(t)) \nabla_{F,y} \left(\frac{\mu(x,F\log(y))}{y^n}\right). \label{eq: nablaF-n+1}
\end{align}
The derivative on the right-hand side can be calculated as follows:
\begin{align*}
    \nabla_{F,y} \left(\frac{\mu(x,F\log(y))}{y^j}\right) & = \frac{1}{yt} \sum_{m\geq 0} a_m \ell_F(t)^m (y\partial_y)^m \left(\frac{\mu(x,F\log(y))}{y^j}\right) \\
    & = \frac{1}{yt} \sum_{m\geq 0} a_m \ell_F(t)^m \left(\frac{\ell_F(tx)}{\ell_F(t)} - j\right)^m \frac{\mu(x,F\log(y))}{y^j} \\
    & = \frac{\mu(x,F\log(y))}{y^j} \frac{1}{yt} \sum_{m\geq 0} a_m (\ell_F(tx) - j\ell_F(t))^m \\
    & = \frac{\mu(x,F\log(y))}{y^j} \frac{1}{yt} \ce_F(\ell_F(tx) - j\ell_F(t)).
\end{align*}
Since 
$$\ell_F(tx) - j\ell_F(t) = \ell_F(tx) + \ell_F([-j]_F(t)) = \ell_F(tx +_F [-j]_F(t))),$$
this becomes
\begin{align*}
    \nabla_{F,y} \left(\frac{\mu(x,F\log(y))}{y^j}\right) & = \frac{\mu(x,F\log(y))}{y^j} \frac{1}{yt} \ce_F(\ell_F(tx +_F [-j]_F(t))) \\
    & = \frac{\mu(x,F\log(y))}{y^j} \frac{x +_{\tilde{F}} \pdb{-j}_F(t)}{y}.
\end{align*}
Plugging this into \cref{eq: nablaF-n+1}, we get that
$$\nabla_{F,y}^{n+1} \mu(x,F\log(y)) = x(x+_{\tilde{F}}\pdb{-1}_F(t)) \cdots (x+_{\tilde{F}}\pdb{-n+1}_F(t))(x+_{\tilde{F}}\pdb{-n}_F(t)) \frac{\mu(x, F\log(y))}{y^{n+1}},$$
as desired.
\end{proof}
\begin{corollary}\label{cor: Flog has pd}
There is a dotted map (which is a homomorphism over $R\pw{t}$) filling in the following diagram:
$$\xymatrix{ 
& & \hat{\GG}_t^\vee \ar[d]^-\tau \\
\GG_m^{\sharp, F} \ar[rr]_{y\mapsto F\log(y)} \ar@{-->}[urr] & & (\GG_a)_{R\pw{t}}.
}$$
\end{corollary}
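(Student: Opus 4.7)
The plan is to use the Cartier dual description: giving a map $\phi \colon \GG_m^{\sharp, F} \to \hat{\GG}_t^\vee$ is equivalent to giving a group-scheme homomorphism $(\hat{\GG}_t)_{\GG_m^{\sharp, F}} \to (\GG_m)_{\GG_m^{\sharp, F}}$, and hence to a group-like power series $\mu'(x) \in \co_{\GG_m^{\sharp, F}} \pw{x}$ satisfying $\mu'(\tilde F(x_1, x_2)) = \mu'(x_1) \mu'(x_2)$. My candidate is $\mu'(x) = \mu(x, F\log(y))$, where $y$ denotes the coordinate on $\GG_m^{\sharp, F}$. The group-likeness in $x$ is automatic from the fact that $\mu$ is a homomorphism in its first variable, so the only content of the corollary is to verify that $\mu(x, F\log(y))$, a priori an element of $(R\otimes\QQ)\pw{t, y - 1}\pw{x}$, actually lies in $\co_{\GG_m^{\sharp, F}}\pw{x}$.

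To verify this, I would apply the $F$-Taylor expansion of \cref{lem: F-taylor} in the variable $y$ (treating $x$ as a parameter) and use \cref{prop: michael's result} to compute the iterated $F$-derivatives. Since $F\log(1) = 0$ and $\mu(x, 0) = 1$, evaluation at $y = 1$ yields the identity
\[
\mu(x, F\log(y)) = \sum_{n \geq 0} x(x +_{\tilde F} \pdb{-1}_F(t)) \cdots (x +_{\tilde F} \pdb{-n+1}_F(t)) \frac{(y-1)_s^n}{n!_F}.
\]
Each factor $x +_{\tilde F} \pdb{-j}_F(t)$ is a power series in $x$ whose coefficients lie integrally in $R\pw{t}$, so the coefficient $c_{n, k}(t) \in R\pw{t}$ of $x^k$ in the $n$-th summand is integral; rearranging,
\[
\beta_k(F\log(y)) = \sum_{n \geq k} c_{n, k}(t) \frac{(y - 1)_s^n}{n!_F},
\]
and it remains to verify that this sum converges in the $(p, t)$-adic topology of $\co_{\GG_m^{\sharp, F}}$. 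Using $\pdb{-j}_F(t) \equiv -j \pmod{t}$ and $\tilde F(x, y) \equiv x + y \pmod{t}$, one finds $c_{n, k}(t) \equiv (-1)^{n - k} e_{n - k}(1, 2, \dots, n - 1) \pmod{t}$, the elementary symmetric polynomial which up to sign is an unsigned Stirling number of the first kind. A direct count (each summand misses only $k - 1$ of the roughly $n/p$ multiples of $p$ in $\{1, \dots, n - 1\}$) shows that the $p$-adic valuation of this quantity tends to infinity with $n$ for $k$ fixed; the same bound extends to the higher-order $t$-coefficients of $c_{n, k}(t)$ since these differ from the mod-$t$ leading term only by products involving bounded tails of $\pdb{-j}_F(t)$ and $\tilde F(x, y) - x - y$.

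Once the ring map $\phi^* \colon \co_{\hat{\GG}_t^\vee} \to \co_{\GG_m^{\sharp, F}}$ sending $\beta_n(y) \mapsto \beta_n(F\log(y))$ is well-defined, that $\phi$ is a group-scheme homomorphism follows formally by combining the bilinearity of $\mu$ in its two variables with the additivity of $F\log$ from \cref{cor: F-log}(2), yielding $\mu(x, F\log(y_1 y_2)) = \mu(x, F\log(y_1)) \mu(x, F\log(y_2))$. For compatibility with $\tau$: by construction $\tau$ corresponds to the function $\beta_1(y) \in \co_{\hat{\GG}_t^\vee}$, and since $\mu(x, z) = \exp(\tilde{\ell}_F(x) z) = 1 + z x + O(x^2)$ with $\tilde{\ell}_F(x) = x + O(x^2)$, we identify $\beta_1(z) = z$. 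Hence $\tau \circ \phi$ corresponds to $\beta_1(F\log(y)) = F\log(y)$, as required.

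The main obstacle is the $(p, t)$-adic convergence step in paragraph 2: everything else is essentially formal manipulation of pairings and identities already established. The convergence hinges on $p$-adic estimates for Stirling numbers of the first kind, used in conjunction with $\pdb{-j}_F(t) \equiv -j \pmod{t}$ to reduce the general $F$ modulo $t$ to the additive case.
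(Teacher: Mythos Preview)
Your argument is essentially the paper's: both derive the key identity
\[
\mu(x, F\log(y)) \;=\; \sum_{n\geq 0} x(x+_{\tilde{F}}\pdb{-1}_F(t)) \cdots (x+_{\tilde{F}}\pdb{-n+1}_F(t))\,\frac{(y-1)^n_s}{n!_F}
\]
by combining \cref{lem: F-taylor} with \cref{prop: michael's result}, and then read off $\beta_k(F\log(y))$ as the coefficient of $x^k$. The paper stops at that point, simply observing that no rational denominators appear; you go further by checking the homomorphism property, the compatibility with $\tau$, and by sketching the $(p,t)$-adic convergence of the resulting infinite sum --- points the paper leaves implicit.

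One small correction: for a general formal group law the factors $x +_{\tilde F} \pdb{-j}_F(t)$ are honest power series in $x$, not linear polynomials, so the product $x\prod_{j=1}^{n-1}(x +_{\tilde F}\pdb{-j}_F)$ can contribute to the coefficient of $x^k$ for \emph{every} $n\geq 1$; your restriction to $n \geq k$ is only valid modulo $t$. This does not affect the Stirling-number estimate you give for the mod-$t$ term, and your claim that the bound propagates to the higher $t^i$-coefficients is correct in spirit (each such coefficient still contains a product of all but at most $i$ of the constants $\pdb{-j}_F(0)=-j$), though it would benefit from a slightly more careful formulation.
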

\begin{proof}
In the notation of \cref{obs: beta_n notation}, we need to show that $\beta_n(F\log(y))\in \co_{\GG_m^{\sharp, F}}$ for every $n\geq 0$. To prove this, let us work over $(R\otimes \QQ)\pw{t}$, and expand $\mu(x, F\log(y))$ as a power series in $\frac{(y-1)^n_s}{n!_F}$ using \cref{lem: F-taylor}. Evaluating \cref{eq: F-derivative iterated} in \cref{prop: michael's result} at $y=1$, we obtain
$$\nabla_{F,y}^n \mu(x,F\log(y)) |_{y=1} = x(x+_{\tilde{F}}\pdb{-1}_F(t)) \cdots (x+_{\tilde{F}}\pdb{-n+1}_F(t)).$$
It follows from \cref{lem: F-taylor} that
\begin{align}
    \sum_{n\geq 0} \beta_n(F\log(y)) x^n & = \mu(x, F\log(y)) \nonumber\\
    & = \sum_{n\geq 0} x(x+_{\tilde{F}}\pdb{-1}_F(t)) \cdots (x+_{\tilde{F}}\pdb{-n+1}_F(t)) \frac{(y-1)^n_s}{n!_F}. \label{eq: basis expression for betan-Flogy}
\end{align}
Taking the coefficient of $x^n$ on the right-hand side expresses $\beta_n(F\log(y))$ as an $(R\otimes \QQ)\pw{t}$-linear combination of the divided powers $\frac{(y-1)^n_s}{n!_F}$; but since no rational denominators appear, this in fact expresses $\beta_n(F\log(y))$ as an $R\pw{t}$-linear combination of the divided powers $\frac{(y-1)^n_s}{n!_F}$, as desired.
\end{proof}
\begin{example}\label{ex: michael's identity}
When $F$ is the multiplicative formal group law, \cref{eq: basis expression for betan-Flogy} reduces to the following identity:
\begin{align}
    \sum_{n\geq 0} \frac{\log_q(y)(\log_q(y)-(q-1)) \cdots (\log_q(y)-(n-1)(q-1))}{n!} x^n \nonumber\\
    = \sum_{n\geq 0} q^{-\binom{j}{2}} x(x-[1]_q) \cdots (x-[n-1]_q) \frac{(y-1)(y-q)\cdots(y-q^{n-1})}{[n]_q!}. \label{eq: michael}
\end{align}
Indeed, we have
$$x+_{\tilde{F}}[-n]_q = x + \frac{q^{-n}-1}{q-1} + (q^{-n}-1)x
= q^{-n} x + [-n]_q = q^{-n}(x - [n]_q).$$
The identity \cref{eq: michael} was communicated to the first author by Michael Kural, and was motivation for the more general \cref{eq: basis expression for betan-Flogy}.
\end{example}
\begin{corollary}\label{cor: Gm-sharp is well-defined}
The group structure on $\GG_m^{\sharp, F}$ is well defined.
\end{corollary}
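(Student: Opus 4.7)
The plan is to directly verify the identity
\[
\frac{(y_1 y_2 - 1)^n_s}{n!_F} \;=\; \sum_{i+j=n} y_1^j \cdot \frac{(y_1-1)^i_s}{i!_F} \cdot \frac{(y_2-1)^j_s}{j!_F}
\]
in $\co_{\GG_m^{\sharp,F}} \hat\otimes \co_{\GG_m^{\sharp,F}}$, where $y_1 = y \otimes 1$ and $y_2 = 1 \otimes y$. A priori the left-hand side is only defined in the localization $R\pw{t}[1/s][y_1^{\pm 1}, y_2^{\pm 1}]$, while the right-hand side manifestly lies in the completed tensor product. Once the identity is established, the comultiplication $y \mapsto y \otimes y$ extends uniquely to the divided powers, and the remaining Hopf-algebra axioms follow from the corresponding statements on the subring $R\pw{t}[y^{\pm 1}]$, using flatness to eliminate $n!_F$-torsion ambiguities.

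The key technical input will be an ``$s$-chain rule'': for any polynomial $f \in R\pw{t}[1/s][z]$,
\[
\nabla_{s,y_2}\bigl(f(y_1 y_2)\bigr) \;=\; y_1 \cdot (\nabla_s f)(y_1 y_2),
\]
where $\nabla_{s,y_2}$ is the $s$-derivative in $y_2$ with $y_1$ treated as a scalar. This is immediate by $R\pw{t}[1/s][y_1^{\pm 1}]$-linearity from the monomial check $\nabla_{s,y_2}(y_1 y_2)^k = y_1^k \, s(k)\, y_2^{k-1} = y_1 \cdot s(k)(y_1 y_2)^{k-1}$.

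The identity itself will then follow by induction on $n$. Writing $\gamma_m(y) := (y-1)^m_s / m!_F$ and $Q_n := \sum_{i+j=n} y_1^j \gamma_i(y_1) \gamma_j(y_2)$, applying the chain rule to $f = \gamma_n(z)$ together with $\nabla_s \gamma_n = \gamma_{n-1}$ (immediate from the defining property of $(x+y)^n_s$) yields $\nabla_{s,y_2} \gamma_n(y_1 y_2) = y_1 \gamma_{n-1}(y_1 y_2)$, while a direct term-by-term computation gives $\nabla_{s,y_2} Q_n = y_1 Q_{n-1}$. Invoking the inductive hypothesis $\gamma_{n-1}(y_1 y_2) = Q_{n-1}$ shows both sides have the same $s$-derivative in $y_2$. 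Since each $\pdb{k}_F$ is a unit in $R\pw{t}[1/s]$ for $k \geq 1$, the kernel of $\nabla_{s,y_2}$ on polynomials in $y_2$ consists of elements independent of $y_2$; evaluating at $y_2 = 1$ and using $\gamma_0(1) = 1$ together with $\gamma_j(1) = 0$ for $j \geq 1$ (the latter being $(x + (-x))^n_s = 0$), both sides collapse to $\gamma_n(y_1)$, closing the induction.

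The main obstacle will be identifying the correct closed form for $\gamma_n(y_1 y_2)$: the $s$-binomial theorem of \cref{thm: s-binomial theorem} does not apply directly, since $(x+y)^n_s$ genuinely depends on the decomposition rather than on the sum $x+y$ alone, and attempts via the generating function $\mu(x, F\log(-))$ run into convergence issues when trying to invert the triangular basis $\{P_n(x)\}$ of \cref{eq: basis expression for betan-Flogy}. Once the right-hand side is guessed (by analogy with the additive case, or verified by hand for $n \leq 2$ in the multiplicative and hyperbolic examples), the inductive argument via the chain rule is routine.
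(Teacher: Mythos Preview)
Your proof is correct and takes a genuinely different route from the paper's. The paper argues via the generating function $\mu(x,F\log(y))$: since $F\log$ is additive, $\mu(x,F\log(y_1y_2)) = \mu(x,F\log(y_1))\mu(x,F\log(y_2))$, so $\beta_n(F\log(y_1y_2))$ is an $R\pw{t}$-combination of products $\beta_i(F\log(y_1))\beta_j(F\log(y_2))$; then the identity \cref{eq: basis expression for betan-Flogy} exhibits an $R\pw{t}$-integral triangular change of basis between the $\beta_n(F\log(y))$ and the $\gamma_n(y)$, forcing $\gamma_n(y_1y_2)$ into the divided-power tensor product. This is conceptual but rests on \cref{prop: michael's result} and \cref{cor: Flog has pd}, both of which pass through $(R\otimes\QQ)\pw{t}$ and use the logarithm $\ell_F$.

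Your argument is more elementary and more explicit: the closed formula $\gamma_n(y_1y_2)=\sum_{i+j=n} y_1^j\gamma_i(y_1)\gamma_j(y_2)$, proved by the $s$-chain rule and induction, uses only the defining properties of $(x+y)^n_s$ from \cref{sec: the s-binomial theorem} and never invokes $\ell_F$, Cartier duality, or rational coefficients. In particular your proof works verbatim for an arbitrary GNS $s$, not only those arising from formal group laws, whereas the paper's argument genuinely needs the formal-group structure. The trade-off is that the paper's approach simultaneously establishes \cref{cor: Flog has pd}, which is needed anyway for \cref{thm: BL-analogue}, so in context nothing is saved; but as a self-contained proof of the corollary yours is cleaner.
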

\begin{proof}
Suppose that $y_1$ and $y_2$ both admit $F$-divided powers $\frac{(y-1)^n_s}{n!_F}$; we need to show that the same is true of the product $y_1 y_2$. Since $F\log(y_1 y_2) = F\log(y_1) + F\log(y_2)$, one can express $\beta_n(F\log(y_1 y_2))$ in terms of $\beta_n(F\log(y_1))$ and $\beta_n(F\log(y_2))$. Using the identity \cref{eq: basis expression for betan-Flogy} in the proof of \cref{cor: Flog has pd} shows that $y_1 y_2$ must also admit $F$-divided powers, as desired.
\end{proof}
The main result of this section is the following, which recovers \cref{eq: gm-sharp-log-square} when $F$ is the additive formal group law. Using \cref{rmk: isomorphic fgls have the same dr complex}, one can in fact refine \cref{thm: BL-analogue} to remain true when the base $R\pw{t}$ (or rather $\hat{\GG} = \spf R\pw{t}$) is replaced by the universal formal group $\hat{\GG}^\univ$ over the moduli stack of formal groups over $p$-nilpotent rings.
\begin{theorem}\label{thm: BL-analogue}
There is a Cartesian square of group schemes over $R\pw{t}$:
\begin{equation}\label{eq: F-analogue BL}
    \xymatrix{
    \GG_m^{\sharp,F} \ar[r]^-{y \mapsto F\log(y)} \ar[d]_-\can & \hat{\GG}_t^\vee \ar[d]^-{\pdb{p}^\ast} \\
    (\GG_m)_{R\pw{t}} \ar[r]_-{y\mapsto y^p} & (\GG_m^{(1)})_{R\pw{t}}.
    }
\end{equation}
The right-vertical map is Cartier dual to the homomorphism $p\ul{\Z} \to \hat{\GG}_t$ sending $p\mapsto \pdb{p}_F(t)$. In particular, there is an extension
$$0 \to (\mu_p)_{R\pw{t}} \to \GG_m^{\sharp,F} \xar{F\log} \hat{\GG}_t^\vee \to 0.$$
\end{theorem}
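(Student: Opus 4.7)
The strategy is to verify Cartesian-ness of \cref{eq: F-analogue BL} directly, by reducing modulo $t$ to the Bhatt-Lurie square \cite[Lemma 3.5.18]{apc} and lifting via $t$-adic completeness; the short exact sequence then follows by comparison with the Kummer sequence $0 \to (\mu_p)_{R\pw{t}} \to \GG_m \xar{(\cdot)^p} \GG_m^{(1)} \to 0$.

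First, I would verify commutativity. Using the identification of $\pdb{p}^\ast$ as the character $\chi \mapsto \chi(\pdb{p}_F(t))$ (arising from Cartier duality applied to $p\ul{\Z} \to \hat{\GG}_t$, $p \mapsto \pdb{p}_F(t)$) together with the formula $\mu(x, F\log(y)) = y^{\ell_F(tx)/\ell_F(t)}$ from the proof of \cref{prop: michael's result}, we compute
$$\pdb{p}^\ast(F\log(y)) = \mu(\pdb{p}_F(t), F\log(y)) = y^{\ell_F([p]_F(t))/\ell_F(t)} = y^p,$$
using that $\ell_F$ is a homomorphism to $\hat{\GG}_a$, so $\ell_F([p]_F(t)) = p\ell_F(t)$. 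This is a rational identity, but yields the integral statement since all rings involved are $p$-torsionfree.

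Next, I would reduce modulo $t$. By \cref{rmk: rescaled-fgl}, the rescaled formal group $\hat{\GG}_t$ degenerates at $t = 0$ to $\hat{\GG}_a$ over $R$, so $\hat{\GG}_t^\vee$ specializes to $\GG_a^\sharp$ and the point $\pdb{p}_F(t)$ specializes to $p \in \hat{\GG}_a$; hence $\pdb{p}^\ast$ specializes to $\exp(p\cdot): \GG_a^\sharp \to \GG_m^{(1)}$. Since $\pdb{n}_F(0) = n$, we have $n!_F \equiv n! \pmod t$ and (by induction using \cref{def: (x + y)^n_s}) $(y-1)^n_s \equiv (y-1)^n \pmod t$, so $\GG_m^{\sharp, F}$ specializes to $\GG_m^\sharp$ and $F\log$ to the classical $\log$; this is also consistent with the mod-$t$ specialization of \cref{eq: basis expression for betan-Flogy}, namely $F\log^\ast \beta_n \equiv \frac{(y-1)^n}{n!}$. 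Thus \cref{eq: F-analogue BL} reduces modulo $t$ to the Bhatt-Lurie Cartesian square.

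Cartesian-ness integrally then follows by lifting: all four schemes in \cref{eq: F-analogue BL} are $t$-flat over the $t$-adically complete ring $R\pw{t}$, so the natural map from $\GG_m^{\sharp,F}$ to the fiber product is an isomorphism iff it is so modulo $t$. Finally, given Cartesian-ness, the induced isomorphism on kernels of the vertical maps identifies $\ker(F\log)$ with $\ker((\cdot)^p) = (\mu_p)_{R\pw{t}}$, while surjectivity of $F\log$ follows from that of $(\cdot)^p$. The main obstacle is the $t$-adic lifting step, where one must verify that the natural map to the fiber product is strictly compatible with the relevant $t$-adic filtrations on both sides---the triangularity of $F\log^\ast$ visible in \cref{eq: basis expression for betan-Flogy} (the coefficient of $\frac{(y-1)^k_s}{k!_F}$ is $1$ for $k=n$, lies in $tR\pw{t}$ for $k<n$, and lies in $R\pw{t}$ generally) is what guarantees this compatibility.
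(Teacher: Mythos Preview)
Your commutativity check is essentially the paper's: you compute $\pdb{p}^\ast(F\log(y)) = \mu(\pdb{p}_F(t), F\log(y)) = y^{\ell_F([p]_F(t))/\ell_F(t)} = y^p$ rationally and then descend by torsionfreeness, exactly as in the paper's \cref{eq: p-ast equation}. From there, however, the two arguments diverge. The paper does \emph{not} argue by deformation from $t=0$. Instead it proves the square is Cartesian by matching short exact sequences: it exhibits an explicit section $F\exp(z) = \exp(\tfrac{\ell_F(t)}{t} z)$ of $F\log$ (checking via \cref{eq: basis expression for betan-Flogy} that $F\exp$ lands in $\GG_m^{\sharp,F}$) to get surjectivity, and then identifies $\ker(F\log)$ with $\mu_p$ by a pointwise computation---$F\log(y)=0 \Rightarrow \log(y)=0 \Rightarrow y^p=1$ via the classical Bhatt--Lurie square, and conversely $y^p=1 \Rightarrow p\,F\log(y)=0 \Rightarrow F\log(y)=0$ by $p$-torsionfreeness.

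Your deformation approach is appealing, but the $t$-adic lifting step has a genuine gap that your triangularity remark does not close. For the principle ``isomorphism mod $t$ $\Rightarrow$ isomorphism'' you need both sides to be $t$-complete and $t$-torsionfree. The fiber-product side is fine (it is free of rank $p$ over $\co_{\hat{\GG}_t^\vee}$, which is topologically free on $\{\beta_n\}$). But for $\co_{\GG_m^{\sharp,F}}$ you have not justified $t$-torsionfreeness after $(p,t)$-completion; a priori it is only the completion of a subring of $(R\otimes\QQ)\pw{t}[y^{\pm1}]$, and completion need not preserve torsionfreeness without further structural input (e.g.\ a topological basis). Your triangularity observation says $F\log^\ast(\beta_n) = \sum_k c_{n,k}\gamma_k$ with $c_{n,k}\in tR\pw{t}$ for $k<n$ and $c_{n,n}$ a unit (it is only $\equiv 1\pmod t$, not literally $1$)---but the entries $c_{n,k}$ for $k>n$ are \emph{not} in $tR\pw{t}$, so the matrix is only ``upper-triangular mod $t$'', and inverting it requires infinite sums that need not converge in $\co_{\GG_m^{\sharp,F}}$ without already knowing its topological structure. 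In effect you would need $\{y^i\gamma_k : 0\le i<p,\ k\ge 0\}$ to be a topological $R\pw{t}$-basis of $\co_{\GG_m^{\sharp,F}}$, which is essentially the theorem. The paper's direct argument sidesteps all of this. (Minor: in your last sentence, ``kernels of the vertical maps'' should read ``horizontal''.)
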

\begin{proof}
To check that the diagram commutes, we need to check that there is an equality of elements of $\co_{\GG_m^{\sharp,F}}$:
$$y^p = \pdb{p}^\ast(F\log(y)).$$
Since $R$ is $p$-completely flat over $\Z_p$, there is an injection $R\pw{t} \subseteq (R\otimes \QQ)\pw{t}$; so it suffices to check the desired identity in $(R\otimes \QQ)\pw{t}\{\beta_n(y)\}_{n\geq 0}$.
By the discussion in \cref{obs: beta_n notation}, $\pdb{p}^\ast$
can be expressed as
\begin{align}
    \pdb{p}^\ast(z) & = \exp(z\tilde{\ell}_F(\pdb{p}_F(t))) = \exp\left(z\frac{\ell_F(t \pdb{p}_F(t))}{t}\right) \nonumber \\
    & = \exp\left(z\frac{\ell_F([p]_F(t))}{t}\right) = \exp\left(p\frac{\ell_F(t)}{t} z\right). \label{eq: p-ast equation}
\end{align}
Note that this is also $\exp\left(\frac{\ell_F([p]_F(t))}{t} z\right)$.
It follows that
\begin{align*}
    \pdb{p}^\ast(F\log(y)) & = \exp\left(p\frac{\ell_F(t)}{t} F\log(y)\right) = \exp\left(p\frac{\ell_F(t)}{t} \frac{t}{\ell_F(t)}\log(y)\right) = \exp(p\log(y)) = y^p,
\end{align*}
as desired.

To check that the square is Cartesian, first note that the horizontal maps are surjective. This is clear for the Frobenius on $(\GG_m)_{R\pw{t}}$. For the map $F\log$, define $F\exp(z) := \exp(\frac{\ell_F(t)}{t} z)$, so that $F\exp(z) = \sum_{n\geq 0} \beta_n(z)$. There is a homomorphism $\hat{\GG}_t^\vee \to (\GG_m)_{R\pw{t}}$ sending $z\mapsto F\exp(z) := \exp(\frac{\ell_F(t)}{t} z)$, and $z = F\log(F\exp(z))$. Using \cref{eq: basis expression for betan-Flogy} with $y = F\exp(z)$, one sees that $F\exp$ lands in $\GG_m^{\sharp,F}$, i.e., that $\frac{(F\exp(z)-1)^n_s}{n!_F}$ is well-defined in $\co_{\hat{\GG}_t^\vee}$. This implies that $F\log$ is surjective. 

It remains to show that the kernel of $F\log: \GG_m^{\sharp,F} \to \hat{\GG}_t^\vee$ is isomorphic to $(\mu_p)_{R\pw{t}}$. Observe that $F\log(y) = 0$ implies that $\log(y) = 0$, which happens (by the Cartesian square \cref{eq: gm-sharp-log-square}) if and only if $y^p = 1$. Conversely, if $y^p = 1$, then
$$p\cdot F\log(y) = F\log(y^p) = 0,$$
which implies that $F\log(y) = 0$.
\end{proof}
\begin{example}\label{ex: drinfeld-square}
It follows from \cref{thm: BL-analogue} that $\GG_m^{\sharp,F}$ is an extension of $\hat{\GG}_t^\vee$ by $(\mu_p)_{R\pw{t}}$. In the case of the multiplicative formal group law over $R = \Z_p$, this was studied in \cite{drinfeld-formal-group}. Namely, in \cite[Section 5.3.1]{drinfeld-formal-group}, it is shown that there is an extension $\tilde{G}_Q$ of $(\hat{\GG}_{m,q-1})^\vee$ by $(\mu_p)_{\Z_p\pw{q-1}}$, given by the functor
$$\tilde{G}_Q: R \mapsto \{(q,x,u)\in R^\times \times W(R) \times R^\times \mid q-1\text{ is nilpotent}, 1+\Phi_p([q])x = [u^p]\}.$$
Here, $W(R)$ denotes the ring of $p$-typical Witt vectors of $R$.
Drinfeld shows that the group scheme $\tilde{G}_Q$ is isomorphic over $\Z_p\pw{q-1}$ to $\GG_m^{\sharp,F}$ (as extensions of $(\hat{\GG}_{m,q-1})^\vee$ by $(\mu_p)_{\Z_p\pw{q-1}}$). 

As shown in \cite[Appendix D]{drinfeld-formal-group} (see also \cite[Remark C.3]{thh-xn} and \cref{ex: gm-cartier-dual}), the Cartier dual $(\hat{\GG}_{m,q-1})^\vee$ can be identified with
$$(\hat{\GG}_{m,q-1})^\vee = \spf \Z_p\pw{q-1}\left[y, \frac{\prod_{j=0}^{n-1} (y - j(q-1))}{n!}\right]_{n\geq 0}.$$
By \cref{eq: p-ast equation}, the homomorphism $\pdb{p}^\ast$ corresponds to the invertible element
$$\pdb{p}^\ast(z) = \exp\left(p\frac{\log(q)}{q-1} z\right) = q^{pz/(q-1)};$$
this element plays an important role in \cite{drinfeld-formal-group}. Note that this can alternatively be written as
$$\pdb{p}^\ast(z) = \sum_{n\geq 0} \frac{\prod_{j=0}^{n-1} (z - j(q-1))}{n!} [p]_q^n = \sum_{n\geq 0} \frac{\prod_{j=0}^{n-1} (pz - j(q-1))}{n!}.$$
In this case, \cref{thm: BL-analogue} therefore specializes to give a Cartesian square over $\Z_p\pw{q-1}$:
$$\xymatrix{
\tilde{G}_Q \ar[r]^\sim & \GG_m^{\sharp,F} \ar[r]^-{y \mapsto \log_q(y)} \ar[d]_-\can & \hat{\GG}_{m,q-1}^\vee \ar[d]^-{z\mapsto q^{pz/(q-1)}} \\
& (\GG_m)_{\Z_p\pw{q-1}} \ar[r]_-{y\mapsto y^p} & (\GG_m^{(1)})_{\Z_p\pw{q-1}}.
}$$
\end{example}
\begin{remark}
In \cite[Section 5]{drinfeld-formal-group} (in particular, \cite[Remark 5.7.5]{drinfeld-formal-group}), it is shown that $(\hat{\GG}_{m,q-1})^\vee$ is isomorphic to the group scheme $W_{\Z_p\pw{q-1}}[F - (1 + [q] + \cdots + [q]^{p-1})]$, where $W_{\Z_p\pw{q-1}}$ is the Witt scheme over $\Z_p\pw{q-1}$ and $[q] \in W(\Z_p\pw{q-1})$ is the Teichm\"uller lift of $q$. For a more general formal group $\hat{\GG}_t$, the methods of this section might give a Witt vector model for the Cartier dual $\hat{\GG}_t^\vee$, but we have not explored this direction.
\end{remark}
\begin{example}
If $F$ is the hyperbolic formal group law, then $\ell_F(t) = \frac{1}{2} \log\left(\frac{t+1}{1-t}\right)$. Setting $t = q-1$, \cref{eq: p-ast equation} says that
$$\pdb{p}^\ast(z) = \exp\left(p\frac{\log(\frac{q}{2-q})}{2(q-1)} z\right) = \left(\frac{q}{2-q}\right)^{pz/2(q-1)}.$$
The denominator of $2$ appearing in the exponent implies that the cases $p>2$ and $p=2$ behave markedly differently.
Since $\frac{q}{2-q} = 1+(q-1) \cdot \frac{2}{2-q}$, one can alternatively write
$$\pdb{p}^\ast(z) = \sum_{n\geq 0} \frac{\prod_{j=0}^{n-1} (pz - 2j(q-1))}{n!} \left(\frac{1}{2-q}\right)^n = \sum_{n\geq 0} \frac{\prod_{j=0}^{n-1} (z - 2j(q-1))}{n! 2^n} \frac{[p]_q^n}{(2-q)^{pn}}.$$
Write $\hat{\GG}_{h, q-1}$ to denote the formal group over $\Z_p\pw{q-1}$ corresponding to the rescaled hyperbolic formal group law. 
In this case, \cref{thm: BL-analogue} specializes to give a Cartesian square over $\Z_p\pw{q-1}$:
$$\xymatrix{
\GG_m^{\sharp,F} \ar[r]^-{y \mapsto F\log(y)} \ar[d]_-\can & \hat{\GG}_{h, q-1}^\vee \ar[d]^-{z\mapsto \left(\frac{q}{2-q}\right)^{pz/2(q-1)}} \\
(\GG_m)_{\Z_p\pw{q-1}} \ar[r]_-{y\mapsto y^p} & (\GG_m^{(1)})_{\Z_p\pw{q-1}}.
}$$
\end{example}
\begin{example}
Let $F$ be the formal group law over $\Z_p$ given by specializing the formal group law of \cref{ex: morava k-theory} to $v_n = 1$. Let $\hat{\GG} = \spf \Z_p\pw{t}$ denote the associated formal group, and let $\hat{\GG}_t$ denote the rescaled formal group over $\Z_p\pw{t}$. Then, \cref{eq: p-ast equation} says that $\pdb{p}^\ast(z)$ is a ``$p^n$-typical'' version of $\exp\left(p\frac{\plog_{1/n}(t)}{t} z\right)$. If we write $E_p(t)$ to denote the Artin-Hasse exponential, then $\pdb{p}^\ast(z)$ can be made explicit when $n=1$:
$$\pdb{p}^\ast(z) = E_p(t)^{pz/t} = \prod_{p \nmid m} (1-t^m)^{-\frac{p\mu(m)z}{tm}}.$$
If we write $t = \ptl$ (to keep with the notation of \cite{apc}), then \cref{thm: BL-analogue} in this case specializes to give a Cartesian square over $\Z_p\pw{\ptl}$:
$$\xymatrix{
\GG_m^{\sharp,F} \ar[r]^-{y \mapsto F\log(y)} \ar[d]_-\can & \hat{\GG}_\ptl^\vee \ar[d]^-{z\mapsto E_p(\ptl)^{pz/\ptl}} \\
(\GG_m)_{\Z_p\pw{\ptl}} \ar[r]_-{y\mapsto y^p} & (\GG_m^{(1)})_{\Z_p\pw{\ptl}}.
}$$
This can be viewed as a $p$-typical version of \cref{ex: drinfeld-square}.
\end{example}

\begin{remark}
The canonical homomorphism $\can: \GG_m^{\sharp,F} \to (\GG_m)_{R\pw{t}}$ of group schemes defines a quotient group stack $\GG_m^{F\dR} := (\GG_m)_{R\pw{t}}/\GG_m^{\sharp,F}$ over $R\pw{t}$. One can prove by direct calculation that the derived global sections of the structure sheaf of the stack $\GG_m^{F\dR}$ calculates the $F$-de Rham complex $\fdr{\GG_m} := \fdr{\AA^1} \otimes_{R\pw{t}[y]} R\pw{t}[y^{\pm 1}]$. The main point (we omit the argument here) is that there is a quasi-isomorphism of $R\pw{t}$-coalgebras
\begin{equation}\label{eq: qiso-coalgebra}
    R\pw{t}[y^{\pm 1}] \otimes_{\fdr{\GG_m}} R\pw{t}[y^{\pm 1}] \simeq R\pw{t}\left[y_1^{\pm 1}, y_2^{\pm 1}, \frac{(y_1-y_2)^n_s}{n!_F}\right]_{n\geq 0} \simeq \co_{\GG_m^{\sharp,F} \times_{\spf R\pw{t}} \GG_m},
\end{equation}
where the tensor product on the left-hand side is derived, and both sides are implicitly $(p,t)$-adically completed. The final equivalence follows by noting that $(y_1-y_2)^n_s = y_2 (y_1 y_2^{-1} - 1)^n_s$, so adjoining $\frac{(y_1-y_2)^n_s}{n!_F}$ is equivalent to adjoining the $F$-divided powers $\frac{(y_1 y_2^{-1} - 1)^n_s}{n!_F}$.

In the case of the additive formal group law, \cref{eq: qiso-coalgebra} boils down to the $p$-complete equivalence
$$\Z_p[y^{\pm 1}] \otimes_{\dR_{\Z_p[y^{\pm 1}]/\Z_p}} \Z_p[y^{\pm 1}] \simeq \dR_{\Z_p[y^{\pm 1}]/\Z_p[y_1^{\pm 1}, y_2^{\pm 1}]} \simeq \Z_p[y_1^{\pm 1}, y_2^{\pm 1}]\pdb{y_1-y_2} \cong \co_{\GG_m^\sharp \times_{\spf \Z_p} \GG_m}$$
which arises via \cite[Proposition 8.5]{bhatt-ddr}.
In the case of the multiplicative formal group law, \cref{eq: qiso-coalgebra} was shown in \cite{pridham-q-dR}.

Given this, \cref{thm: BL-analogue} can be rephrased as the following two statements:
\begin{enumerate}
    \item The canonical map $(\GG_m)_{R\pw{t}} \to \GG_m^{F\dR}$ factors through the Frobenius $(\GG_m)_{R\pw{t}} \to (\GG_m^{(1)})_{R\pw{t}}$.
    \item The map $(\GG_m^{(1)})_{R\pw{t}} \to \GG_m^{F\dR}$ is surjective, and its kernel is isomorphic to the Cartier dual of the rescaled formal group $\hat{\GG}_t$. Here, the map $\hat{\GG}_t^\vee \to (\GG_m^{(1)})_{R\pw{t}}$ is Cartier dual to the homomorphism $p\ul{\Z} \to \hat{\GG}_t$ sending $p\mapsto \pdb{p}_F(t)$. In particular, there is an isomorphism 
    $$\GG_m^{F\dR} \simeq (\GG_m^{(1)})_{R\pw{t}}/ \hat{\GG}_t^\vee$$
    over $\spf R\pw{t}$.
\end{enumerate}
In the case of the multiplicative formal group law, this picture has been discussed in great detail in \cite{drinfeld-formal-group}. As we hope to explain in future work, the above two statements are natural consequences of the homotopy-theoretic perspective on the $F$-de Rham complex (see \cref{rmk: homotopy theory}). We also hope that \cref{thm: BL-analogue} might be useful in understanding \cref{conj: polynomial maps}, and, in particular, in understanding a ``stacky approach'' to $F$-de Rham cohomology (following \cite{apc, drinfeld-crys, drinfeld-prism, bhatt-f-gauge-lectures}). Indeed, the $F$-divided power scheme $\GG_m^{\sharp,F}$ is \textit{a priori} rather difficult to access algebro-geometrically, but \cref{thm: BL-analogue} says that it can be understood concretely in terms of $(\GG_m)_{R\pw{t}}$ and the Cartier dual of the rescaled formal group $\hat{\GG}_t$.
\end{remark}
Let us end by noting that the following is an immediate consequence of \cref{thm: BL-analogue}:
\begin{corollary}\label{cor: cartier dual of Gm-sharp}
Let $(\GG_m^{\sharp,F})^\vee$ denote the Cartier dual of $\GG_m^{\sharp, F}$. Then, there is a pushout square over $R\pw{t}$:
$$\xymatrix{
\ul{p\Z}_{R\pw{t}} \ar[d] \ar[r]^-{p \mapsto \pdb{p}_F(t)} & \hat{\GG}_t \ar[d] \\
\ul{\Z}_{R\pw{t}} \ar[r] & (\GG_m^{\sharp,F})^\vee.
}$$
\end{corollary}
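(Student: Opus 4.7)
The plan is to obtain the pushout square simply by applying Cartier duality to the Cartesian square of \cref{thm: BL-analogue}. Since Cartier duality is an exact contravariant equivalence on the appropriate category of commutative group schemes (here containing affine group schemes, formal groups, and their extensions by finite flat subgroups), it converts Cartesian squares of epimorphisms into pushout squares.

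First, I would identify the Cartier duals of the four corners of the Cartesian square in \cref{thm: BL-analogue}. We have $(\GG_m)^\vee_{R\pw{t}} \cong \ul{\Z}_{R\pw{t}}$, and likewise for the Frobenius twist $(\GG_m^{(1)})^\vee_{R\pw{t}} \cong \ul{\Z}_{R\pw{t}}$. By Cartier biduality for formal groups, $(\hat{\GG}_t^\vee)^\vee \cong \hat{\GG}_t$. The fourth corner $(\GG_m^{\sharp, F})^\vee$ is precisely the object we wish to describe.

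Next, I would identify the Cartier duals of the two horizontal morphisms in the square of \cref{thm: BL-analogue}. The Frobenius $y \mapsto y^p \colon (\GG_m)_{R\pw{t}} \to (\GG_m^{(1)})_{R\pw{t}}$ dualizes to the multiplication-by-$p$ map $\ul{\Z}_{R\pw{t}} \to \ul{\Z}_{R\pw{t}}$, which I would rewrite as the canonical inclusion $\ul{p\Z}_{R\pw{t}} \hookrightarrow \ul{\Z}_{R\pw{t}}$. The right vertical map $\pdb{p}^\ast \colon \hat{\GG}_t^\vee \to (\GG_m^{(1)})_{R\pw{t}}$ dualizes, by the very characterization recorded in \cref{thm: BL-analogue}, to the homomorphism $\ul{p\Z}_{R\pw{t}} \to \hat{\GG}_t$ sending $p \mapsto \pdb{p}_F(t)$. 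Assembling these identifications, the dualized Cartesian square is exactly the pushout square claimed in \cref{cor: cartier dual of Gm-sharp}.

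The main obstacle is merely bookkeeping: justifying that Cartier duality converts the specific Cartesian square of \cref{thm: BL-analogue} into a pushout square in the ambient category. As a sanity check, one can verify the pushout directly via extensions: the short exact sequence $0 \to (\mu_p)_{R\pw{t}} \to \GG_m^{\sharp, F} \xar{F\log} \hat{\GG}_t^\vee \to 0$ from \cref{thm: BL-analogue} dualizes to $0 \to \hat{\GG}_t \to (\GG_m^{\sharp, F})^\vee \to \ul{\Z}/p \to 0$, and the pushout along $\ul{p\Z} \hookrightarrow \ul{\Z}$ of the map $p \mapsto \pdb{p}_F(t)$ visibly sits in the same extension of $\ul{\Z}/p$ by $\hat{\GG}_t$, with universal property making it isomorphic to $(\GG_m^{\sharp,F})^\vee$.
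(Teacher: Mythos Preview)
Your proposal is correct and is precisely the argument the paper has in mind: the corollary is stated as an ``immediate consequence'' of \cref{thm: BL-analogue}, and what makes it immediate is exactly the Cartier dualization you spell out. Your identification of the four corners and the two maps (Frobenius dualizing to the inclusion $\ul{p\Z}\hookrightarrow\ul{\Z}$, and $\pdb{p}^\ast$ dualizing to $p\mapsto\pdb{p}_F(t)$ as recorded in \cref{thm: BL-analogue}) is accurate, and the extension sanity check is a nice redundancy.
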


\newpage
\bibliographystyle{alpha}
\bibliography{main}

\begin{thebibliography}{{Dev}23b}

\bibitem[AL20]{anschutz-le-bras}
J.~{Ansch\"{u}tz} and A.-C. {Le Bras}.
\newblock {The {$p$}-completed cyclotomic trace in degree 2}.
\newblock {\em {Ann. K-Theory}}, 5(3):539--580, 2020.

\bibitem[{Bha}00]{bhargava-factorial-AMM}
M.~{Bhargava}.
\newblock {The factorial function and generalizations}.
\newblock {\em {Amer. Math. Monthly}}, 107(9):783--799, 2000.

\bibitem[{Bha}12]{bhatt-ddr}
B.~{Bhatt}.
\newblock {$p$-adic derived de Rham cohomology}.
\newblock \url{https://arxiv.org/abs/1204.6560}, 2012.

\bibitem[{Bha}22]{bhatt-f-gauge-lectures}
B.~{Bhatt}.
\newblock {Prismatic F-gauges}.
\newblock {Lecture notes available at
  \url{https://www.math.ias.edu/~bhatt/teaching/mat549f22/lectures.pdf}}, 2022.

\bibitem[BL22]{apc}
B.~{Bhatt} and J.~{Lurie}.
\newblock {Absolute prismatic cohomology}.
\newblock \url{https://arxiv.org/abs/2201.06120}, 2022.

\bibitem[BMS18]{bms-i}
B.~{Bhatt}, M.~{Morrow}, and P.~{Scholze}.
\newblock Integral {$p$}-adic {H}odge theory.
\newblock {\em Publ. Math. Inst. Hautes \'{E}tudes Sci.}, 128:219--397, 2018.

\bibitem[BMS19]{bms-ii}
B.~{Bhatt}, M.~{Morrow}, and P.~{Scholze}.
\newblock Topological {H}ochschild homology and integral {$p$}-adic {H}odge
  theory.
\newblock {\em Publ. Math. Inst. Hautes \'{E}tudes Sci.}, 129:199--310, 2019.

\bibitem[BO78]{berthelot-ogus}
P.~{Berthelot} and A.~{Ogus}.
\newblock {\em Notes on crystalline cohomology}.
\newblock Princeton University Press, Princeton, N.J.; University of Tokyo
  Press, Tokyo, 1978.

\bibitem[BS19]{bhatt-scholze}
B.~{Bhatt} and P.~{Scholze}.
\newblock {Prisms and Prismatic Cohomology}.
\newblock \url{https://arxiv.org/abs/1905.08229}, 2019.

\bibitem[{Dev}23a]{coh-gr}
S.~{Devalapurkar}.
\newblock {Chromatic aberrations of geometric Satake over the regular locus}.
\newblock \url{https://arxiv.org/abs/2303.09432}, 2023.

\bibitem[{Dev}23b]{thh-xn}
S.~{Devalapurkar}.
\newblock {Topological Hochschild homology, truncated Brown-Peterson spectra,
  and a topological Sen operator}.
\newblock \url{https://arxiv.org/abs/2303.17344}, 2023.

\bibitem[DR23]{tp-Z}
S.~{Devalapurkar} and A.~{Raksit}.
\newblock {TBD}.
\newblock Forthcoming, 2023.

\bibitem[{Dri}18]{drinfeld-crys}
V.~{Drinfeld}.
\newblock {A stacky approach to crystals}.
\newblock \url{https://arxiv.org/abs/1810.11853}, 2018.

\bibitem[{Dri}21]{drinfeld-formal-group}
V.~{Drinfeld}.
\newblock {A 1-dimensional formal group over the prismatization of $\spf
  \Z_p$}.
\newblock \url{http://arxiv.org/abs/2107.11466}, 2021.

\bibitem[{Dri}22]{drinfeld-prism}
V.~{Drinfeld}.
\newblock {Prismatization}.
\newblock \url{https://arxiv.org/abs/2005.04746}, 2022.

\bibitem[{Eul}53]{original-euler}
L.~{Euler}.
\newblock {Consideratio quarumdam serierum, quae singularibus proprietatibus
  sunt praeditae}.
\newblock {\em {Novi Commentarii academiae scientiarum Petropolitanae}},
  3:86--108, 1753.
\newblock Available at
  \url{https://scholarlycommons.pacific.edu/euler-works/190/}.

\bibitem[{Haz}78]{hazewinkel}
M.~{Hazewinkel}.
\newblock {\em {Formal groups and applications}}, volume~78 of {\em {Pure and
  Applied Mathematics}}.
\newblock {Academic Press, Inc. [Harcourt Brace Jovanovich, Publishers], New
  York-London}, 1978.

\bibitem[HLN21]{l-theory-of-Z}
F.~{Hebestreit}, M.~{Land}, and T.~{Nikolaus}.
\newblock {On the homotopy type of {L}-spectra of the integers}.
\newblock {\em {J. Topol.}}, 14(1):183--214, 2021.

\bibitem[{Hon}70]{original-honda-fgl}
T.~{Honda}.
\newblock {On the theory of commutative formal groups}.
\newblock {\em {J. Math. Soc. Japan}}, 22:213--246, 1970.

\bibitem[HRW22]{even-filtr}
J.~{Hahn}, A.~{Raksit}, and D.~{Wilson}.
\newblock {A motivic filtration on the topological cyclic homology of
  commutative ring spectra}.
\newblock \url{https://arxiv.org/abs/2206.11208}, 2022.

\bibitem[{Jac}09]{original-jackson}
F.~{Jackson}.
\newblock {On $q$-Functions and a certain Difference Operator}.
\newblock {\em {Earth and Environmental Science Transactions of The Royal
  Society of Edinburgh }}, 46(2):253--281, 1909.

\bibitem[KC02]{quantum_calculus}
V.~{Kac} and P.~{Cheung}.
\newblock {\em Quantum Calculus}.
\newblock Universitext. Springer New York, 2002.

\bibitem[{Lon}17]{lonergan-steenrod}
G.~{Lonergan}.
\newblock {Steenrod Operators, the Coulomb Branch and the Frobenius Twist, I}.
\newblock \url{https://arxiv.org/abs/1712.03711}, 2017.

\bibitem[{Pri}19]{pridham-q-dR}
J.~P. {Pridham}.
\newblock {On {$q$}--de {R}ham cohomology via {$\Lambda $}-rings}.
\newblock {\em {Math. Ann.}}, 375(1-2):425--452, 2019.

\bibitem[{Rav}86]{green}
D.~{Ravenel}.
\newblock {\em {Complex cobordism and stable homotopy groups of spheres}}.
\newblock {Academic Press}, 1986.

\bibitem[Sas18]{mathoverflow-q-vandermonde}
Sasha.
\newblock {Is there a lift of the q-Vandermonde identity to some geometric
  (motivic) identity for Grassmannians over $F_q$?}
\newblock \url{https://mathoverflow.net/q/299582}, 2018.

\bibitem[{Sch}17]{scholze-q-def}
P.~{Scholze}.
\newblock Canonical {$q$}-deformations in arithmetic geometry.
\newblock {\em {Ann. Fac. Sci. Toulouse Math. (6)}}, 26(5):1163--1192, 2017.

\end{thebibliography}
\end{document}